\newcommand{\MZ}{\mathbb{Z}}
\newcommand{\BR}{\mathbb{R}}
\newcommand{\SL}{\sum\limits}
\newcommand{\al}{\alpha}
\newcommand{\be}{\beta}
\newcommand{\de}{\delta}
\newcommand{\La}{\Lambda}
\newcommand{\CF}{\mathcal F}
\newcommand{\MP}{\mathbf P}
\newcommand{\MQ}{\mathbb Q}
\newcommand{\Oa}{\Omega}
\newcommand{\oa}{\omega}
\newcommand{\si}{\sigma}
\renewcommand{\phi}{\varphi}
\newcommand{\eps}{\varepsilon}
\newcommand{\la}{\lambda}
\newcommand{\Ra}{\Rightarrow}
\newcommand{\ol}{\overline}
\newcommand{\Tau}{\mathcal T}
\renewcommand{\comment}[1]{}
\newcommand{\md}{\mathrm{d}}
\newcommand{\mP}{\mathbf{p}}
\DeclareMathOperator{\Var}{Var}
\DeclareMathOperator{\diag}{diag}
\DeclareMathOperator{\mes}{mes}
\DeclareMathOperator{\Exp}{Exp}
\begin{document}

\theoremstyle{plain}
\newtheorem{thm}{Theorem}[section]
\newtheorem*{thmnonumber}{Theorem}
\newtheorem{lemma}[thm]{Lemma}
\newtheorem{prop}[thm]{Proposition}
\newtheorem{cor}[thm]{Corollary}
\newtheorem{open}[thm]{Open Problem}
\newtheorem{conj}[thm]{Conjecture}

\theoremstyle{definition}
\newtheorem{defn}{Definition}
\newtheorem{asmp}{Assumption}
\newtheorem{notn}{Notation}
\newtheorem{prb}{Problem}

\theoremstyle{remark}
\newtheorem{rmk}{Remark}
\newtheorem{exm}{Example}
\newtheorem{clm}{Claim}

\author{Andrey Sarantsev}

\title[Two-Sided Infinite Systems]{Two-Sided Infinite Systems\\ of Competing Brownian Particles} 

\address{Department of Statistics and Applied Probability, University of California, Santa Barbara}

\email{sarantsev@pstat.ucsb.edu}

\date{June 13, 2017. Version 29}

\keywords{Competing Brownian particles, gap process, weak convergence, stationary distribution, named particles, ranked particles, stochastic domination, interacting particle systems}

\subjclass[2010]{Primary 60J60, secondary 60J55, 60J65, 60H10, 60K35}

\begin{abstract} Two-sided infinite systems of Brownian particles with rank-dependent dynamics, indexed by all integers, exhibit  different properties from their one-sided infinite counterparts, indexed by positive integers, and from finite systems. Consider the gap process, which is formed by spacings between adjacent particles. In stark contrast with finite and one-sided infinite systems, two-sided infinite systems can have one- or two-parameter family of stationary gap distributions, or the gap process weakly converging to zero as time goes to infinity. 
\end{abstract}

\maketitle

\section{Introduction}

\thispagestyle{empty}

\subsection{Definitions}
The article is devoted to systems of Brownian particles on the real line:
$$
X = (X_n)_{n \in \MZ},\ X_n = (X_n(t),\, t \ge 0),\ n \in \MZ,
$$
which evolve according to the following rule: The dynamics of each particle (more precisely, its drift and diffusion coefficients) depend on its current rank relative to other particles. These systems are called {\it two-sided infinite systems of competing Brownian particles}. Let us define them formally. 

\smallskip

A vector $x \in (x_n)_{n \in \MZ} \in \BR^{\MZ}$ is called {\it rankable} if there exists a bijection $\mP : \MZ \to \MZ$ such that
\begin{equation}
\label{eq:ranking}
x_{\mP(k)} \le x_{\mP(l)}\ \ \mbox{for}\ \ k \le l.
\end{equation}

The following  counterexample shows that not all sequences in $\BR^{\MZ}$ are rankable:
$$
x = (x_n)_{n \in \MZ},\ \ 
x_n = 
\begin{cases}
n^{-1},\ n \ne 0;\\
0,\ n = 0.
\end{cases}
$$

However, if $x \in \BR^{\MZ}$ is rankable, then we can find a bijection $\mP : \MZ \to \MZ$ which satisfies~\eqref{eq:ranking} and resolves ties in lexicographic order: if $x_{\mP(k)} = x_{\mP(l)}$, but $k < l$, then $\mP(k) < \mP(l)$. This is called a {\it ranking permutation} for the vector $x$. Such a permutation is unique up to a shift: For any two ranking permutations $\mP$ and $\mP'$, there exists an $m \in \MZ$ such that $\mP(k) = \mP'(k+m)$ for all $k \in \MZ$. Suppose we fixed a ranking permutation $\mP$ for the vector $x \in \BR^{\MZ}$. For each $i \in \MZ$, the integer $k = \mP^{-1}(i)$ is called the {\it rank} of the component $x_i$. 

\smallskip

We operate in the standard setting: a filtered probability space $(\Oa, \CF, (\CF_t)_{t \ge 0}, \MP)$ with the filtration satisfying the usual conditions. Fix parameters $g_n \in \BR$ and $\si_n > 0$, $n \in \MZ$. Take i.i.d. one-dimensional $(\CF_t)_{t \ge 0}$-Brownian motions $W_n = (W_n(t),\, t \ge 0),\ n \in \MZ$. 

\begin{defn} An infinite family $X = (X_n)_{n \in \MZ}$ of continuous adapted real-valued processes 
$$
X_n = (X_n(t), t \ge 0),\, n \in \MZ,
$$
forms a {\it two-sided infinite system of competing Brownian particles} with {\it drift coefficients} $g_n,\ n \in \MZ$, and {\it diffusion coefficients} $\si_n^2,\ n \in \MZ$, if the following conditions hold true:

\medskip

(a) the vector $X(t) = (X_n(t))_{n \in \MZ}$ is rankable for every $t \ge 0$;

\medskip

(b) for every $t \ge 0$, we can choose a ranking permutation $\mP_{t}$ of $X(t)$, so that for every $k \in \MZ$, the process $(\mP_t(k),\, t \ge 0)$, is $(\CF_t)_{t \ge 0}$-adapted; and the process $t \mapsto X_{\mP_t(k)}(t)$, is a.s. continuous;

\medskip

(c) the components $X_n,\ n \in \MZ$, satisfy the following system of SDEs:
\begin{equation}
\label{eq:SDE-main}
\md X_n(t) = \SL_{k \in \MZ}1\left(\mP_t(k) = n\right)\left(g_k\,\md t + \si_k\, \md W_n(t)\right),\ n \in \MZ.
\end{equation}
Each process $X_n$ is called the {\it $n$th named particle}, with {\it name} $n$. Each process $Y_k = (Y_k(t), t \ge 0)$, defined by $Y_k(t) := X_{\mP_{t}(k)}(t)$, $t \ge 0$, is called the {\it $k$th ranked particle}, with {\it rank} $k$. By construction, ranked particles satisfy $Y_k(t) \le Y_{k+1}(t)$ for all $k \in \MZ,\, t \ge 0$. The processes $W_n,\, n \in \MZ$, are called {\it driving Brownian motions} for this system $X$. 
\label{defn:CBP}
\end{defn}

Loosely speaking, in this system each particle moves as a Brownian motion with drift coefficient $g_k$ and diffusion coefficient $\si_k^2$, as long as it has rank $k$. When particles collide, they might exchange ranks, and in this case they exchange their rank-dependent drift and diffusion coefficients. 

\smallskip

The property (b) is necessary to ensure that particles $X_n,\, n \in \MZ$, can change ranks only when they collide with other particles $X_m,\, m \in \MZ$; or, equivalently, ranked particles $Y_k,\, k \in \MZ$, can change names only when they collide with other ranked particles. 

\smallskip

We can define similar finite systems $(X_n)_{1 \le n \le N}$ of $N$ particles, introduced in \cite{BFK2005}. These systems are also governed by the equation~\eqref{eq:SDE-main}, with the sum over $k = 1, \ldots, N$, instead of over $k \in \MZ$. As in Definition~\ref{defn:CBP}, we denote the $k$th ranked particle at time $t$ by $Y_k(t)$, for $k = 1, \ldots, N$. These ranked particles satisfy 
$$
Y_1(t) \le Y_2(t) \le \ldots \le Y_N(t).
$$
We can also define one-sided infinite systems $(X_n)_{n \ge 1}$, where particles are ranked from bottom to top. These systems were introduced in \cite{IKS2013}. They are governed by~\eqref{eq:SDE-main}, with the sum over $k = 1, 2, \ldots$ instead of over $k \in \MZ$. Here, the ranked particles $Y_k,\, k \ge 1$, satisfy
$$
Y_1(t) \le Y_2(t) \le \ldots
$$
For finite and one-sided infinite systems, we do not have to impose condition (b) from Definition~\ref{defn:CBP}. Rather, we can just rank particles from bottom to top: If we start from assigning rank $1$ to the lowest particles, then such ranking (resolving ties in lexicographic order) is unique, and automatically satisfies the condition (b) above (with $k = 1, 2, \ldots$ instead of $k \in \MZ$). 

\smallskip

Sometimes it is convenient to index particles $X_n$ and $Y_k$ in finite systems from $M$ to $N$, and in one-sided infinite systems from $M$ to $\infty$. We shall sometimes use this alternative indexing in this paper, when we prove our results. In this case, we always indicate that we are using this alternative indexing instead of the standard one. 

\begin{rmk} For a finite, one- or two-sided infinite system, we say {\it initial conditions are ranked} if $X_k(0) = Y_k(0)$ for all $k$. 
\end{rmk}


\begin{defn} For finite, one- and two-sided infinite systems, the {\it gap process} is defined as follows:
$$
Z = (Z(t), t \ge 0),\ \ Z(t) = (Z_n(t)),\ \ Z_n(t) := Y_{n+1}(t) - Y_n(t).
$$
\end{defn}

In other words, the component $Z_n$ is defined as the spacing between adjacent ranked particles $Y_n$ and $Y_{n+1}$. Let $\BR_+ := [0, \infty)$.  The gap process $Z$ takes values:

\smallskip

(a) in the positive orthant $\BR_+^{N-1}$ for a system of $N$ particles;

\smallskip

(b) in $\BR_+^{\infty}$ for a one-sided infinite system;

\smallskip

(c) in $\BR_+^{\MZ}$ for a two-sided infinite system.  

\begin{defn} A {\it stationary gap distribution} (for finite, one- or two-sided infinite systems) is defined as a probability measure $\pi$ in the orthant (finite- or infinite-dimensional) such that there exists a version of the system with $Z(t) \sim \pi$ for all $t \ge 0$. 
\end{defn}

We study two main topics in this article for two-sided infinite systems: (a) weak existence and uniqueness in law; (b) stationary gap distributions and long-term behavior for the gap process $Z(t)$, that is, weak limits of $Z(t)$ as $t \to \infty$. Most of our results in (b) are for the case $\si_n = 1$ for all $n \in \MZ$. 

\subsection{Notation} The symbol $\Ra$ denotes weak convergence. For $\al > 0$, $\Exp(\al)$ stands for the exponential distribution with rate $\al$, and mean $\al^{-1}$.  For $x \in \BR^{\MZ}$, we define 
$$
[x, \infty) := \{y \in \BR^{\MZ}\mid y_i \ge x_i\ \forall i \in \MZ\}.
$$
Take two probability measures $\nu_1$ and $\nu_2$ on $\BR^{\MZ}$. Then $\nu_1$ is {\it stochastically dominated by} $\nu_2$ if 
$$
\nu_1[x, \infty) \le \nu_2[x, \infty)\ \mbox{for all}\ x \in \BR^{\MZ}.
$$
We denote this by $\nu_1 \preceq \nu_2$. Same definition applies to probability measures on $\BR^{\infty}$ and $\BR^N$ for finite $N$. Two random variables $\xi_1$ and $\xi_2$ satisfy $\xi_1 \preceq \xi_2$ if their distributions $P_1$ and $P_2$ satisfy $P_1 \preceq P_2$. Take subsets $I \subseteq J \subseteq \MZ$. For $a = (a_i)_{i \in J} \in \BR^J$, define $[a]_I := (a_i)_{i \in I}$. For a probability measure $\rho$ on $\BR^J$, let $[\rho]_I$ be its marginal, corresponding to the components indexed by $i \in I$: 
$$
(z_i)_{i \in J} \sim \rho\ \mbox{implies}\ [z]_I := (z_i)_{i \in I} \sim [\rho]_I.
$$
Denote the tail of the standard normal distribution by
$$
\Psi(u) = \left(2\pi\right)^{-1/2}\int_u^{\infty}e^{-z^2/2}\,\md z
$$
Fix a $T > 0$. The {\it modulus of continuity} of a function $f : [0, T] \to \BR$, corresponding to $\de > 0$, is defined as
$$
\oa(f, [0, T], \de) := \sup\limits_{\substack{s, t \in [0, T]\\ |t - s| \le \de}}|f(t) - f(s)|. 
$$
The Dirac delta measure at $x$ is denoted by $\de_x$. The symbol $\mathbf{0}$ denotes the origin in $\BR^{\MZ}$. 

\subsection{Comparison with known results} We present some known results on existence and uniqueness, as well on the gap process, for finite and one-sided infinite systems. Then we highlight differences between these results and our new results in this paper for two-sided infinite systems.

\subsubsection{Existence and uniqueness} For finite systems, weak existence and uniqueness in law simply follows from \cite{BassPardoux}. It holds for any values of parameters $g_k \in \BR,\, \si_k > 0,\, k = 1, \ldots, N$, and for any initial condition. For one-sided infinite systems, we need to impose certain assumptions on $g_k,\, \si_k,\, k \ge 1$, as well as on the initial conditions $X(0) = x$, see \cite{S2011, IKS2013}, \cite[Theorem 3.1, Theorem 3.2]{MyOwn6}. The main idea behind the proof of weak existence and uniqueness in law for one-sided infinite systems is as follows: On a finite time interval, a given particle behaves as if it were only in a finite system of particles. Theorem~\ref{thm:existence} below states weak existence and uniqueness in law for two-sided infinite systems.  The proof is quite similar to the case of one-sided infinite systems. 


\subsubsection{Approximation by finite systems} In the paper \cite{MyOwn6}, we have proved that a one-sided infinite system is a weak limit of finite systems, as the number of particles in these finite systems goes to infinity. This result is used to study the gap process. Two-sided infinite systems can also be obtained as weak limits of finite systems, see Lemma~\ref{lemma:approx}. However, the proof for two-sided systems is much more complicated than for one-sided systems, because there is no bottom-ranked  particle in two-sided infinite systems.

\subsubsection{Gap process for finite systems} Consdier a system $X = (X_n)_{1 \le n \le N}$ of $N$ particles. Denote by $\ol{g}_N$  the average of all $N$ drift coefficients: $\ol{g}_N := (g_1 + \ldots + g_N)/N$. Impose  the following stability condition on drift coefficients:
\begin{equation}
\label{eq:stability-intro}
g_1 + \ldots + g_n > n\ol{g}_N,\ \mbox{for}\ n = 1, \ldots, N-1.\ 
\end{equation}
In words, condition~\eqref{eq:stability-intro} means that the average of drift coefficients for a few consecutive lower-ranked particles is larger than the average of all $N$ drift coefficients. It is known from~\cite{5people, PP2008, MyOwn6} that, under condition~\eqref{eq:stability-intro}, there is a unique stationary gap distribution $\pi$. Moreover, $Z(t) \Ra \pi$ as $t \to \infty$, regardless of the initial distribution of $Z(0)$. If condition~\eqref{eq:stability-intro} does not hold, then there are no stationary gap distributions for this finite system. If condition~\eqref{eq:stability-intro} holds together with $\si_n = 1$ for all $n$, then this distribution $\pi$ has an explicit product-of-exponentials form, see \cite{5people}:  
\begin{equation}
\label{eq:product-form}
\pi = \bigotimes\limits_{n=1}^{N-1}\Exp\left(\mu_n\right),\ \ \mu_n := 2(g_1 + \ldots + g_n - n\ol{g}_N),\ n = 1, \ldots, N-1.
\end{equation}
For general $\si_n,\, 1 \le n \le N$, an explicit form of $\pi$ is not known. 

\subsubsection{Gap process for one-sided infinite systems} Consider a system $X = (X_n)_{n \ge 1}$. Assume that $\si_n = 1$ for all $n \in \MZ$, and $\sup|g_n| < \infty$. It was shown in \cite{MyOwn13} that we always have a one-parameter product-of-exponentials family of stationary gap distributions $\pi_a,\, a \in \BR$. In  contrast with finite systems, we do not need to impose any stability condition similar to~\eqref{eq:stability-intro}. Therefore, the weak limit of $Z(t)$ as $t \to \infty$ depends on the initial distribution of $Z(0)$. For certain cases, we can describe there weak limits for at least some initial distributions, see \cite{MyOwn6}. (This last result is also valid when not all diffusion coefficients $\si_n$ are equal to $1$.) However, a complete description of these weak limits for all initial distributions remains an unsolved problem.

\subsubsection{Gap process for two-sided infinite systems} In this paper, we explore the same questions as above for two-sided infinite systems $(X_n)_{n \in \MZ}$, for the case $\sup|g_n| < \infty$. We study stationary gap distributions,  as well as weak limits of $Z(t)$ as $t \to \infty$. Most of our results are for the case of unit diffusion coefficients: $\si_n = 1,\, n \in \MZ$; however, some of our results are for the general case. The results on weak limits are quite similar to the ones for one-sided infinite systems, with similar proofs. However, the results on stationary distributions are drastically different from both finite and one-sided infinite systems. We can have at least three possibilities:

\smallskip

(a) A family of product-of-exponentials stationary gap distributions $\pi_a$ indexed by one real parameter $a \in \BR$. An example of this is when all $g_n = 0$, or, more generally, when $\sum_{n \in \MZ}|g_n| < \infty$.

\smallskip

(b) A family of product-of-exponentials stationary gap distributions $\pi_{a, b}$ indexed by two real parameters $a, b \in \BR$. An example of this is when $g_n = 1,\, n > 0$; $g_n = 0,\, n \le 0$. 

\smallskip

(c) There are no stationary gap distributions, and $Z(t) \Ra \mathbf{0}$ as $t \to \infty$. An example of this is when $g_n = 1,\ n < 0$; $g_n = 0,\ n \ge 0$. 

\subsection{Motivation and historical review} These rank-based systems of competing Brownian particles were the subject of  extensive research in the last decade. Finite systems were studied in the following articles: \cite{IK2010, IKS2013, MyOwn3, MyOwn5, MyOwn14} (triple and multiple collisions of particles); \cite{PP2008, 5people}, \cite[Section 2]{MyOwn6} (stationary distribution $\pi$ for the gap process); \cite{JM2008, IPS2013, MyOwn10} (convergence $Z(t) \Ra \pi$ as $t \to \infty$ with an exponential rate); concentration of measure, \cite{Pal, PS2010}; see also miscellaneous papers \cite{JR2014, Reygner2015, MyOwn2, MyOwn10}. One-sided infinite systems of competing Brownian particles $(X_n)_{n \ge 1}$ were introduced in \cite{PP2008} and further studied in \cite{S2011, IKS2013, MyOwn6,  MyOwn13, DemboTsai}. 

\smallskip

Finite systems of competing Brownian particles have various applications: (a) financial mathematics, \cite[Chapter 5]{FernholzBook}, \cite{CP2010, FernholzKaratzasSurvey, MyOwn4, JR2013b}; (b) scaling limits of asymmetrically colliding random walks (a certain type of an exclusion process on $\MZ$), \cite[Section 3]{KPS2012}; (c) discretized version of a McKean-Vlasov equation, which governs {\it nonlinear diffusion processes}, and is related to the study of plasma, \cite{S2012, JR2013a, 4people, Reygner2015}. 

\smallskip

There are several generalizations of these models: (a) {\it systems with asymmetric collisions}, when ``particles have different mass'', studied in \cite{KPS2012} (finite systems) and \cite{MyOwn6} (one-sided infinite systems); (b) {\it second-order models}, when drift and diffusion coefficients depend on both ranks and names, \cite{5people, 2order}; (c) {\it systems of competing L\'evy particles}, with L\'evy processes instead of Brownian motions driving these particles, \cite{S2011, MyOwn12, MyOwn15}. 

\smallskip

Similar ranked systems of Brownian particles derived from independent driftless Brownian motions were studied  in \cite{Arratia1983, Harris, Sznitman1, Sznitman2}. The paper \cite{Harris} studied a two-sided infinite system of competing Brownian particles with zero drifts and unit diffusions: 
\begin{equation}
\label{eq:trivial}
g_n = 0,\ \mbox{and}\ \si_n = 1\ \mbox{for all}\  n.
\end{equation}
These particles $X_n,\, n \in \MZ$, can be alternatively described as independent Brownian motions. It was shown that if the initial distribution corresponds to a Poisson point process on the real line with constant intensity, then $\Var Y_0(t) \sim ct^{1/2}$ for an explicit constant $c$, as $t \to \infty$. More general results can be found in \cite[Theorem 3.7.1]{Book}, when particles in a two-sided infinite system can be fractional Brownian motions or more general processes. The paper \cite{Arratia1983} studied asymptotics for the lowest-ranked particle $Y_1$ in a one-sided infinite system of competing Brownian particles with  parameters as in~\eqref{eq:trivial}. See also the paper \cite{FSW2015} for totally asymmtetric collisions of driftless Brownian particles. 

\smallskip

Several other papers study connections between systems of queues and one-dimensional interacting particle systems: \cite{Ferrari1, Ferrari2, Kipnis, Timo}. Links to the GUE random matrix ensemble can be found in \cite{Barysh, OConnellYor}. Similar one-sided infinite systems of ranked particles in discrete time were studied in \cite{Aizenman1, Aizenman2}. In particular, in \cite{Aizenman1} they found stationary gap distributions for a discrete-time analogue of a one-sided infinite system with parameters~\eqref{eq:trivial}. See also related papers \cite{Bramson1, Bramson2, Tracer, Spitzer}. 

\smallskip

Let us also mention the paper \cite{Spohn} about relation between Dyson's Brownian motion and finite systems of competing Brownian particles with parameters as in~\eqref{eq:trivial}. The difference between Dyson's Brownian motion and systems of competing Brownian particles is that the logarithmic potential repels particles in the Dyson model, so that they cannot even hit each other. A recent paper \cite{Tsai} studies two-sided infinite systems of Dyson's Brownian particles. 


\subsection{Organization of the paper} Section 2 contains all our results about existence and  uniqueness of two-sided infinite systems, their basic properties, and approximation by finite systems. Section 3 is devoted to our results about the gap process: stationary gap distributions and long-term behavior of the gap process, for two-sided infinite systems. Section 4 contains all the proofs. The Appendix contains some technical lemmata and observations. 

\section{Existence, Uniqueness, and Basic Properties}

\subsection{Existence and uniqueness} We need some assumption on the initial condition $X(0) = x = (x_n)_{n \in \MZ} \in \BR^{\MZ}$; otherwise we cannot hope that even weak existence holds. Indeed, assume for simplicity that all $g_n = 0$, and all $\si_n = 1$. If $x_n = 0$ for every $n$, then $X_n,\, n \in \MZ$, are simply i.i.d. Brownian motions starting from zero. It is an easy exercise to show that the sequence $X(t) = (X_n(t))_{n \in \MZ}$ is not rankable for $t > 0$. Therefore, starting points $X_n(0) = x_n$ for each particle $X_n,\, n \in \MZ$, should be far enough apart. More precisely, they should be in the following subset of $\BR^{\MZ}$:
$$
\mathcal W := \Bigl\{x = (x_n)_{n \in \MZ} \in \BR^{\MZ}\,\, \Big{\vert}\, \SL_{n \in \MZ}e^{-\al x_n^2} < \infty\ \mbox{for all}\ \al > 0\Bigr\}.
$$
We say that a sequence $(a_n)_{n \in \MZ}$ of real numbers has {\it constant tails} if there exist $n_{\pm} \in \MZ$ such that $a_n = a_{n_+}$ for $n \ge n_+$, and $a_n = a_{n_-}$ for $n \le n_-$. 

\begin{thm} Assume $X(0) = x \in \mathcal W$ a.s., and at least one of the two following conditions holds:

\medskip

(a) $\si_n \equiv \si > 0$, $g_n \to g_{\infty}$ as $|n| \to \infty$, and $\sum_{n \in \MZ}(g_n - g_{\infty})^2 < \infty$;\, or

\medskip

(b) the sequences $(g_n)_{n \in \MZ}$ and $(\si_n)_{n \in \MZ}$ have constant tails.

\medskip

Then there exists in the weak sense a unique in law version of the two-sided infinite system of competing Brownian particles with drift  coefficients $(g_k)_{k \in \MZ}$ and diffusion coefficients $(\si_k^2)_{k \in \MZ}$, starting from $X(0) = x$. 
\label{thm:existence}
\end{thm}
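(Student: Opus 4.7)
The plan is to follow the strategy used for one-sided infinite systems in \cite{S2011, IKS2013, MyOwn6}: realize the two-sided system as a weak limit of finite systems, and then obtain uniqueness via a finite-horizon localization argument. I would introduce finite approximations $X^{(N)} = (X^{(N)}_n)_{|n| \le N}$ whose drifts $g^{(N)}_k$ and diffusions $\si^{(N)}_k$ agree with $g_k, \si_k$ on the central block and are extended constantly outside it (in case (a) the boundary values are $g_\infty$ and $\si$; in case (b) they are the eventual constants provided by the constant-tails hypothesis). Each $X^{(N)}$ exists and is unique in law by \cite{BassPardoux}, and I initialize it from the truncation $(x_n)_{|n| \le N}$.

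The heart of the argument is a uniform-in-$N$ localization estimate. Since each particle's driving semimartingale has drift bounded by $\sup_k |g_k| < \infty$ and quadratic variation growing at rate at most $\sup_k \si_k^2 < \infty$, a standard Gaussian maximal inequality yields
\[
\MP\Bigl(\max_{0 \le t \le T}\bigl|X^{(N)}_n(t) - x_n\bigr| > r\Bigr) \le 2\Psi\bigl((r - CT)/(C\sqrt{T})\bigr),
\]
uniformly in $n$ and $N$, for a constant $C$ depending only on the coefficient bounds. Because $x \in \mathcal W$, summing this estimate over $n$ shows that for each fixed $m$ and $T$, only finitely many particles $X^{(N)}_n$ can reach a neighborhood of $x_m$ during $[0,T]$, with probability one in the limit. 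This gives coordinate-wise tightness of $\{X^{(N)}\}$ in $C([0,T], \BR)$, rankability of any subsequential weak limit, and the existence of a process $X$ satisfying clauses (a)--(c) of Definition~\ref{defn:CBP}.

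For uniqueness in law, the same estimate reduces the problem to finite subsystems: on the localization event that no particle with $|n| > K$ ever attains a rank in $[-M, M]$ during $[0, T]$, the truncated process $(X_n)_{|n| \le K}$ satisfies the SDE of a finite system with boundary-extended coefficients, whose joint law is unique by \cite{BassPardoux}. Sending $K \to \infty$ with $M$ fixed determines the law of $(X_n)_{|n| \le M}$ on $[0,T]$; then $M \to \infty$ and $T \to \infty$ yield uniqueness in law on $C([0,\infty), \BR^{\MZ})$.

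The main obstacle, absent in the one-sided setting, is the lack of a bottom anchor particle $Y_1$: one must control the boundary effect of the finite approximations symmetrically at both $\pm\infty$, and this is where conditions (a) and (b) enter. In case (b) the constant tails permit a canonical boundary-extended finite approximation and a direct stochastic comparison with independent Brownian motions at the edges. In case (a), the $\ell^2$-summability of $g_n - g_\infty$ lets one reduce via a Girsanov change of measure to the case $g_n \equiv g_\infty$, $\si_n \equiv \si$, where particles are simply i.i.d. Brownian motions with common drift and rankability on $[0,T]$ follows from $x \in \mathcal W$ alone. In either case the localization estimate is salvaged uniformly in $N$; without such an assumption, boundary effects from the finite approximations could propagate inward uncontrollably and the proof would break down.
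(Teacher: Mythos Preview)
Your treatment of case (a) via Girsanov matches the paper's approach exactly: once the $\ell^2$ condition lets you change measure to a system of i.i.d.\ Brownian motions with common drift and diffusion, existence and uniqueness follow directly, and rankability is the residual issue the paper also isolates.

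For case (b), however, the paper takes a genuinely different route. Rather than realizing the two-sided system as a weak limit of finite approximations, it gives an \emph{explicit inductive construction}: since the tails of $(g_n)$ and $(\si_n)$ are constant, particles with ranks outside the central block $\{n_-+1,\ldots,n_+-1\}$ are, so long as they stay outside, independent Brownian motions with fixed drift/diffusion $(g_\pm,\si_\pm)$. The system is built on successive random intervals $[\tau_m,\tau_{m+1}]$, where $\tau_{m+1}$ is the first time a tail particle hits the growing central finite system; at each $\tau_m$ the hitting particles are absorbed into the finite block, and one restarts. The key lemmata show $\tau_m\to\infty$ a.s.\ and that each $I_m$ is finite. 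Uniqueness is then immediate by induction on $m$, since on each $[\tau_m,\tau_{m+1}]$ the dynamics decompose into a finite competing system plus independent Brownian motions. This bypasses any tightness or weak-limit argument.

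Your weak-limit approach is legitimate---indeed the paper carries it out later as a separate approximation lemma---but your outline understates the main difficulty. Tightness of the \emph{named} particles $X^{(N)}_n$ follows from the Gaussian maximal bound you wrote, but to pass the rank-dependent SDE to the limit you need tightness of the \emph{ranked} particles $Y^{(N)}_k$, and this does not follow from your localization estimate alone: knowing that few particles enter a fixed spatial window does not by itself control the modulus of continuity of a fixed-rank particle, because there is no bottom anchor. The paper handles this by comparing $Y^{(N)}_k$ with the bottom particle of a one-sided infinite system (whose existence is already known), obtaining uniform lower and upper barriers; only then can one argue that $Y^{(N)}_k$ is, on a high-probability event, a Lipschitz function of finitely many $X^{(N)}_i$. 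Your sentence ``rankability of any subsequential weak limit, and the existence of a process $X$ satisfying clauses (a)--(c)'' elides exactly this step.
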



\begin{rmk} Under assumptions of Theorem~\ref{thm:existence}, in both cases (a) and (b), we have:
\begin{equation}
\label{eq:bdd-seq}
\ol{g} := \sup\limits_{k \in \MZ}|g_k| < \infty,\ \mbox{and}\ \ol{\si} := \sup_{k \in \MZ}\si_k < \infty.
\end{equation}
\label{rmk:bdd-seq}
\end{rmk}

\subsection{Basic properties} The next statement represents a two-sided infinite system as a weak limit of finite systems, as the number of particles in these finite systems goes to infinity. Take a two-sided infinite system $X = (X_n)_{n \in \MZ}$ of competing Brownian particles with drifts $g_n$ and diffusions $\si_n^2$, $n \in \MZ$, starting from $X(0) = x = (x_n)_{n \in \MZ}$. Without loss of generality, assume the initial conditions are ranked: $x_n \le x_{n+1}$ for $n \in \MZ$. For every pair $M, N$ of integers such that $M < N$, consider a finite system of competing Brownian particles 
\begin{equation}
\label{eq:finite-X}
X^{(M, N)} = \left(X_M^{(M, N)}, \ldots, X_N^{(M, N)}\right)
\end{equation}
with drifts $g_M, \ldots, g_N$, and diffusions $\si_M^2, \ldots, \si_N^2$, starting from $(x_M, \ldots, x_N)$. Define the corresponding system of ranked particles:
\begin{equation}
\label{eq:finite-Y}
Y^{(M, N)} = \left(Y_M^{(M, N)}, \ldots, Y_N^{(M, N)}\right).
\end{equation}

\begin{defn}
A sequence $(M_j, N_j)_{j \ge 1}$ in $\MZ^2$ is called an {\it approximative sequence} if 
$$
M_{j+1} \le M_j < N_j \le N_{j+1}\ \mbox{for every}\ j \ge 1,
$$
$$
\lim\limits_{j \to \infty}M_j = -\infty,\ \lim\limits_{j \to \infty}N_j = \infty.
$$ 
Take any approximating sequence $(M_j, N_j)$. Then for every $k \in \MZ$, there exists a $j_k$ such that $M_j \le k < N_j$ for $j \ge j_k$. 
\label{defn:approx}
\end{defn}

\begin{lemma} Under assumptions of Theorem~\ref{thm:existence}, for every finite subset $I \subseteq \MZ$ and every $T > 0$, we have the following weak convergence in $C([0, T], \BR^{2|I|})$:
$$
\left([X^{(M, N)}]_I, [Y^{(M, N)}]_I\right) \Ra \left([X]_I, [Y]_I\right),\ \ (M, N) \to (-\infty, +\infty).
$$
That is, for every approximative sequence $(M_j, N_j)_{j \ge 1}$ from Definition~\ref{defn:approx}, every finite subset $I \subseteq \MZ$, and every $T > 0$, we have the following weak convergence in $C([0, T], \BR^{2|I|})$:
$$
\left([X^{(M_j, N_j)}]_I, [Y^{(M_j, N_j)}]_I\right) \Ra \left([X]_I, [Y]_I\right),\ \ j \to \infty.
$$
\label{lemma:approx}
\end{lemma}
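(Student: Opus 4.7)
The plan is to prove the convergence by a tightness-plus-identification argument, using weak uniqueness from Theorem~\ref{thm:existence} at the end. It suffices to show convergence along an arbitrary approximative sequence $(M_j, N_j)_{j \ge 1}$. By a diagonal argument over nested finite windows $J \subset \MZ$, I would extract a subsequence along which the full pair $(X^{(M_j,N_j)}, Y^{(M_j,N_j)})$ converges in law, in the product topology on $C([0,T], \BR)^{\MZ} \times C([0,T],\BR)^{\MZ}$ (extending undefined coordinates by the constant $x_n$), to some limit $(\tilde X, \tilde Y)$. The task then reduces to showing that $(\tilde X, \tilde Y)$ has the same law as $(X, Y)$, whence the marginal convergence on the finite window $I$ follows immediately.

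For tightness, fix any finite window $J \subset \MZ$ and take $j$ large enough that $M_j \le \min J$ and $\max J \le N_j$. By Remark~\ref{rmk:bdd-seq}, the drift and diffusion coefficients are bounded by $\ol g$ and $\ol \si$, so each named particle $X_n^{(M_j,N_j)}$ for $n \in J$ is a semimartingale with drift bounded in absolute value by $\ol g$ and quadratic variation of rate at most $\ol \si^2$. Starting from $x_n$, its modulus of continuity on $[0,T]$ is then controlled uniformly in $j$ via the Burkholder--Davis--Gundy inequality and Kolmogorov's criterion, giving tightness in $C([0,T],\BR)$. For ranked particles, the standard reflected-semimartingale decomposition expresses $Y_k^{(M_j,N_j)}$ as a process with drift $g_k$ and diffusion $\si_k$, plus nonnegative local-time corrections at collisions with $Y_{k\pm 1}^{(M_j,N_j)}$; these corrections telescope across adjacent ranks and can be controlled by sandwiching $Y_k^{(M_j, N_j)}$ between nearby named particles, whose tightness is already in hand. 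The assumption $x \in \CW$ guarantees that, with high probability, only names in an a.s.\ finite window around $k$ can attain rank $k$ on $[0,T]$, closing the tightness argument on the window $J$.

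It remains to identify the limit $(\tilde X, \tilde Y)$ as a version of the two-sided infinite system. Via the Skorokhod representation theorem, realize the convergence pathwise on a common probability space. Rankability of $\tilde X(t)$ and the relation $\tilde Y_k(t) = \tilde X_{\tilde\mP_t(k)}(t)$ for an adapted ranking permutation $\tilde\mP_t$ pass to the limit directly, using that the gaps $\tilde Y_{k+1}(t) - \tilde Y_k(t)$ are nondegenerate for a.e.\ $t$ by the $\CW$ condition. The hard part, which I expect to be the main obstacle, is passing to the limit inside the rank-indicator coefficients in~\eqref{eq:SDE-main}: the functionals $1(\mP_t^{(j)}(k) = n)$ are discontinuous in path space, so the continuous mapping theorem does not apply directly. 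I would bypass this by the classical fact that, with probability one, no triple collisions occur in the limit system, so the set of $t \in [0,T]$ at which some pair of named particles coincide has Lebesgue measure zero; the absence of triple collisions is obtained via a Girsanov reduction to independent driftless Brownian motions where it is classical. This gives $\md t$-a.e.\ convergence of the indicator coefficients along the subsequence, and dominated convergence transports both the drift and the stochastic integrals in~\eqref{eq:SDE-main} through the limit. Once $(\tilde X, \tilde Y)$ is shown to satisfy Definition~\ref{defn:CBP} with parameters $(g_n, \si_n)_{n \in \MZ}$ and initial condition $x$, Theorem~\ref{thm:existence} yields $(\tilde X, \tilde Y) \stackrel{d}{=} (X, Y)$, so the whole sequence (not just a subsequence) converges, completing the proof.
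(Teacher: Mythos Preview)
Your overall architecture---tightness of $(X^{(M_j,N_j)}_n)_j$ and $(Y^{(M_j,N_j)}_k)_j$, diagonal extraction, Skorokhod representation, identification of the limit as a solution of Definition~\ref{defn:CBP}, and then uniqueness in law from Theorem~\ref{thm:existence}---is exactly the route the paper takes. The named-particle tightness via bounded drift/diffusion matches the paper's Lemma~4.5 essentially verbatim.

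There is, however, a genuine gap in your tightness argument for the ranked particles. You propose to control the local-time corrections by ``telescoping across adjacent ranks'' and ``sandwiching $Y_k^{(M_j,N_j)}$ between nearby named particles,'' and then invoke the $\mathcal W$ condition to localize the set of names attaining rank $k$. But this is circular: to show that only names in a fixed finite window can attain rank $k$ uniformly in $j$, you first need $Y_k^{(M_j,N_j)}$ to stay in a fixed compact interval with high probability uniformly in $j$; and telescoping the local times gives sums over the growing index range $[M_j,k]$, which does not produce uniform bounds. The paper breaks this circularity with a separate, substantive step (its Lemma~4.8): it compares each finite system with an auxiliary \emph{one-sided} infinite system obtained by stripping off the bottom (respectively top) particles, whose existence and whose ranked particle $\ol Y_k$ are already controlled by the one-sided theory. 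This comparison yields $Y_k^{(M_j,N_j)}(t) \ge \ol Y_k(t)$ (and the mirror bound from above), giving the uniform-in-$j$ confinement that your sketch is missing. Only after that does the finite-name-window argument go through.

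A second issue is your no-ties step. You write that absence of triple collisions, obtained ``via a Girsanov reduction to independent driftless Brownian motions,'' implies the pairwise-collision time set has Lebesgue measure zero. Two problems: first, Girsanov removes drift but not diffusion, so in case~(b) of Theorem~\ref{thm:existence} with non-constant $\si_n$ you do not land on independent Brownian motions; second, what is actually needed is that for each fixed $t$ the limit has no pairwise tie almost surely (to get a.e.\ convergence of the rank indicators), and this does not follow from a triple-collision statement. The paper proves the needed no-tie lemma (its Lemma~4.7) by a different mechanism: it isolates any putative finite tie inside a gap-separated subblock, uses the Feller property to recognize that subblock as a \emph{finite} competing-particle system, and invokes the known fixed-time no-tie result for finite systems; infinite ties are ruled out by the same finite-name-window estimate used for tightness. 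This argument works uniformly in cases~(a) and~(b).
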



We can extend the comparison techniques of \cite{MyOwn2, MyOwn6} for finite and one-sided infinite systems to two-sided infinite systems. Let us state one result, which is an analogue and a corollary of \cite[Corollary 3.11]{MyOwn6}
It is used later in this article.  

\begin{lemma}Take two copies, $X$ and $\ol{X}$, of a two-sided infinite system of competing Brownian particles, with the same drift and diffusion coefficients, but with different initial conditions, satisfying conditions of Theorem~\ref{thm:existence}. Let $Y$ and $\ol{Y}$ be the corresponding ranked versions, and let $Z$ and $\ol{Z}$ be the corresponding gap processes.

\smallskip

(a) If $Y(0) \preceq \ol{Y}(0)$, then $Y(t) \preceq \ol{Y}(t)$ for all $t \ge 0$.

\smallskip

(b) If $Z(0) \preceq \ol{Z}(0)$, then $Z(t) \preceq \ol{Z}(t)$ for all $t \ge 0$. 
\label{lemma:comp}
\end{lemma}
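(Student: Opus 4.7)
The plan is to approximate by finite systems via Lemma~\ref{lemma:approx}, invoke the finite-system comparison result \cite[Corollary 3.11]{MyOwn6}, and then pass to the limit on each finite-index marginal. Without loss of generality I would assume both $X$ and $\ol{X}$ start from ranked initial conditions, so that the finite approximations $X^{(M,N)}$ and $\ol{X}^{(M,N)}$ from~\eqref{eq:finite-X}--\eqref{eq:finite-Y} are directly available and Lemma~\ref{lemma:approx} applies to both systems with the same drift and diffusion coefficients as the infinite ones.

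For part~(a), the first step is to observe that stochastic domination on $\BR^{\MZ}$ restricts to stochastic domination on any finite-index marginal: applying the defining inequality $\nu_1[x, \infty) \le \nu_2[x, \infty)$ with coordinates off the marginal set to a constant $-K$ and letting $K \to \infty$ (monotone convergence) yields $[\nu_1]_I \preceq [\nu_2]_I$. In particular $Y(0) \preceq \ol{Y}(0)$ forces $Y^{(M,N)}(0) \preceq \ol{Y}^{(M,N)}(0)$ in $\BR^{N-M+1}$, so \cite[Corollary 3.11]{MyOwn6} gives $Y^{(M,N)}(t) \preceq \ol{Y}^{(M,N)}(t)$ for every $t \ge 0$. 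Fix a finite $I \subset \MZ$ and an approximative sequence $(M_j, N_j)$ with $I \subseteq \{M_j, \ldots, N_j\}$ for all large $j$. By Lemma~\ref{lemma:approx} we have $[Y^{(M_j, N_j)}(t)]_I \Ra [Y(t)]_I$ and $[\ol{Y}^{(M_j, N_j)}(t)]_I \Ra [\ol{Y}(t)]_I$ in $\BR^{|I|}$, and stochastic domination on a finite-dimensional Euclidean space is preserved under weak convergence (via the characterization $\int f\, d\mu \le \int f\, d\nu$ for all bounded continuous coordinatewise-increasing $f$). Therefore $[Y(t)]_I \preceq [\ol{Y}(t)]_I$ for every finite $I$, hence $Y(t) \preceq \ol{Y}(t)$ on $\BR^{\MZ}$. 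Part~(b) is identical in structure: replace $Y$, $\ol{Y}$ by the gap processes $Z$, $\ol{Z}$, use the finite-system gap comparison from \cite[Corollary 3.11]{MyOwn6}, and note that $Z_n = Y_{n+1} - Y_n$ is a continuous functional, so Lemma~\ref{lemma:approx} transfers weak convergence of ranked particles on any finite index set to weak convergence of the corresponding gaps.

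The main obstacle is the upgrade from ``$\preceq$ holds on every finite marginal'' to ``$\preceq$ holds on $\BR^{\MZ}$''. I expect to handle this via Strassen's theorem: on each finite marginal one obtains a coupling realizing the coordinatewise inequality, and since the partial order on $\BR^{\MZ}$ is closed and the marginal couplings are automatically consistent, the Kolmogorov extension theorem produces a single probability space carrying a copy of $(Y(t), \ol{Y}(t))$ with $Y(t) \le \ol{Y}(t)$ componentwise almost surely, which is equivalent to $Y(t) \preceq \ol{Y}(t)$. A secondary technical point is to verify that \cite[Corollary 3.11]{MyOwn6} applies with the exact hypotheses produced by our finite approximations, but this is clear because $X^{(M,N)}$ and $\ol{X}^{(M,N)}$ share all drift and diffusion coefficients by construction and their initial distributions inherit the $\preceq$-relation from the infinite system.
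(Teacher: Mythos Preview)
Your approach is essentially identical to the paper's: approximate via Lemma~\ref{lemma:approx}, apply the finite-system comparison \cite[Corollary 3.11]{MyOwn2}, and pass to the limit on each finite marginal using that stochastic domination survives weak convergence. One small remark: the Strassen/Kolmogorov detour is unnecessary (and the claim that the marginal couplings are ``automatically consistent'' is not obvious); since $\preceq$ is defined by $\nu_1[x,\infty)\le\nu_2[x,\infty)$ and $[x,\infty)$ is the decreasing intersection of the finite-index events $\{y_i\ge x_i,\ i\in I\}$, continuity from above gives the upgrade from all finite $I$ to $\BR^{\MZ}$ directly.
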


In the next lemma, we obtain the equation for the dynamics of ranked particles $Y_k,\, k \in \MZ$. Note that we do {\it not} impose assumptions of Theorem~\ref{thm:existence} here. 

\begin{lemma} Consider any two-sided infinite system of competing Brownian particles with drift coefficients $g_n$ and diffusion coefficients $\si_n^2$, starting from $X(0) = x \in \mathcal W$. 
Assume that
\begin{equation}
\label{eq:bdd-coeff}
\ol{g} := \sup\limits_{n \in \MZ}|g_n| < \infty,\ \mbox{and}\ \, \ol{\si} := \sup\limits_{n \in \MZ}\si_n < \infty.
\end{equation}
 
\smallskip

(a) Then for every interval $[u_-, u_+] \subseteq \BR$ and every $T > 0$, there exist a.s. only finitely many $n \in \MZ$ such that there exists a $t \in [0, T]$ for which we have: $X_n(t) \in [u_-, u_+]$. In other words, in a finite amount of time, every finite interval is visited by only finitely many particles. 

\smallskip

(b) The ranked particles $Y_k$, $k \in \MZ$, satisfy the following equations:
\begin{equation}
\label{eq:ranked}
Y_k(t) = Y_k(0) + g_kt + \si_kB_k(t) + \frac12L_{(k-1, k)}(t) - \frac12L_{(k, k+1)}(t),\ \ t \ge 0.
\end{equation}
Here, $B_k = (B_k(t),\, t \ge 0),\, k \in \MZ$, are i.i.d. Brownian motions, and for every $k \in \MZ,\, L_{(k, k+1)} = (L_{(k, k+1)}(t), t \ge 0)$ is the semimartingale local time process at zero of $Y_{k+1} - Y_k$.
\label{lemma:ranked}
\end{lemma}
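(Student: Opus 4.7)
I would prove (a) by a Borel--Cantelli argument, and then use (a) to localize the dynamics in (b) to a finite sub-system where the classical ranked SDE can be applied.

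For (a), observe that by Definition~\ref{defn:CBP}(c) each named particle has the decomposition $X_n(t) = x_n + A_n(t) + M_n(t)$, with drift $|A_n(t)| \le \ol{g}\,t$ and continuous martingale $M_n$ satisfying $\langle M_n\rangle_t \le \ol{\si}^2\, t$. Dambis--Dubins--Schwarz realises $M_n$ as a Brownian motion run on a time scale at most $\ol{\si}^2 T$, and the reflection principle then yields
$$
\MP\!\left(\sup_{t \in [0, T]}|X_n(t) - x_n| \ge r\right)\ \le\ 2\exp\!\left(-\frac{(r - \ol{g}T)^2}{2\ol{\si}^2 T}\right), \qquad r \ge \ol{g}T.
$$
For $X_n$ to visit $[u_-, u_+]$ on $[0, T]$, its oscillation must reach $|x_n| - R$ with $R := \max(|u_-|, |u_+|)$; for $|x_n|$ large the corresponding probability is at most $C \exp(-c\, x_n^2)$ for constants $c, C > 0$ depending only on $\ol{g}, \ol{\si}, T, R$. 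The condition $x \in \CW$ makes these probabilities summable in $n$, and Borel--Cantelli yields (a).

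For (b), I would first set
$$
B_k(t) := \sum_{n \in \MZ}\int_0^t \munit(\mP_s(k) = n)\, dW_n(s),\qquad k \in \MZ,
$$
which is a well-defined continuous local martingale by the adaptedness of $\mP_s(k)$ from Definition~\ref{defn:CBP}(b). Since $\mP_s$ is a bijection at each $s$, the indicator processes $\munit(\mP_s(k) = n)$ are pairwise disjoint in $n$ for fixed $k$, and disjoint in $k$ for fixed $n$; hence $\langle B_k, B_l\rangle_t = \delta_{kl}\,t$ and L\'evy's criterion identifies $(B_k)_{k \in \MZ}$ as i.i.d.\ Brownian motions.

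To obtain~\eqref{eq:ranked}, I would localize via part (a). Fix $k$ and $T$; part (a) guarantees that only finitely many named particles ever approach the trajectories of $Y_{k-1}, Y_k, Y_{k+1}$ on $[0, T]$, so there exist random $M < k < N$ with the property that on $[0, T]$ no particle outside $\{X_M, \ldots, X_N\}$ ever holds a rank in $\{k-1, k, k+1\}$. On this event the mid-rank trio $(Y_{k-1}, Y_k, Y_{k+1})$ is driven identically to the corresponding trio of the finite competing sub-system $(X_M, \ldots, X_N)$, for which~\eqref{eq:ranked} is the classical ranked SDE of \cite{BFK2005} (derived there by iterated Tanaka's formula for order statistics). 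Sending $M \to -\infty$, $N \to \infty$ removes the localization and delivers~\eqref{eq:ranked}. The main obstacle is precisely this step: one must verify rigorously that on a high-probability event the dynamics of three neighbouring ranked particles depend only on a finite, a priori selected set of named particles, and this is exactly what the local-finiteness conclusion of (a) supplies.
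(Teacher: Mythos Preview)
Your proposal is correct and follows essentially the same route as the paper. For (a) the paper also writes $X_n(t)=x_n+\int_0^t\be_n(s)\,\md s+\int_0^t\rho_n(s)\,\md W_n(s)$ with $|\be_n|\le\ol g$, $|\rho_n|\le\ol\si$, invokes the same Gaussian tail bound (stated there as \cite[Lemmata 7.1, 7.2]{MyOwn6}) and then Borel--Cantelli via $x\in\mathcal W$; for (b) the paper is terser, simply saying ``similar to \cite[Lemma 3.5]{MyOwn6}; follows from part (a) and the analogous property for finite systems,'' which is exactly your localization-to-a-finite-subsystem argument, and your explicit construction of the $B_k$ via L\'evy's criterion makes that step more transparent than the paper's one-line citation.
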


\begin{rmk} Similar equations~\eqref{eq:ranked} hold for ranked particles in finite and one-sided infinite systems, with understanding that $L_{(0, 1)} \equiv 0$ for a one-sided infinite system $X = (X_n)_{n \ge 1}$, and similarly $L_{(0, 1)} \equiv 0$, $L_{(N, N+1)} \equiv 0$ for a finite system $X = (X_1, \ldots, X_N)$. 
\label{rmk:ranked-general}
\end{rmk}

An informal description of the dynamics of ranked particles from~\eqref{eq:ranked} is as follows. The $k$th ranked particle $Y_k$ moves as a Brownian motion with drift coefficient $g_k$ and diffusion coefficient $\si_k^2$, as long as it does not collide with adjacent ranked particles $Y_{k-1}$ and $Y_{k+1}$. When the particle $Y_k$ collides with $Y_{k+1}$, these two particles are pushed apart by an increase $\md L_{(k, k+1)}$ in the semimartingale local time $L_{(k, k+1)}$. This push $\md L_{(k, k+1)}$ is split evenly between these colliding particles: one-half $(1/2)\md L_{(k, k+1)}$ is added to $Y_{k+1}$ to push it up; and one-half $(1/2)\md L_{(k, k+1)}$ is subtracted from $Y_k$ to push it down. This way, the rankings $Y_k \le Y_{k+1}$ is preserved. Same principles apply to collision between particles $Y_k$ and $Y_{k-1}$. 

\smallskip

One can generalize this model by taking other nonnegative coefficients $q^+_{k+1}$ and $q^-_k$ instead of $1/2$. These coefficients should satisfy $q^+_{k+1} + q^-_k = 1$. This way, the share $q^+_{k+1}\md L_{(k, k+1)}$ is added to $Y_{k+1}$, and the share $q^-_{k}\md L_{(k, k+1)}$ is subtracted from $Y_k$. For finite and one-sided infinite systems, this was done respectively in \cite{KPS2012} and \cite{MyOwn6};  However, we shall not study this generalization in our paper.

\section{The Gap Process: Stationary Distributions and Weak Convergence} 

Define the mapping $\Phi : \BR^{\MZ}_+ \to \BR^{\MZ}$ as follows:
$$
\Phi_n(z) = 
\begin{cases}
z_0 + \ldots + z_{n-1},\ \ n \ge 1;\\
0,\ \ n = 0;\\
- z_{-1} - \ldots - z_{-n},\ \ n \le -1,
\end{cases}
\ \ n \in \MZ,\ \ \mbox{for}\ \ z = (z_n)_{n \in \MZ} \in \BR_+^{\MZ}. 
$$ 
This mapping has the following meaning in our context. Take $X = (X(t),\, t \ge 0)$,  a two-sided infinite system of competing Brownian particles. Let $Y = (Y(t),\, t \ge 0)$ be the corresponding system of ranked particles, and $Z = (Z(t),\, t \ge 0)$ be its gap process. Then 
$$
Y_n(t) = \Phi_n(Z(t)) + Y_0(t),\, t \ge 0,\, n \in \MZ.
$$
Define the following subset $\mathcal V \subseteq \BR_+^{\MZ}$: 
\begin{equation}
\label{eq:V-W}
\mathcal V := \{z = (z_k)_{k \in \MZ}\in \BR_+^{\MZ}\mid \Phi(z) \in \mathcal W\}.
\end{equation}
Then the following statements are equivalent: for every $t \ge 0$, 
$$
X(t) \in \mathcal W\ \Leftrightarrow\ Y(t) \in \mathcal W\ \Leftrightarrow\ Z(t) \in \mathcal V.
$$

\subsection{Stationary gap distributions for unit diffusions}  In this subsection, we assume 
\begin{equation}
\label{eq:unit-diffusions}
\si_n = 1\ \mbox{for all}\ n \in \MZ.
\end{equation}
The following theorem is the main result of this paper. 

\begin{thm} Assume conditions of Theorem~\ref{thm:existence} hold, together with~\eqref{eq:unit-diffusions}.  For any pair $(a, b) \in \BR^2$ of real numbers, consider the following sequence:
\begin{equation}
\label{eq:rates}
\la = (\la_n)_{n \in \MZ} \in \BR^{\MZ},\ \la_n = 2\Phi_{n+1}(g) + a + bn,\ n \in \MZ.
\end{equation}
If all $\la_n > 0$, then the following is a stationary gap distribution, supported on $\mathcal V$:
\begin{equation}
\label{eq:explicit-pi-a-b}
\pi _{a, b} := \bigotimes\limits_{n \in \MZ}\Exp(\la_n).
\end{equation}
\label{thm:stationary}.
\end{thm}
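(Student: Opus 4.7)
The strategy is to realize $\pi_{a,b}$ as a common projection of explicit stationary gap distributions of a sequence of finite systems, and then transfer stationarity to the limit via an approximation in the spirit of Lemma~\ref{lemma:approx}. For each pair of integers $M < 0 < N$, I would consider the finite system $\tilde X^{(M,N)}$ on indices $M, M+1, \ldots, N$ with unit diffusions, with interior drifts $\tilde g_n^{(M,N)} := g_n$ for $M < n < N$, and with boundary drifts
\[
\tilde g_M^{(M,N)} := \Phi_{M+1}(g) + \tfrac{a}{2} + \tfrac{b(M-1)}{2}, \qquad \tilde g_N^{(M,N)} := -\Phi_N(g) - \tfrac{a}{2} - \tfrac{bN}{2}.
\]
A direct calculation gives $\bar{\tilde g}^{(M,N)} = -b/2$ and, from the formula~\eqref{eq:product-form} for the finite system, $\mu_M = 2\tilde g_M^{(M,N)} - 2\bar{\tilde g}^{(M,N)} = \la_M$. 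Combining the telescoping identity $\la_k - \la_{k-1} = 2 g_k + b$ with $\tilde g_k^{(M,N)} = g_k$ for $M < k < N$, a one-line induction yields $\mu_k = \la_k$ for all $k \in \{M, \ldots, N-1\}$. The stability condition~\eqref{eq:stability-intro} for $\tilde X^{(M,N)}$ reduces precisely to $\la_M, \ldots, \la_{N-1} > 0$, which holds by hypothesis, so by~\cite{5people} the unique stationary gap distribution of $\tilde X^{(M,N)}$ is $\bigotimes_{k=M}^{N-1}\Exp(\la_k) = [\pi_{a,b}]_{\{M, \ldots, N-1\}}$.

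Start each $\tilde X^{(M,N)}$ in its stationary distribution with some fixed normalization (e.g., $\tilde Y_M^{(M,N)}(0) = 0$). A preliminary step --- based on Borel--Cantelli and Chebyshev applied to the independent exponentials $Z_n$ under $\pi_{a,b}$ and the Gaussian decay of $e^{-\alpha \Phi_n(Z)^2}$ --- shows $\pi_{a,b}(\mathcal V) = 1$, so the two-sided infinite system $X$ with $Z(0) \sim \pi_{a,b}$ exists and is uniquely determined in law by Theorem~\ref{thm:existence}. For any fixed finite window $I \subseteq \MZ$ and any $T > 0$, once $|M|$ and $N$ are large the modified drifts agree with $(g_n)$ on a large neighborhood of $I$; adapting the proof of Lemma~\ref{lemma:approx} --- whose core is the finite-range-of-interaction phenomenon that a particle in a bounded spatial interval feels only finitely many neighbors on $[0,T]$ --- then gives weak convergence of $[\tilde X^{(M,N)}]_I$ to $[X]_I$ in $C([0,T], \BR^{|I|})$. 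Since the finite gap process is stationary with law $\bigotimes_{k=M}^{N-1}\Exp(\la_k)$ at every $t$, this convergence forces $[Z(t)]_I \sim \bigotimes_{k \in I}\Exp(\la_k)$ for every $t \ge 0$ and every finite window $I$, whence $Z(t) \sim \pi_{a,b}$ for all $t$.

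The main technical hurdle I foresee is the extension of Lemma~\ref{lemma:approx} to the modified systems: the boundary drifts $\tilde g_M^{(M,N)}$ and $\tilde g_N^{(M,N)}$ grow linearly in $|M|$ and $N$, so one must rule out that the extreme particles reach the window $I$ within time $T$. However, under $\tilde \pi^{(M,N)}$ the boundary particles sit at expected distance from $I$ that diverges with $|M|$ and $N$, and the comparison Lemma~\ref{lemma:comp} (used to sandwich $\tilde X^{(M,N)}$ between auxiliary two-sided systems with outward-pointing drifts at the boundary), together with standard displacement estimates from the Appendix, should produce the required uniform-in-$(M,N)$ tightness and convergence.
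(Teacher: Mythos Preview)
Your overall strategy---approximate by finite systems in their explicit product-of-exponentials stationary gap distributions, then pass to the limit as in Lemma~\ref{lemma:approx}---is exactly the paper's. The genuine gap is in your choice of the approximating finite systems. By forcing the finite stationary rates to coincide with $(\la_k)$ on the \emph{entire} index range $\{M,\ldots,N-1\}$, you are left with boundary drifts $\tilde g_M^{(M,N)}$, $\tilde g_N^{(M,N)}$ that grow linearly in $|M|$ and $N$. You flag this yourself, but the proposed fix does not close the gap: Lemma~\ref{lemma:comp} compares two-sided systems with \emph{identical} drift sequences and cannot sandwich finite systems whose drifts vary with $(M,N)$; and the ``diverging distance'' heuristic fails quantitatively, since under $\pi_{a,b}$ the expected distance $\sum_{k=0}^{N-1}\la_k^{-1}$ from the window to $Y_N$ typically grows only like $\log N$ (e.g.\ in Example~3 with $b\in(-2,0)$, where $\la_k\sim(2+b)k$), while the inward boundary drift is of order $N$. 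The tightness machinery behind Lemma~\ref{lemma:approx} (bounded drift/diffusion in~\eqref{eq:bounded-coefficients}, then \cite[Lemma~7.4]{MyOwn6}) breaks down once the named particle in the window can acquire an $O(N)$ drift by reaching the boundary rank.

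The paper's remedy is precisely to avoid this: rather than matching $(\la_k)$ on all of $\{M_j,\ldots,N_j-1\}$, match only on the central block $\{-j,\ldots,j\}$ and insert a \emph{buffer zone} on each side where the rates are allowed to decay linearly to zero (so $\la_k^{(j)}=\la_j\cdot(N_j-k)/(N_j-j)$ for $j\le k\le N_j$, etc.). With the buffer width $N_j-j$ chosen large compared to $\la_j$ (the paper takes $N_j\ge j+\la_j$), the constant tail drifts $c_j^{\pm}$ stay uniformly bounded; this is the content of Lemma~\ref{lemma:bounds}, yielding~\eqref{eq:g-bdd}. With bounded drifts in hand, the tightness of $X_i^{(j)}$ and $Y_k^{(j)}$ (Lemmata~\ref{lemma:new-tight-X},~\ref{lemma:new-tight-Y}) goes through exactly as in Lemma~\ref{lemma:approx}, and the rest of your argument---passing to the limit and reading off $Z(t)\sim\pi_{a,b}$---is then correct. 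Your verification that $\pi_{a,b}$ is supported on $\mathcal V$ is also needed and appears in the paper as Lemma~\ref{lemma:support}.
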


\begin{rmk} Define the following set:
$$
\Sigma := \{(a, b) \in \BR^2\,\mid\, \forall n \in \MZ,\ \la_n > 0\}.
$$
Take a probability measure $\rho$ on $\Sigma$. Then the following mixture of measures $\pi_{a, b},\, (a, b) \in \Sigma$:
$$
\int_{\Sigma}\pi_{a, b}\,\md\rho(a, b)
$$
is also a stationary gap distribution. 
\label{rmk:mixture}
\end{rmk}

Similarly to \cite[Conjecture 1.3]{MyOwn13} for one-sided systems, we can state a conjecture which is a converse to Remark~\ref{rmk:mixture}. 

\begin{conj} Under conditions of Theorem~\ref{thm:stationary}, every stationary gap distribution of the two-sided infinite system can be represented as in Remark~\ref{rmk:mixture}. 
\end{conj}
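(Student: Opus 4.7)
The plan is to reduce to extremal stationary gap distributions, derive a discrete Poisson balance that forces their local-time intensities at the faces to lie in a two-parameter family indexed by $(a,b)\in\Sigma$, and finally argue that these intensities pin down the stationary law uniquely and that the unique law is $\pi_{a,b}$. Let $\mathcal K$ denote the convex set of stationary gap distributions supported on $\mathcal V$. In a suitable measurable setup, a Choquet-type decomposition writes any $\pi\in\mathcal K$ as a mixture of extremal points of $\mathcal K$, which coincide with the time-shift ergodic stationary distributions of the gap process; hence it suffices to show every extremal $\pi$ equals $\pi_{a,b}$ for some $(a,b)\in\Sigma$.

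Next, fix such an extremal $\pi$ and sample $Z(0)\sim \pi$. Subtracting consecutive identities of Lemma~\ref{lemma:ranked} (with $\si_n\equiv 1$) gives
\begin{equation*}
Z_k(t) - Z_k(0) = (g_{k+1}-g_k)t + B_{k+1}(t) - B_k(t) + L_{(k,k+1)}(t) - \tfrac12 L_{(k-1,k)}(t) - \tfrac12 L_{(k+1,k+2)}(t).
\end{equation*}
Stochastic comparison via Lemma~\ref{lemma:comp} with an explicit $\pi_{a',b'}\succeq\pi$ supplies uniform-in-$t$ $L^1$ control on $Z_k(t)$. Taking expectations and using time-stationarity of $Z_k$, one obtains $E[L_{(k,k+1)}(t)]=\mu_k t$ for deterministic $\mu_k\ge0$ satisfying
\begin{equation*}
\mu_{k+1} - 2\mu_k + \mu_{k-1} = 2(g_{k+1}-g_k),\qquad k\in\MZ.
\end{equation*}
The general solution of this discrete Poisson equation is exactly $\mu_k = 2\Phi_{k+1}(g) + a + bk$ for some $(a,b)\in\BR^2$; strict positivity of $\mu_k$ then forces $(a,b)\in\Sigma$. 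Ergodicity of the stationary gap process under $\pi$ upgrades this to the a.s. convergence $L_{(k,k+1)}(t)/t\to \mu_k$.

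The remaining and principal step is to show that the vector of intensities $(\mu_k)_{k\in\MZ}$ determines the extremal stationary distribution and that the distribution it pins down is $\pi_{a,b}$. The first approach I would try is a monotone coupling: with $\bar Z(0)\sim \pi_{a,b}$, start auxiliary gap processes from $Z^-(0)=Z(0)\wedge \bar Z(0)$ and $Z^+(0)=Z(0)\vee \bar Z(0)$ and use Lemma~\ref{lemma:comp} to sandwich both $Z$ and $\bar Z$; since the two processes share identical local-time rates at every face, one hopes to show that the sandwich collapses in distribution as $t\to\infty$ and thereby force $\pi=\pi_{a,b}$. A complementary route is a basic adjoint relationship for the infinite-dimensional reflected diffusion governing $Z$, derived via a limit from finite $N$ through Lemma~\ref{lemma:approx} and tested against exponential functionals $\exp(-\sum c_n z_n)$; once $(\mu_k)$ is fixed, the algebraic relations that result should uniquely select the product-exponential law with rates $\mu_k=\la_k$.

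The genuine obstacle — and the reason the analogous one-sided conjecture of \cite{MyOwn13} is still open — is precisely this last uniqueness step: infinite-dimensional reflecting diffusions need not be uniquely stationary given only their boundary intensities, and the attractivity of $\pi_{a,b}$ among distributions with matching intensities is not obvious. Bridging from $(\mu_k)$ to the full product-exponential structure almost certainly demands an additional input, such as a tail-triviality or factorization statement specific to rank-based Brownian dynamics, or a Harris-type exponential ergodicity on each level set $\{(\mu_k)\text{ fixed}\}$; identifying the right such input is where the real work of the conjecture lies.
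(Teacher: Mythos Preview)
The paper does not prove this statement: it is presented as an open \emph{conjecture}, explicitly by analogy with the still-open one-sided conjecture of \cite{MyOwn13}. There is therefore no proof in the paper to compare your proposal against.

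Your own proposal is candid about this, and you locate the essential gap correctly: the passage from the local-time intensities $(\mu_k)_{k\in\MZ}$ to the full law $\pi=\pi_{a,b}$ is the heart of the matter, and neither the monotone-sandwich idea nor the basic-adjoint-relation idea you sketch is known to close it. A few of the earlier steps also need more than you have written. The Choquet decomposition of $\mathcal K$ requires a topology and a compactness or simplex structure on the set of stationary laws that you have not supplied in this non-locally-compact, infinite-dimensional setting. The claimed domination $\pi\preceq\pi_{a',b'}$ for some $(a',b')$ is not given a priori; the paper's Theorem~\ref{thm:no-unit-diffusions}(b) provides domination by $\pi^{(\infty)}$ only under Assumption~1, which is not among the hypotheses of Theorem~\ref{thm:stationary}. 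And the strict positivity $\mu_k>0$ (rather than $\mu_k\ge 0$), needed to land in $\Sigma$, is asserted without argument.

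In short, what you have is a plausible strategic outline together with an accurate diagnosis of why it does not yet constitute a proof; the conjecture remains open.
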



\begin{rmk} Every sequence $\la = (\la_n)_{n \in \MZ}$ from~\eqref{eq:rates} is a solution to the following difference equation:
\begin{equation}
\label{eq:difference}
\frac12\la_{n-1} - \la_n + \frac12\la_{n+1} = g_{n+1} - g_n,\ \ n \in \MZ.
\end{equation}
The converse is also true: Every solution to the difference equation~\eqref{eq:difference} has the form~\eqref{eq:rates} for some $a, b \in \BR$. 
\end{rmk}

\begin{exm} Let $g_n = 0$ for all $n$. In this case, $\Phi_n(g) = 0$ for all $n$. Therefore, $\la_n$ from~\eqref{eq:rates} satisfy $\la_n > 0$ for every $n \in \MZ$, if and only if $b = 0,\, a > 0$. This gives us $\la_n = a$ for all $n$. The stationary gap distributions have the form 
$$
\pi_a = \bigotimes_{n \in \MZ}\Exp(a),\ a > 0.
$$
This is actually a well-known result. Indeed, the gap distribution $\pi_a$ corresponds to the Poisson point process on the real line with intensity $a\,\md x$. But this Poisson point process is preserved under Brownian dymanics.
\end{exm}

\begin{exm} More generally, assume $\sum_{n \in \MZ}|g_n| < \infty$. Then the sequence $\Phi(g) = (\Phi_n(g))_{n \in \MZ}$ is bounded. Conditions of Theorem~\ref{thm:existence} (b) are satisfied, because $g_{\infty} = 0$, and $\sum|g_n| < \infty$ implies $\sum g_n^2 < \infty$. Similarly to Example 1, we have $\la_n > 0$ for all $n \in \MZ$, if and only if $b = 0,\, a > -\Phi_n(g)$ for all $n \in \MZ$. As in Example 1, we have a one-parameter family of stationary gap distributions.
\end{exm}

\begin{exm} Take the following drift coefficients:
$$
g_n = 
\begin{cases}
1,\ n \ge 1,\\ 
0,\ n \le 0.
\end{cases}
$$
Then $\Phi_{n+1}(g) = n\vee 0$, and $\la_n = a + bn + 2(n\vee 0)$ for $n \in \MZ$. We have: $\la_n > 0$ for $n \in \MZ$ if and only if $a > 0,\, b \in [-2, 0]$. In contrast with Examples 1 and 2, here we have a two-parameter family of stationary gap distributions. 
\end{exm}

\begin{exm} Take the following drift coefficients:
$$
g_n = 
\begin{cases}
1,\ n \le 0,\\
0,\ n \ge 1.
\end{cases}
$$
Then, similarly to Example 3, $\la_n = a + bn + 2(n\wedge 0)$ for $n \in \MZ$. There do not exist $a, b$ such that $\la_n > 0$ for all $n$. In other words, the set $\Sigma$ is empty: $\Sigma = \varnothing$. This is not accidental: In fact, as we shall see later, this system does not have any stationary gap distributions at all: regardless of the initial conditions, $Z(t)$ weakly converges to zero as $t \to \infty$. 
\end{exm}

\subsection{Long-term behavior of the gap process for general diffusions} In this subsection, we do {\it not} assume~\eqref{eq:unit-diffusions}. For $M < N$, define the following quantity:
$$
\ol{g}[M:N] := \frac1{N-M+1}\left(g_M + \ldots + g_N\right).
$$ 

\begin{asmp} There exists an approximative sequence $(M_j, N_j)_{j \ge 1}$ such that 
$$
\ol{g}[M_j:k] > \ol{g}[M_j:N_j],\ \ k = M_j, \ldots, N_j - 1,\ \ j \ge 1.
$$
\end{asmp}

Consider a system $X^{(M_j, N_j)}$ as in~\eqref{eq:finite-X}, but without a given initial condition. It follows from~\eqref{eq:stability-intro} that, under Assumption 1, the system $X^{(M_j, N_j)}$ has a unique stationary gap distribution. Denote this distribution by $\pi^{(j)}$; this is a probability measure on $\BR_+^{N_j - M_j}$. 

\begin{lemma}
Take a $j \ge 1$ and a subset $I \subseteq \{M_j, \ldots, N_j - 1\}$. For $j' > j$, the marginals of stationary gap distributions $\pi^{(j)},\, \pi^{(j')}$ satisfy 
$$
\bigl[\pi^{(j')}\bigr]_I  \preceq \bigl[\pi^{(j)}\bigr]_I.
$$
\label{lemma:comparison-of-stationary-gap-distributions}
\end{lemma}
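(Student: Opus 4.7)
The plan is to construct a synchronous coupling of the two finite systems $X^{(M_j, N_j)}$ and $X^{(M_{j'}, N_{j'})}$, establish a pathwise domination of their gap processes on the common index range $\{M_j, \ldots, N_j-1\}$, and then let $t \to \infty$ to transfer this transient comparison into one between the stationary distributions.

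First, I would place both systems on one probability space. Realize $X^{(M_{j'}, N_{j'})}$ in its stationary regime, so that its gap process satisfies $\hat Z(0) \sim \pi^{(j')}$, with ranked particles $\hat Y_k$, $k = M_{j'}, \ldots, N_{j'}$, and with i.i.d.\ driving Brownian motions $B_k$, $k = M_{j'}, \ldots, N_{j'}$, obtained from the finite-system analogue of~\eqref{eq:ranked} (see Remark~\ref{rmk:ranked-general}). On the same space, realize $X^{(M_j, N_j)}$ with ranked particles $Y_k$, $k = M_j, \ldots, N_j$, starting from the matched initial condition $Y_k(0) := \hat Y_k(0)$ for $k = M_j, \ldots, N_j$ and driven by the same Brownian motions $B_{M_j}, \ldots, B_{N_j}$; denote its gap process by $Z$.

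The central step is the pathwise inequality
\[
Z_k(t) \ge \hat Z_k(t), \qquad t \ge 0,\quad k = M_j, \ldots, N_j - 1.
\]
On the common index range, the two gap processes obey identical interior SDEs (same drifts, diffusions, and driving Brownian motions), and their reflection local times at interior gaps are linked in the same way. The only discrepancy is at the two boundary gaps: $\hat Z_{M_j}$ carries an extra nondecreasing downward push $-\tfrac12\,\md \hat L_{(M_j - 1, M_j)}$ coming from collisions between the $(M_j - 1)$th and $M_j$th ranked particles of the larger system, and similarly $\hat Z_{N_j - 1}$ carries an extra downward push $-\tfrac12\,\md \hat L_{(N_j, N_j + 1)}$; both terms are absent from $Z$. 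Starting from the matched initial condition, a Skorokhod-type comparison of reflected SDEs with extra one-sided inward pushes---the same monotonicity that underlies Lemma~\ref{lemma:comp} (and which is given for finite systems in \cite[Corollary~3.11]{MyOwn6})---yields the pathwise inequality.

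Finally, fix $I \subseteq \{M_j, \ldots, N_j - 1\}$. The pathwise comparison gives $[Z(t)]_I \succeq [\hat Z(t)]_I$ for all $t \ge 0$, and hence in distribution. By stationarity of the larger system, $[\hat Z(t)]_I \sim [\pi^{(j')}]_I$ for all $t$. Assumption~1 provides the stability condition~\eqref{eq:stability-intro} for $X^{(M_j, N_j)}$, so by \cite{5people, PP2008, MyOwn6} one has $Z(t) \Ra \pi^{(j)}$ regardless of initial law. Since stochastic domination is preserved under weak convergence, letting $t \to \infty$ gives $[\pi^{(j')}]_I \preceq [\pi^{(j)}]_I$. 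The hard part is the pathwise comparison in the middle step: Lemma~\ref{lemma:comp} handles two copies of the \emph{same} system, whereas here the processes live on orthants of different dimensions and are coupled only through the common driving noises and matched initial data, with additional boundary local times existing only in the larger system. A careful accounting of these boundary local-time terms at $k = M_j$ and $k = N_j - 1$ is required; alternatively one can proceed by induction, adding one extra particle at a time from either end.
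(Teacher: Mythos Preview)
Your argument is correct and follows essentially the same route as the paper: a pathwise comparison of the gap processes of the smaller and larger finite systems (the smaller one being obtained by deleting particles from both ends of the larger one), followed by passing to the limit $t\to\infty$ and using preservation of stochastic domination under weak limits. The only cosmetic difference is that the paper starts \emph{both} systems from the deterministic initial condition $X_i(0)=0$ and lets both relax to stationarity, whereas you start the larger system already in its stationary regime and match the smaller system's initial positions to it; either setup works once one has the finite-system ``removing particles increases gaps'' comparison, which is \cite[Corollary~3.10]{MyOwn2} (not Corollary~3.11 of \cite{MyOwn6}, which concerns two copies of the same system).
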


Therefore, we can couple all these stationary distributions: take random variables 
$$
(z^{(j)}_{M_j}, \ldots, z^{(j)}_{N_j - 1}) \sim \pi^{(j)},\ \ j \ge 1,
$$
so that the following comparison holds a.s.:
$$
z_k^{(j)} \ge z_k^{(j+1)}\ \mbox{for}\ M_j \le k < N_j,\, j \ge 1.
$$
For every $k \in \MZ$, define the limits 
$$
z_k^{(\infty)} := \lim\limits_{j \to \infty}z^{(j)}_k \in \BR_+.
$$
Denote by $\pi^{(\infty)}$ the distribution of the random vector $(z^{(\infty)}_k)_{k \in \MZ}$ in $\BR_+^{\MZ}$. 

\begin{rmk} We also note that this limiting distribution is independent of the approximative sequence $(M_j, N_j)$: If we take two different approximative sequences $(M_j, N_j)$ and $(\tilde{M}_j, \tilde{N}_j)$, each satisfying Assumption 1, then the resulting limiting distributions $\pi^{(\infty)}$ and $\tilde{\pi}^{(\infty)}$ are the same: $\pi^{(\infty)} = \tilde{\pi}^{(\infty)}$. The proof is similar to that of \cite[Lemma 4.2]{MyOwn6} and is omitted. 
\end{rmk}

Take any copy $X = (X_n)_{n \in \MZ}$ of a two-sided infinite system of competing Brownian particles with drift coefficients $g_n,\, n \in \MZ$, and diffusion coefficients $\si_n^2,\, n \in \MZ$, starting from any initial conditions. Let $Z = (Z(t),\, t \ge 0)$ be the gap process.

\begin{thm}
\label{thm:no-unit-diffusions}
Under Assumption 1, 

\medskip

(a) the family of $\BR_+^{\MZ}$-valued random variables $Z = (Z(t), t \ge 0)$ is tight;

\medskip

(b) all weak limit points of $Z(t)$ as $t \to \infty$ and all stationary gap distributions are stochastically dominated by the measure $\pi^{(\infty)}$.
\end{thm}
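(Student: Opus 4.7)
The plan is to combine the approximation of the infinite system by finite systems (Lemma~\ref{lemma:approx}) with the stability of each finite subsystem under Assumption 1 and the monotonicity of stationary distributions in the system size (Lemma~\ref{lemma:comparison-of-stationary-gap-distributions}). Under Assumption 1, each finite subsystem $X^{(M_j, N_j)}$ satisfies the stability condition~\eqref{eq:stability-intro}, so its gap process $Z^{(M_j, N_j)}(t)$ converges weakly to the unique stationary gap distribution $\pi^{(j)}$ as $t \to \infty$, regardless of initial conditions. By construction, the marginals $[\pi^{(j)}]_I$ decrease stochastically in $j$ to $[\pi^{(\infty)}]_I$ for every finite $I \subseteq \MZ$.

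The central ingredient, from which both (a) and (b) will follow, is the pathwise comparison: if the two-sided infinite system $X$ and the finite system $X^{(M_j, N_j)}$ are coupled so as to share driving Brownian motions and the same ranked initial positions $(Y_{M_j}(0), \ldots, Y_{N_j}(0))$, then for every $t \ge 0$ and every finite $I \subseteq \{M_j, \ldots, N_j - 1\}$,
$$
[Z(t)]_I \preceq [Z^{(M_j, N_j)}(t)]_I.
$$
Heuristically, the particles $Y_{M_j - 1}$ and $Y_{N_j + 1}$ (present only in the infinite system) contribute via Lemma~\ref{lemma:ranked} local-time pushes on $Y_{M_j}$ upward and on $Y_{N_j}$ downward, both of which can only squeeze the window together. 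To prove this rigorously, I would view the finite system $X^{(M_j, N_j)}$ as the limit, as $L \to \infty$, of two-sided infinite systems whose ranked initial positions outside $[M_j, N_j]$ are placed at $\pm L$: by Lemma~\ref{lemma:ranked}(a) the exterior particles of such an auxiliary system do not visit a bounded window within any fixed time horizon with probability tending to $1$, so the auxiliary dynamics on the window reduce to the finite system in the limit. Lemma~\ref{lemma:comp}, applied to the original infinite system and the auxiliary one (for finite $L$), supplies the stochastic ordering pre-limit.

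Granting the comparison, the two claims follow quickly. For (a), tightness of $Z(t)$ in $\BR_+^{\MZ}$ with the product topology is equivalent to tightness of each finite-dimensional marginal; since $[Z(t)]_I \preceq [Z^{(M_j, N_j)}(t)]_I$ uniformly in $t$, and the right side is tight in $t$ by its weak convergence to $\pi^{(j)}$, the left side is tight as well. For (b), let $\mu$ be any weak subsequential limit of $Z(t_n)$ along $t_n \to \infty$, or any stationary gap distribution of the infinite system. Passing to the limit $n \to \infty$ in the comparison yields $[\mu]_I \preceq [\pi^{(j)}]_I$ for every $j$, and then $j \to \infty$ yields $[\mu]_I \preceq [\pi^{(\infty)}]_I$ for every finite $I \subseteq \MZ$, which is equivalent to $\mu \preceq \pi^{(\infty)}$.

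The main obstacle is the rigorous justification of the pathwise comparison. The delicate point is that in the infinite system, particles with ranks just outside $[M_j, N_j]$ can collide with window particles and reshuffle their ranks, so the coupling with the finite system must be constructed carefully. The reduction to Lemma~\ref{lemma:comp} via the auxiliary system with exterior initial conditions at $\pm L$ is natural, but the passage to the limit $L \to \infty$ requires a quantitative control of the boundary local times $L_{(M_j - 1, M_j)}$ and $L_{(N_j, N_j + 1)}$ as $L \to \infty$; for this step Lemma~\ref{lemma:ranked}(a) is the essential tool. Once this technical comparison is in place, the remainder of the argument is a clean interchange of weak limits in $t$ and $j$.
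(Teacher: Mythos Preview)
Your overall strategy is the same as the paper's: establish the gap comparison
\[
[Z(t)]_I \preceq [Z^{(M_j,N_j)}(t)]_I\quad\text{for every finite } I\subseteq\{M_j,\ldots,N_j-1\}\text{ and every }t\ge 0,
\]
then deduce (a) from tightness of the finite system's gap process (which converges to $\pi^{(j)}$), and (b) by passing to the limit first in $t$ and then in $j$, using that stochastic domination is preserved under weak limits.

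The only real difference is in how you obtain the comparison itself. The paper does not go through your auxiliary two-sided systems with exterior particles at $\pm L$; it simply invokes the ``removing particles increases gaps'' result \cite[Corollary 3.10]{MyOwn2}: the finite system $X^{(M_j,N_j)}$ is obtained from $X$ by deleting all particles with ranks below $M_j$ and above $N_j$, and such deletion can only enlarge the surviving gaps. Your detour via Lemma~\ref{lemma:comp} and the limit $L\to\infty$ is a valid heuristic reconstruction of that fact, but it is more delicate than you suggest: the exterior initial positions must lie in $\mathcal W$ (so they cannot literally all be placed at $\pm L$; you would need something like the original exterior configuration shifted outward by $L$), and Lemma~\ref{lemma:comp} is stated under the hypotheses of Theorem~\ref{thm:existence}, which are explicitly \emph{not} assumed in Theorem~\ref{thm:no-unit-diffusions}. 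The direct pathwise comparison from \cite{MyOwn2} sidesteps both of these wrinkles, so there is no need for the $L\to\infty$ machinery or the local-time control you flag as the main obstacle.
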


In Theorem~\ref{thm:no-unit-diffusions}, we do {\it not} impose assumptions of Theorem~\ref{thm:existence}. If we do impose them, we can get some additional results.

\begin{thm}
\label{thm:conv-general}
Under Assumption 1 and conditions of Theorem~\ref{thm:existence}, 

\medskip

(a) if $\pi^{(\infty)}$ is supported on $\mathcal V$, then $\pi^{(\infty)}$ is a stationary gap distribution;

\medskip

(b) if, in addition, $\pi^{(\infty)} \preceq Z(0)$, then $Z(t) \Ra \pi^{(\infty)}$ as $t \to \infty$. 
\end{thm}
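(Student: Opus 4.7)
The plan is to prove (a) by realizing the two-sided infinite system starting from gap $\pi^{(\infty)}$ as the pathwise monotone limit of the stationary finite systems $X^{(j)}$, and then to derive (b) cleanly from (a), Lemma~\ref{lemma:comp}, and Theorem~\ref{thm:no-unit-diffusions}.

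For (a): on a single probability space, use the coupling preceding Lemma~\ref{lemma:comparison-of-stationary-gap-distributions} to realize $(z^{(j)}_k)_{M_j \le k < N_j} \sim \pi^{(j)}$ with $z^{(j)}_k \searrow z^{(\infty)}_k$ a.s. for each $k \in \MZ$, and set $Z(0) := (z^{(\infty)}_k) \sim \pi^{(\infty)}$ with ranked positions $Y_n(0) := \Phi_n(Z(0))$. The support hypothesis gives $Y(0) \in \mathcal W$ a.s., so by Theorem~\ref{thm:existence} the infinite system $X$ from this initial condition exists and is unique in law. Run each $X^{(j)}$ stationarily from $\pi^{(j)}$ using driving Brownian motions shared with $X$. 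The easy direction $Z(t) \preceq \pi^{(\infty)}$ is obtained by introducing an auxiliary finite system $\tilde X^{(j)}$ started from the restriction of $Z(0)$: Lemma~\ref{lemma:comp} applied pathwise gives $\tilde Z^{(j)}(t) \le Z^{(j)}(t)$ coordinate-wise a.s., Lemma~\ref{lemma:approx} (applied conditionally on $X(0)$ and then integrated) gives $[\tilde Z^{(j)}(t)]_I \Ra [Z(t)]_I$, and $Z^{(j)}(t) \sim \pi^{(j)} \Ra \pi^{(\infty)}$ on finite marginals, which jointly yield $[Z(t)]_I \preceq [\pi^{(\infty)}]_I$ after testing against bounded coordinate-wise increasing continuous cylindrical $f$. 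For the reverse, I would exploit pathwise coordinate-wise monotonicity of $(X^{(j)})_j$ under the shared-Brownian-motion coupling with nested initial conditions, extract the a.s. coordinate-wise limit $X^{(\infty)}(t) := \lim_j X^{(j)}(t)$, verify that $X^{(\infty)}$ is a version of the two-sided infinite system satisfying~\eqref{eq:SDE-main} and Definition~\ref{defn:CBP}, and use uniqueness in law from Theorem~\ref{thm:existence} to conclude $X^{(\infty)} \stackrel{d}{=} X$. Monotone convergence applied to $Z^{(j)}(t) \sim \pi^{(j)}$ with $Z^{(j)}(t) \searrow Z^{(\infty)}(t)$ a.s. then shows the gap of $X^{(\infty)}(t)$ has distribution $\pi^{(\infty)}$, and hence so does $Z(t)$ for every $t \ge 0$.

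For (b): by (a), $\pi^{(\infty)}$ is a stationary gap distribution. Couple an auxiliary infinite system $\ol X$ with $\ol Z(0) \sim \pi^{(\infty)}$ and $\ol Z(0) \preceq Z(0)$ pathwise, which is possible since $\pi^{(\infty)} \preceq Z(0)$ by hypothesis. Lemma~\ref{lemma:comp}(b) then gives $\ol Z(t) \preceq Z(t)$ for all $t \ge 0$, and $\ol Z(t) \sim \pi^{(\infty)}$ by (a), so $\pi^{(\infty)} \preceq Z(t)$ in distribution uniformly in $t$. By Theorem~\ref{thm:no-unit-diffusions}(b), every subsequential weak limit $\nu$ of $Z(t)$ as $t \to \infty$ satisfies $\nu \preceq \pi^{(\infty)}$; combining with the above gives $\nu = \pi^{(\infty)}$, and tightness from Theorem~\ref{thm:no-unit-diffusions}(a) upgrades this to $Z(t) \Ra \pi^{(\infty)}$.

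The main obstacle lies in part (a), specifically in identifying the pathwise coordinate-wise limit $X^{(\infty)}$ with a bona fide version of the two-sided infinite system. Two interlocking issues arise: first, establishing pathwise coordinate-wise monotonicity of $(X^{(j)})_j$ across finite systems of different sizes under the shared-Brownian-motion coupling, which requires extending the comparison in Lemma~\ref{lemma:comp} to handle $X^{(j)}$ versus $X^{(j+1)}$ despite their differing boundaries; and second, verifying that $X^{(\infty)}$ inherits the defining SDEs and the rankability conditions of Definition~\ref{defn:CBP}. This is more delicate than in the one-sided setting of \cite{MyOwn6} because there is no anchoring bottom particle, so boundary effects from both $M_j \to -\infty$ and $N_j \to +\infty$ must be controlled simultaneously.
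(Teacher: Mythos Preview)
Your proof of part (b) is correct and matches the paper's argument essentially verbatim: couple with a stationary copy $\ol X$, use Lemma~\ref{lemma:comp}(b) to get $\pi^{(\infty)} \preceq Z(t)$, combine with the upper bound from Theorem~\ref{thm:no-unit-diffusions}(b), and invoke tightness.

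For part (a), your approach differs from the paper's and contains a genuine gap that you yourself flag. The paper does not attempt a two-sided sandwich on $Z(t)$, nor does it pass to a pathwise limit of the $X^{(j)}$. Instead, it proves directly that the stationary finite systems $\tilde X^{(j)}$ (started with gaps $\sim \pi^{(j)}$) converge weakly to the two-sided infinite system $X$ started with gaps $\sim \pi^{(\infty)}$, by rerunning the machinery behind Lemma~\ref{lemma:approx} with random initial conditions that converge monotonically. The only new ingredients are modified versions of Lemmata~\ref{lemma:comp-0} and~\ref{lemma:set-of-names} (the paper's Lemmata~\ref{lemma:modified-comp-new} and~\ref{lemma:modified-set-of-names-new}), which control the ranked particles and the set of names uniformly in $j$ when the initial gaps are $z^{(j)}$ rather than fixed. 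Once weak convergence $[\tilde Z^{(j)}(t)]_I \Ra [Z(t)]_I$ is established and combined with $\tilde Z^{(j)}(t) \sim \pi^{(j)} \Ra \pi^{(\infty)}$ on finite marginals, the identity $Z(t) \sim \pi^{(\infty)}$ follows immediately, with no sandwich needed.

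Your proposed route for the reverse inequality, via pathwise coordinate-wise monotonicity of $(X^{(j)})_j$ under shared driving Brownian motions, is problematic for the reasons you state and one more: the comparison results in \cite{MyOwn2} yield monotonicity for gaps or for ranked particles under specific operations (removing top or bottom particles, shifting initial conditions one way), but named particles $X^{(j)}_i$ are not monotone in $j$ when particles are added at both ends simultaneously, and the shared-Brownian-motion coupling is tied to names, not ranks. Even if one restricts attention to gaps and obtains $Z^{(j)}(t) \searrow Z^{(\infty)}(t)$ pathwise, identifying $Z^{(\infty)}$ with the gap process of a bona fide two-sided system satisfying Definition~\ref{defn:CBP} amounts to redoing the weak-convergence argument of Lemma~\ref{lemma:approx} anyway. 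The paper's direct route is both shorter and avoids these issues; your ``easy direction'' already contains the core idea (Lemma~\ref{lemma:approx} conditionally on $X(0)$), and the fix is simply to apply that same weak-convergence argument to the $\pi^{(j)}$-stationary systems themselves rather than to the auxiliary $\tilde X^{(j)}$.
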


If~\eqref{eq:unit-diffusions} does not hold, then generally we do not know an explicit formula for $\pi^{(j)}$ and $\pi^{(\infty)}$. However, under condition~\eqref{eq:unit-diffusions}, we can show more explicit results. The next subsection is devoted to this. 

\subsection{Long-term behavior of the gap process for unit diffusions} In this subsection, we assume~\eqref{eq:unit-diffusions}. This allows us to get more explicit results than in the previous subsection. Without loss of generality, assume $j_0 = 1$. Let
$$
\la_k^{(j)} := 2(k - M_j + 1)\left(\ol{g}[M_j:k] - \ol{g}[M_j:N_j]\right),\ \ M_j \le k < N_j,\ j \ge 1.
$$ 
Under Assumption 1, these quantities are all positive: 
$$
\la_k^{(j)} > 0,\ M_j \le k < N_j,\ j \ge 1.
$$
Under Assumption 1 and~\eqref{eq:unit-diffusions}, the formula~\eqref{eq:product-form} gives us
$$
\pi^{(j)} = \bigotimes\limits_{k = M_j}^{N_j - 1}\Exp\bigl(\la_k^{(j)}\bigr),\, j \ge 1.
$$

\begin{lemma} Each sequence $(\la^{(j)}_k)_{j \ge j_k}$ is nondecreasing. Consider the limits
\begin{equation}
\label{eq:limits}
\la_k^{(\infty)} := \lim\limits_{j \to \infty}\la^{(j)}_k \in (0, \infty],\ \ k \in \MZ.
\end{equation}
Then either all $\la_k^{(\infty)},\, k \in \MZ$, are finite, or all are infinite. 
\label{lemma:rate-limits}
\end{lemma}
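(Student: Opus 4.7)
The plan is to handle the two assertions separately, using Lemma~\ref{lemma:comparison-of-stationary-gap-distributions} for the monotonicity and a direct telescoping identity for the dichotomy.

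For the monotonicity of $(\la^{(j)}_k)_{j \ge j_k}$ in $j$, I would apply Lemma~\ref{lemma:comparison-of-stationary-gap-distributions} to the singleton $I = \{k\}$, which is admissible once $j \ge j_k$. Since $\pi^{(j)} = \bigotimes_{m=M_j}^{N_j - 1} \Exp(\la_m^{(j)})$, the $k$-th marginal equals $\Exp(\la_k^{(j)})$, and the stochastic order on exponential distributions reverses the order on rates: $\Exp(\mu)\preceq \Exp(\nu) \Leftrightarrow \mu \ge \nu$. Thus the domination $[\pi^{(j')}]_{\{k\}} \preceq [\pi^{(j)}]_{\{k\}}$ for $j' > j$ becomes $\la_k^{(j')} \ge \la_k^{(j)}$, so $\la_k^{(\infty)} \in (0, \infty]$ is well defined as a monotone limit.

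For the dichotomy, I would first derive by direct substitution into the definition of $\la_k^{(j)}$ the identity
\begin{equation*}
\la_{k+1}^{(j)} - \la_k^{(j)} = 2\bigl(g_{k+1} - c_j\bigr), \qquad c_j := \ol{g}[M_j:N_j],
\end{equation*}
and iterate it to obtain, for any fixed $k < k'$ in $\MZ$ and all sufficiently large $j$,
\begin{equation*}
\la_{k'}^{(j)} - \la_k^{(j)} = 2\sum_{m=k+1}^{k'} g_m - 2(k' - k)\, c_j.
\end{equation*}
The crucial feature is that the only $j$-dependence on the right-hand side is through the single scalar $c_j$. Suppose, towards a contradiction, that $\la_{k_0}^{(\infty)} < \infty$ for some $k_0$ while $\la_{k_1}^{(\infty)} = \infty$ for some $k_1 \ne k_0$. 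If $k_1 > k_0$, then $\la_{k_1}^{(j)} - \la_{k_0}^{(j)} \to \infty$, so the identity forces $c_j \to -\infty$ along a subsequence; plugging any fixed $k_2 < k_0$ back in, the identity yields $\la_{k_2}^{(j)} \to -\infty$ along that subsequence, contradicting the strict positivity $\la_{k_2}^{(j)} > 0$ from Assumption~1. The case $k_1 < k_0$ is symmetric: it forces $c_j \to +\infty$ along a subsequence, and then $\la_{k_2}^{(j)} \to -\infty$ for any fixed $k_2 > k_0$, another contradiction.

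Equivalently phrased, once one $\la_{k_0}^{(\infty)}$ is finite, Assumption~1 together with the telescoping identity pins $(c_j)$ into a bounded sequence; the identity then bounds $|\la_k^{(j)} - \la_{k_0}^{(j)}|$ uniformly in $j$ for every fixed $k$, so all $\la_k^{(\infty)}$ are finite. The only delicate step is this extraction of boundedness of $c_j$ from the strict positivity of the $\la_k^{(j)}$'s; after that, the conclusion is essentially bookkeeping.
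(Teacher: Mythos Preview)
Your monotonicity argument is the same as the paper's: both invoke Lemma~\ref{lemma:comparison-of-stationary-gap-distributions} on a singleton and read off $\la_k^{(j')}\ge\la_k^{(j)}$ from the reversal of order between exponential rates and stochastic domination.

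For the dichotomy, both you and the paper start from the same telescoping identity $\la_{k+1}^{(j)}-\la_k^{(j)}=2\bigl(g_{k+1}-\ol g[M_j{:}N_j]\bigr)$, but diverge in how the conclusion is extracted. The paper simply bounds $\sup_{j,k}\bigl|\la_{k+1}^{(j)}-\la_k^{(j)}\bigr|<\infty$ by appealing to $\sup_n|g_n|<\infty$ (via somewhat loose cross-references to~\eqref{eq:g-bdd},~\eqref{eq:average-bdd}), and then observes that adjacent limits are either both finite or both infinite. Your route is more self-contained: you use only the strict positivity $\la_k^{(j)}>0$ from Assumption~1 to pin down $(c_j)$ as bounded once a single $\la_{k_0}^{(\infty)}$ is finite, and then propagate finiteness to every $k$. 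This buys you something real: the lemma as stated assumes only Assumption~1 and unit diffusions, not the conditions of Theorem~\ref{thm:existence}, so boundedness of the drifts is not formally available, whereas your argument does not need it. One cosmetic point: in your contradiction version you can drop ``along a subsequence'' --- since $\la_{k_0}^{(j)}$ converges and $\la_{k_1}^{(j)}\to\infty$, the difference tends to $+\infty$ outright, forcing $c_j\to\mp\infty$ genuinely.
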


Therefore (understanding $\Exp(\infty) = \de_0$ to be the Dirac mass at zero), we get:
$$
\pi^{(\infty)} = \bigotimes\limits_{k \in \MZ}\Exp\bigl(\la_k^{(\infty)}\bigr).
$$

Depending on whether all $\la_k^{(\infty)}$ are finite or infinite, we get a different long-term behavior of $Z(t)$. The following result is a corollary of Theorems~\ref{thm:no-unit-diffusions} and~\ref{thm:conv-general}. 

\begin{thm} Under Assumption 1, condition~\eqref{eq:unit-diffusions}, and assumptions   of Theorem~\ref{thm:existence}, suppose $\la_k^{(\infty)} < \infty,\, k \in \MZ$. Then:

\medskip

(a) the sequence $(\la_k^{(\infty)})_{k \in \MZ}$ satisfies assumptions of Theorem~\ref{thm:stationary}, and therefore 
$$
\pi^{(\infty)} = \bigotimes\limits_{k \in \MZ}\Exp\bigl(\la^{(\infty)}_k\bigr)
$$
is a stationary gap distribution.

\medskip

(b) any weak limit point of $Z(t)$ as $t \to \infty$, as well as any other stationary gap distribution, is stochastically dominated by $\pi^{(\infty)}$;

\medskip

(c) if $\pi^{(\infty)} \preceq Z(0)$, then $Z(t) \Ra \pi^{(\infty)}$ as $t \to \infty$.
\label{thm:conv}
\end{thm}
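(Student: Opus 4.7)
The plan is to reduce the theorem to an algebraic identification: the finite-system rates $\la^{(j)}_k$ are already of the exact form~\eqref{eq:rates} from Theorem~\ref{thm:stationary}, and passing to the limit produces a particular parameter pair $(a^{(\infty)}, b^{(\infty)}) \in \BR^2$. Once that is done, (a) follows from Theorem~\ref{thm:stationary}, (b) is the statement of Theorem~\ref{thm:no-unit-diffusions}(b) with the limit distribution now identified explicitly, and (c) is Theorem~\ref{thm:conv-general}(b).

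For the first step, I would use the identity $\sum_{i=M_j}^{k} g_i = \Phi_{k+1}(g) - \Phi_{M_j}(g)$ (checked case-by-case from the three-part definition of $\Phi$) to rewrite
\[
\la^{(j)}_k = 2\bigl[\Phi_{k+1}(g) - \Phi_{M_j}(g)\bigr] - 2(k - M_j + 1)\,\ol{g}[M_j:N_j] = 2\Phi_{k+1}(g) + a^{(j)} + b^{(j)} k,
\]
with $b^{(j)} := -2\ol{g}[M_j:N_j]$ and $a^{(j)} := -2\Phi_{M_j}(g) + 2(M_j - 1)\ol{g}[M_j:N_j]$. Every finite-system rate sequence is therefore already in the form~\eqref{eq:rates}, with parameters $(a^{(j)}, b^{(j)})$ depending only on $M_j, N_j$ and the drifts.

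Next, I pass to the limit $j \to \infty$. Evaluating the displayed identity at $k = 0$ and $k = 1$ yields $a^{(j)} = \la^{(j)}_0 - 2\Phi_1(g)$ and $b^{(j)} = \la^{(j)}_1 - \la^{(j)}_0 - 2 g_1$; by hypothesis and Lemma~\ref{lemma:rate-limits}, the right-hand sides converge to finite limits, so $a^{(j)} \to a^{(\infty)}$ and $b^{(j)} \to b^{(\infty)}$ for some $a^{(\infty)}, b^{(\infty)} \in \BR$. For every $k \in \MZ$ this gives
\[
\la^{(\infty)}_k = 2\Phi_{k+1}(g) + a^{(\infty)} + b^{(\infty)} k,
\]
and strict positivity $\la^{(\infty)}_k > 0$ comes for free from the monotonicity $\la^{(\infty)}_k \ge \la^{(j_k)}_k$ in Lemma~\ref{lemma:rate-limits} together with Assumption 1. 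Applying Theorem~\ref{thm:stationary} with $(a, b) = (a^{(\infty)}, b^{(\infty)})$ gives (a) together with the fact that $\pi^{(\infty)}$ is supported on $\mathcal V$.

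Parts (b) and (c) are then pure citations. Part (b) is the conclusion of Theorem~\ref{thm:no-unit-diffusions}(b), now that the limit distribution has been identified explicitly. For (c), the support statement $\pi^{(\infty)}(\mathcal V) = 1$ from (a) verifies the hypothesis of Theorem~\ref{thm:conv-general}(a), and Theorem~\ref{thm:conv-general}(b) then yields $Z(t) \Ra \pi^{(\infty)}$ under the standing assumption $\pi^{(\infty)} \preceq Z(0)$. The only real obstacle is the sign bookkeeping in the identity $\sum_{i=M_j}^{k} g_i = \Phi_{k+1}(g) - \Phi_{M_j}(g)$, which must be handled separately according to the signs of $M_j$ and $k$; once it is in place, the $k$-dependence of $\la^{(j)}_k$ splits cleanly into the fixed piece $2\Phi_{k+1}(g)$ plus an affine function of $k$, and everything else is a transparent assembly of the three previously proved theorems.
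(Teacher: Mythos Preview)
Your proof is correct and follows essentially the same route as the paper. The only cosmetic difference is in part (a): the paper passes the second-order difference equation $\tfrac12\la^{(j)}_{k-1} - \la^{(j)}_k + \tfrac12\la^{(j)}_{k+1} = g_{k+1} - g_k$ to the limit and then invokes the remark that every solution of~\eqref{eq:difference} has the form~\eqref{eq:rates}, whereas you compute the affine parameters $(a^{(j)}, b^{(j)})$ explicitly and let them converge; parts (b) and (c) are handled identically, by citing Theorems~\ref{thm:no-unit-diffusions} and~\ref{thm:conv-general}.
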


Theorem~\ref{thm:conv} (c) provides a partial description of the domain of convergence for the stationary gap distribution $\pi^{(\infty)}$; that is, for which initial distributions $Z(0)$ we have $Z(t) \Ra \pi^{(\infty)}$. To the best of our knowledge, it is still an unsolved problem to completely describe this domain of convergence, as well as domains of convergence for other stationary gap distributions $\pi$.

\begin{exm} Take the following drift coefficients: $g_n > 0,\, n \le 0;\, g_n = 0,\, n \ge 1$, with 
\begin{equation}
\label{eq:finite-sum}
\sum\limits_{n \le 0}g_n < \infty.
\end{equation}
Try $M_j = -j+1$ for $j \ge 1$. Then 
$$
\ol{g}[M_j:k] = \frac{1}{k + j}\SL_{l = -j+1}^{k\wedge 0}g_l,\ \ k > M_j.
$$
We can find an $N_j$ large enough so that 
\begin{equation}
\label{eq:lambda-651}
\la_k^{(j)} := 2\SL_{n = -j+1}^{k\wedge 0}g_n - 2\frac{k+j}{N_j+j}\SL_{n = -j+1}^{0}g_n > 0,\ \ k = -j+1, \ldots, 0.
\end{equation}
If we take $N_j > j^2$, then $(k+j)/(N_j+j) \to 0$ as $j \to \infty$. Therefore, from~\eqref{eq:finite-sum} and~\eqref{eq:lambda-651}, we get:
$$
\la^{(\infty)}_k := \lim\limits_{j \to \infty}\la^{(j)}_k = 2\sum_{n < k\wedge 0}g_n < \infty,\ \ k \in \MZ. 
$$
Thus, we can apply Theorem~\ref{thm:conv}. 
\end{exm}

\begin{thm} Under Assumption 1 and condition~\eqref{eq:unit-diffusions}, suppose all $\la^{(\infty)}_k = \infty,\, k \in \MZ$. Then, regardless of initial conditions, $Z(t) \Ra \mathbf{0}$ as $t \to \infty$. 
\label{thm:conv-0}
\end{thm}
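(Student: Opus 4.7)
The plan is to deduce Theorem~\ref{thm:conv-0} as an essentially immediate corollary of Theorem~\ref{thm:no-unit-diffusions} by specializing to the degenerate case. First I would record that under \eqref{eq:unit-diffusions} and Assumption 1, the limiting measure $\pi^{(\infty)}$ constructed via the coupling before Theorem~\ref{thm:no-unit-diffusions} takes the explicit form $\pi^{(\infty)} = \bigotimes_{k \in \MZ}\Exp(\la_k^{(\infty)})$, with the convention $\Exp(\infty) = \de_0$ already introduced in the paper. When every $\la_k^{(\infty)} = \infty$, each factor collapses to $\de_0$, so $\pi^{(\infty)} = \de_{\mathbf{0}}$, the Dirac mass at the origin of $\BR_+^{\MZ}$. (The consistency of this passage with the monotone coupling $z_k^{(j)} \searrow z_k^{(\infty)}$ is immediate: if $z_k^{(j)} \sim \Exp(\la_k^{(j)})$ with $\la_k^{(j)} \to \infty$, then $z_k^{(j)} \to 0$ a.s.\ along the coupling, so $z_k^{(\infty)} = 0$ a.s.)

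Next I would apply Theorem~\ref{thm:no-unit-diffusions}(a) to conclude that the family $(Z(t))_{t\ge 0}$ is tight in $\BR_+^{\MZ}$ (product topology), and Theorem~\ref{thm:no-unit-diffusions}(b) to conclude that every weak subsequential limit $\mu$ of $Z(t)$ as $t \to \infty$ satisfies $\mu \preceq \pi^{(\infty)} = \de_{\mathbf{0}}$. The short observation to finish is that the only Borel probability measure on $\BR_+^{\MZ}$ stochastically dominated by $\de_{\mathbf{0}}$ is $\de_{\mathbf{0}}$ itself. Indeed, for any $n \in \MZ$ and $\eps > 0$, let $x \in \BR^{\MZ}$ have $x_n = \eps$ and $x_i = 0$ for $i \ne n$; since $\mu$ sits on $\BR_+^{\MZ}$,
\[
\mu\{y \in \BR_+^{\MZ} : y_n \ge \eps\} \;=\; \mu[x, \infty) \;\le\; \de_{\mathbf{0}}[x, \infty) \;=\; 0.
\]
Letting $\eps \downarrow 0$ gives $\mu\{y_n > 0\} = 0$ for every $n$, so $\mu = \de_{\mathbf{0}}$.

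Tightness combined with the fact that every subsequential weak limit equals $\de_{\mathbf{0}}$ yields $Z(t) \Ra \mathbf{0}$ as $t \to \infty$, regardless of initial conditions. Since both ingredients — tightness and domination by $\pi^{(\infty)}$ — are supplied directly by Theorem~\ref{thm:no-unit-diffusions}, there is no real obstacle here; the only point requiring any care is the bookkeeping around $\Exp(\infty) = \de_0$, to confirm that the product-of-exponentials description of $\pi^{(\infty)}$ from Lemma~\ref{lemma:rate-limits} still extends unambiguously to the case where all rates are infinite, which it does via the monotone coupling constructed immediately before the statement of Theorem~\ref{thm:no-unit-diffusions}.
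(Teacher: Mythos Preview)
Your proposal is correct and follows essentially the same approach as the paper: identify $\pi^{(\infty)} = \de_{\mathbf{0}}$, invoke Theorem~\ref{thm:no-unit-diffusions}(a) for tightness and Theorem~\ref{thm:no-unit-diffusions}(b) for domination of every weak limit point by $\de_{\mathbf{0}}$, and conclude that the only such measure supported on $\BR_+^{\MZ}$ is $\de_{\mathbf{0}}$. Your version is actually more explicit than the paper's, which simply asserts the last implication without writing out the coordinate-wise argument.
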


\begin{exm} In Example 4 above, let $M_j = -j+1,\, N_j = j,\, j \ge 1$. From Theorem~\ref{thm:conv-0}, we get:
$$
\la_k^{(j)} = 2(j + k\wedge0),\ \ \mbox{and}\ \ \la_k^{(\infty)} = \lim_{j \to \infty}\la_k^{(j)} = \infty.
$$
\end{exm}


\section{Proofs} 

\subsection{Proof of Theorem~\ref{thm:existence}} {\it Proof of (a).} This  is similar to that of \cite[Theorem 3.2]{MyOwn6} and is based on Girsanov change of measure. We shall not repeat it here in full detail. However, noting that we start the construction from a system $X = (X_i)_{i \in \MZ}$ of independent Brownian motions starting from $X_i(0) = x_i,\, i \in \MZ$, we shall prove the following fact: 

\begin{lemma} For every $t \ge 0$, the system $X(t) = (X_i(t))_{i \in \MZ}$ is rankable, and one can choose ranking permutations $\mP_t,\, t \ge 0$, which satisfy the property (b) of Definition~\ref{defn:CBP}. 
\label{lemma:right-ranking}
\end{lemma}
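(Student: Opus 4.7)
The plan is to first establish via a Borel--Cantelli argument that a.s.\ only finitely many of the independent Brownian motions $X_i$ visit any bounded spatial interval during any compact time interval, and then to use this local finiteness to construct the ranking permutations $\mP_t$ with the required adaptedness and continuity properties.

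For the Borel--Cantelli step, fix a bounded interval $[u_-, u_+] \subset \BR$ and $T > 0$. Since $X_n(t) = x_n + W_n(t)$ with $W_n$ a standard Brownian motion, the reflection principle yields
\[
\MP\bigl(\exists\, t \in [0, T] :\, X_n(t) \in [u_-, u_+]\bigr) \le 4\,\Psi\bigl(\dist(x_n, [u_-, u_+])/\sqrt{T}\bigr),
\]
which for $|x_n|$ large decays like $e^{-c\, x_n^2}$ with some $c = c(T, u_\pm) > 0$. Because $x \in \mathcal W$, the series $\SL_{n \in \MZ} e^{-c\,x_n^2}$ converges, so Borel--Cantelli gives a.s.\ only finitely many $X_n$ visit $[u_-, u_+]$ during $[0, T]$. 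Exhausting $\BR \times [0, \infty)$ by countably many such rectangles (with rational endpoints and rational $T$) handles all choices simultaneously. In particular, at every $t \ge 0$ the set $\{X_i(t) : i \in \MZ\}$ has no accumulation point in $\BR$, so $X(t)$ is rankable.

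For the construction, since $x$ is rankable we may pick $\mP_0$ with $x_{\mP_0(k)} \le x_{\mP_0(k+1)}$ for all $k$. Define the ranked process $(Y_k)_{k \in \MZ}$ as the unique jointly continuous doubly-infinite nondecreasing sequence satisfying $\{Y_k(t)\}_{k \in \MZ} = \{X_i(t)\}_{i \in \MZ}$ as multisets for every $t$, with $Y_k(0) = x_{\mP_0(k)}$. Existence and continuity of $Y_k$ on a compact horizon $[0, T]$ reduce to the corresponding statement in the finite subsystem consisting of those $X_i$ that visit some sufficiently large bounded neighborhood of $[Y_{k-1}(0), Y_{k+1}(0)]$ during $[0, T]$: Step~1 makes this subsystem a.s.\ finite, and in a finite collection of continuous paths each order statistic is itself a continuous adapted process. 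Finally, define $\mP_t(k)$ to be the unique $n$ with $X_n(t) = Y_k(t)$, breaking ties lexicographically as in~\eqref{eq:ranking}. Then $\mP_t(k)$ is $\CF_t$-measurable by construction, hence adapted, and $X_{\mP_t(k)}(t) = Y_k(t)$ is continuous.

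The main obstacle is rigorously localizing in the second step: for each rank $k$ and horizon $T$, one must produce an \emph{a priori} bounded interval in which $Y_k$ a.s.\ lives throughout $[0, T]$, so that Step~1 then identifies a finite index set from which $Y_k$ can be extracted as an order statistic of a finite subsystem. The estimate combines Brownian moduli of continuity with the $\mathcal W$-decay of the initial configuration; once it is in place, adaptedness and continuity follow routinely, since in any finite subsystem the order statistics are classical continuous adapted processes.
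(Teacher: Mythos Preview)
Your first step (Borel--Cantelli for local finiteness) is exactly what the paper does, invoking the same reflection-principle estimate and the hypothesis $x \in \mathcal W$ to conclude that a.s.\ only finitely many particles visit any bounded interval on any compact time horizon.

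Where you diverge is in the second step. The paper does not attempt your direct construction. Instead it records two consequences of local finiteness: (i) $\lim_{n\to\pm\infty} X_n(t) = \pm\infty$ for every $t$, and (ii) the set of collision times $\{t : X_m(t) = X_n(t)\ \text{for some}\ m\ne n\}$ has Lebesgue measure zero (trivial for independent Brownian motions). It then invokes \cite[Theorem~3.1]{Harris}, which takes exactly these two inputs and produces the ranking permutations with the required adaptedness and continuity. So the paper outsources your ``main obstacle'' to a known result.

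Your direct route is viable in principle---it is essentially an inline proof of the relevant part of Harris's theorem---but as you yourself note, the localization is not actually carried out: you need an \emph{a priori} bound on $Y_k$ over $[0,T]$ before you can reduce to a finite subsystem, and your sketch of how to get it (``moduli of continuity with $\mathcal W$-decay'') is not an argument. One concrete ingredient you omit and would need is the measure-zero collision property, which controls what happens to $\mP_t(k)$ at ties; the paper uses it explicitly as a hypothesis of Harris's theorem. If you want a self-contained argument, the cleanest localization is not via moduli of continuity but via counting: for any level $u$, local finiteness makes $\#\{i : X_i(t) \le u\} - \#\{i : x_i \le u\}$ a finite, continuous-in-$t$ integer (hence locally constant off collisions), which pins down the rank-$k$ particle relative to a fixed finite collection. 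This is essentially Harris's proof.
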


\begin{proof} Take an interval $[u_-, u_+] \subseteq \BR$ and a time horizon $T > 0$. From \cite[Lemma 7.1, Lemma 7.2]{MyOwn6}, the Borel-Cantelli lemma, and the fact that $X(0) = x \in \mathcal W$ a.s., it follows that a.s. there exist only finitely many $n \ge 1$ such that $\min_{t \in [0, T]}X_n(t) > u_+$, and only finitely many $n \le -1$ such that $\max_{t \in [0, T]}X_n(t) < u_-$. Therefore, there exist a.s. only finitely many $n \in \MZ$ such that $\exists,\ t \in [0, T]:\, X_n(t) \in [u_-, u_+]$. In particular, for every $t \ge 0$, we have: 
\begin{equation}
\label{eq:limits-infty}
\lim\limits_{n \to \infty}X_n(t) = \infty,\ \ \lim\limits_{n \to -\infty}X_n(t) = -\infty.
\end{equation}
Any two Brownian motions collide on a set of times which a.s. has Lebesgue measure zero. The union of countably many zero probability events is itself a zero probability event; therefore, a.s.
\begin{equation}
\label{eq:no-collisions}
\mes\{t \ge 0\mid \exists\, m, n \in \MZ,\, m \ne n:\ X_m(t) = X_n(t)\} = 0.
\end{equation}
Now apply \cite[Theorem 3.1]{Harris} and complete the proof. 
\end{proof}

\smallskip

{\it Proof of (b).} It is quite similar to the one for one-sided infinite systems, given in \cite{S2011, IKS2013}, and \cite[Theorem 3.1]{MyOwn6}. However, there are some differences, so we present the full proof here.  Without loss of generality, assume $x_n \le x_{n+1}$ for $n \in \MZ$. By assumptions, the sequences $(g_n)_{n \in \MZ}$  and $(\si_n)_{n \in \MZ}$ have constant tails. Therefore, there exist some $n_{\pm} \in \MZ,\, g_{\pm} \in \BR,\, \si_{\pm} > 0$, such that
\begin{equation}
\label{eq:constant-tails}
g_n = g_+,\ \si_n = \si_{+},\ n \ge n_+;\ \ g_n = g_{-},\ \si_n = \si_{-},\ n \le n_-.
\end{equation}

\subsubsection{The idea of the construction} In the beginning, we have particles with ranks $n_-+1, \ldots, n_+-1$, which behave in a complicated way (as competing Brownian particles), and other particles, which behave simply as independent Brownian motions. We construct the two-sided infinite system as consisting of three parts: particles with ranks $n_-+1, \ldots, n_+-1$, which form a finite system of competing Brownian particles; infinitely many particles with ranks $n_+, n_++1, \ldots$, which behave as Brownian motions with drift coefficients $g_+$ and diffusion coefficients $\si_+^2$; and infinitely many particles with ranks $\ldots, n_--1, n_-$, which behave as Brownian motions with drift coefficients $g_-$ and diffusion coefficients $\si_-^2$. As long as a particle $X_n$ from the second or third part does not hit particles with ranks $n_-+1, \ldots, n_+-1$, this particle $X_n$ continues to behave as a Brownian motion. If this particle $X_n$ hits a particle from the first part at a certain time $\tau_1$, we remove $X_n$ from the second or third part, and add it to the first part. We do this for all particles from the second or third part which hit a particle from the first part at this moment $\tau_1$. Then we run this system again, until the next such hitting time $\tau_2$. The first part of this system increases at every time $\tau_m$. 

\subsubsection{Formal construction} For every pair $(M, N)$ of integers such that $M \le N$, and for every $x \in \BR^{N-M+1}$, take a probability space $\left(\Oa^{(M, N, x)}, \CF^{(M, N, x)}, \MP^{(M, N, x)}\right)$ with a system of $N - M + 1$ competing Brownian particles:
$$
X^{(M, N, x)} = \left(X^{(M, N, x)}_{M}, \ldots, X^{(M, N, x)}_N\right)
$$
with drift coefficients $(g_n)_{M \le n \le N}$ and diffusion coefficients $(\si_n^2)_{M \le n \le N}$, starting from $X^{(M, N, x)}(0) = (x_{M}, \ldots, x_N)$. Take yet another probability space with i.i.d. Brownian motions $W^{(j)}_k,\ j \ge 0,\ k \in \MZ$. Now, consider the product $(\Oa, \CF, \MP)$ of all these probability spaces. Define the infinite system $X$ by induction: We simultaneously construct an increasing sequence of stopping times $(\tau_m)_{m \ge 0}$, and the system $X$ on each time interval $[\tau_m, \tau_{m+1}]$, for each $m \ge 0$. First, we define 
$$
I_0 := \{n_-+1, \ldots, n_+-1\},\ \ \tau_0 := 0, 
$$
$$
J_0^+ := \{n_+, n_+ + 1, \ldots\},\ J_0^- := \{\ldots, n_--1, n_-\},
$$
and construct the system of particles: for $t \le \tau_1$, 
$$
X_k(t) := 
\begin{cases}
X_k^{(n_-, n_+)}(t),\ \ k \in I_0;\\
x_k + g_+t + \si_+W^{(0)}_k(t),\ \ k \ge n_+;\\
x_k + g_-t + \si_-W_k^{(0)}(t),\ \ k \le n_-.
\end{cases}
$$
Next, we define by induction
$$
\tau_{m+1} := \inf\{t \ge \tau_m\mid \exists\, i \in \MZ\setminus I_m,\ j \in I_0:\ X_i(t) = Y_j(t)\},
$$
$$
I_{m+1} := I_m\cup\{i \in \MZ\mid \exists\, j \in I_0: X_i(\tau_{m+1}) = Y_j(\tau_{m+1})\}.
$$
For each $m = 0, 1, \ldots$ let $J^+_m := J^+_0\setminus I_m$, $J^-_m := J^-_0\setminus I_m$. Let $n_-(m)$ and $n_+(m)$ be the minimal and maximal ranks of particles $X_i(\tau_m)$ at time $\tau_m$ with names $i$ in $I_m$. It is easy to prove by induction that the set of ranks of particles $X_i(\tau_m)$ with $i \in I_m$ is exactly $\{n_-(m), \ldots, n_+(m)\}$. Next, for every $m = 0, 1, \ldots$ and for $t \le \tau_{m+1} - \tau_m$, we define: $x_m := (X_{i}(\tau_m))_{i \in I_m}$, and 
$$
X_i\left(t+\tau_m\right) = 
\begin{cases}
X_i(\tau_m) + g_+t + \si_+W^{(m)}_i(t),\ \ i \in J^+_m;\\
X_i(\tau_m) + g_-t + \si_-W^{(m)}_i(t),\ \ i \in J^-_m;\\
X^{(x_m,\, n_-(m),\, n_+(m))}_i(t),\ i \in I_m. 
\end{cases}
$$

Assume we proved the following statements. 

\begin{lemma} 
\label{lemma:rankable}
For every $m = 1, 2, \ldots$ and every $t < \tau_m$, the vector $X(t) = (X_i(t))_{i \in \MZ}$ is rankable. 
\end{lemma}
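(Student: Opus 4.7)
The plan is to reduce the claim to a single global trajectorial tail estimate on a bounded horizon $[0,T]$: for every $T>0$ and every bounded interval $[u_-,u_+]\subseteq\BR$, there exist a.s.\ only finitely many $n\in\MZ$ such that $X_n(t)\in[u_-,u_+]$ for some $t\in[0,T]$. Taken over all $[u_-,u_+]$ with rational endpoints, this property is equivalent to rankability of $X(t)$ at every $t\in[0,T]$. Rankability on each $[0,\tau_m)$ is then immediate by choosing $T$ large enough (and letting $T\to\infty$ along a countable exhaustion).

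The key input is the Borel--Cantelli / Gaussian tail bound already invoked in the proof of Lemma~\ref{lemma:right-ranking}, in particular \cite[Lemma 7.1, Lemma 7.2]{MyOwn6}. For $n\in J_0^+$, the process $X_n$ is a Brownian motion with drift $g_+$ and diffusion $\si_+^2$ starting from $x_n$, so one has a bound of the form $\MP(\exists\, t\in[0,T]:X_n(t)\le u_+)\le C\exp(-\al(x_n-C'T)_+^2)$, for constants $C,C',\al$ depending only on $u_+,g_+,\si_+,T$. Since $x\in\mathcal W$ makes these tails summable over $n\in J_0^+$, Borel--Cantelli yields only finitely many such $n$ a.s. A symmetric estimate disposes of $J_0^-$, while $I_0$ is finite by definition; combining the three contributions gives the desired global finiteness, hence rankability at every $t\in[0,T]$.

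The technical subtlety that I expect to be the main obstacle is the mismatch between the above estimate, which treats each outer particle as a single Brownian motion over all of $[0,T]$, and the formal construction, which re-drives $X_n$ with fresh Brownian motions $W^{(m)}_n$ on each interval $[\tau_m,\tau_{m+1}]$. The way to handle this is to use the strong Markov property to couple the family $(W^{(m)}_n)_{m\ge 0}$ so that, for each $n\in\MZ$, as long as $n$ has not been absorbed into $I_m$ by time $T$, the trajectory $X_n$ restricted to $[0,T]$ coincides pathwise with a single Brownian motion $x_n+g_\pm t+\si_\pm\widetilde W_n(t)$. The same Borel--Cantelli argument, applied to hits of the (finite) initial range occupied by $I_0$, shows that only finitely many $n\in J_0^\pm$ are ever absorbed on $[0,T]$, so $|I_m|<\infty$ whenever $\tau_m\le T$. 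Feeding this back into the inductive construction, the global tail bound applies uniformly to every $X_n$ and yields rankability on every $[0,\tau_m)$ as claimed.
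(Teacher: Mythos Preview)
Your core input---the Gaussian tail estimate from \cite[Lemmata 7.1, 7.2]{MyOwn6} combined with Borel--Cantelli and the hypothesis $x\in\mathcal W$---is exactly what the paper uses. The difficulty is your packaging of it as a \emph{global} statement on $[0,T]$.

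Two concrete problems. First, at this point in the construction the process $X$ is defined only on $\bigl[0,\lim_m\tau_m\bigr)$; that $\tau_m\to\infty$ is a separate claim (Lemma~\ref{lemma:infinite-limit}) proved later, so a statement about ``$X_n(t)$ for $t\in[0,T]$'' is not yet well-posed. Second, your argument that only finitely many $n\in J_0^\pm$ are absorbed by time $T$---via Borel--Cantelli applied to hits of the \emph{initial} range of $I_0$---does not work: absorption occurs when $X_n$ meets one of the \emph{moving} ranked particles $Y_k$, $k\in I_0$, and nothing yet bounds their range. Controlling $\max_{t\le T}Y_{n_+-1}(t)$ is precisely the content of~\eqref{eq:exceed} in the proof of Lemma~\ref{lemma:infinite-limit}, and that requires a separate comparison with a one-sided infinite system, not just a tail bound on free Brownian motions. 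So your last paragraph is circular: it invokes a finiteness-of-absorption claim that is strictly harder than the lemma you are proving.

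The paper avoids both issues by running an induction on $m$ and working on $[0,\tau_m\wedge T]$. At the inductive step one assumes $I_{m-1}$ is finite (this is supplied by Lemma~\ref{lemma:finite-sets}, whose proof is independent). For $i\in J^\pm_{m-1}$ the path $X_i$ on $[0,\tau_m\wedge T]$ is, by concatenating the independent Brownian increments $W^{(0)}_i,\ldots,W^{(m-1)}_i$ at the stopping times $\tau_1,\ldots,\tau_{m-1}$, a Brownian motion with drift $g_\pm$ and diffusion $\si_\pm^2$ started from $x_i$. The Borel--Cantelli estimate then gives, for any level $u$, only finitely many $i\in J^+_{m-1}$ with $\min_{[0,\tau_m\wedge T]}X_i\le u$ (and symmetrically for $J^-_{m-1}$), and together with $|I_{m-1}|<\infty$ this yields rankability at every $t<\tau_m\wedge T$. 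No control on absorption or on the range of the $I_0$-particles is needed.
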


\begin{lemma}
\label{lemma:finite-sets}
For every $m = 1, 2, \ldots$ a.s. the set $I_m$ is finite. 
\end{lemma}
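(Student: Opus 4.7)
The plan is to proceed by induction on $m$, taking as the base case the trivial finiteness $|I_0| = n_+ - n_- - 1$. For the inductive step the main ingredient is the following a priori estimate, which I would establish first:

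\smallskip

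\noindent $(*)$\ \ A.s., for every $T > 0$ and every bounded interval $[u_-, u_+] \subseteq \BR$, only finitely many $i \in \MZ$ satisfy $X_i(t) \in [u_-, u_+]$ for some $t \in [0, T]$.

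\smallskip

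\noindent This is a quantitative extension, to the interacting system constructed above, of the independent-Brownian-motion fact already used in Lemma~\ref{lemma:right-ranking}.

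To prove $(*)$, I would exploit that although the law of $X_i$ is complicated (driven by different Brownian motions in the successive regimes, and with drift depending on the dynamically changing rank whenever $i$ lies in some cluster $I_k$), the drift and diffusion coefficients along its path stay uniformly bounded by $\ol g$ and $\ol \si$ from Remark~\ref{rmk:bdd-seq}. Writing $X_i(t) - x_i = A_i(t) + M_i(t)$ with $|A_i(t)| \le \ol g T$ and $\langle M_i \rangle_T \le \ol{\si}^2 T$, Dambis--Dubins--Schwarz realizes $M_i$ as a time change of a Brownian motion, and the reflection principle yields
$$
\MP\bigl(\exists\, t \in [0, T] : X_i(t) \in [u_-, u_+]\bigr) \le 4\,\Psi\!\left(\frac{|x_i| - \max(|u_-|, |u_+|) - \ol g T}{\ol \si \sqrt T}\right) \le C\, e^{-c x_i^2 / T}
$$
whenever $|x_i| > \max(|u_-|, |u_+|) + \ol g T$. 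Since $x \in \mathcal W$ forces $\sum_i e^{-c x_i^2 / T} < \infty$, the first Borel--Cantelli lemma delivers $(*)$.

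The induction then closes in a few lines. Assume $|I_m| < \infty$ a.s., fix $T > 0$, and work on $\{\tau_{m+1} \le T\}$. A preliminary step is to show that all values $Y_j(t)$ for $j \in I_0$ and $t \in [0, T]$ lie in some random bounded interval $[-K, K]$; this follows because each $Y_j$ with $j \in I_0$ decomposes as a Brownian motion with drift $g_j$ and diffusion $\si_j$ plus non-decreasing local-time push-downs and push-ups whose total is a.s.\ finite on $[0, T]$, using the standard finite-system semimartingale structure inside each regime $[\tau_k, \tau_{k+1}]$. Then every new index $i \in I_{m+1} \setminus I_m$ satisfies $X_i(\tau_{m+1}) \in [-K, K]$, so the trajectory of $X_i$ visits $[-K, K]$ on $[0, T]$, and $(*)$ applied to $[-K, K]$ and $T$ leaves only finitely many such $i$. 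The event $\{\tau_{m+1} < \infty\}$ is covered by letting $T$ run along the positive integers, while on $\{\tau_{m+1} = \infty\}$ the conclusion is trivial (with the convention $I_{m+1} := I_m$). The main obstacle is $(*)$ itself: the mutual interaction prevents a direct appeal to Lemma~\ref{lemma:right-ranking}, and the saving feature is precisely that uniform boundedness of drift and diffusion is preserved through every regime change---exactly what Dambis--Dubins--Schwarz plus a Gaussian tail bound require to produce a summand compatible with $x \in \mathcal W$.
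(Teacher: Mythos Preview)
Your strategy is workable but far heavier than what the paper does, and it carries a presentational circularity you should be aware of.

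\smallskip

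\textbf{Circularity in $(*)$.} At this point of the construction the process $X_i(t)$ is only defined on $[0,\tau_{m+1}]$ (granted that $I_0,\ldots,I_m$ are finite); nothing is defined on $[0,T]$ for an arbitrary $T$. So your $(*)$, phrased as a global statement about the interacting system on every finite horizon, is not yet meaningful. The repair is easy: for the inductive step you only need indices $i\in I_{m+1}\setminus I_m$, and for those $i$ the path $X_i$ on $[0,\tau_{m+1}]$ is simply a Brownian motion with constant drift $g_{\pm}$ and diffusion $\sigma_{\pm}^2$ (concatenated from the fresh $W_i^{(k)}$'s). Your Gaussian tail bound and Borel--Cantelli then apply to \emph{those} processes on $[0,T]$ (extend them past $\tau_{m+1}$ if you like), not to the full interacting system. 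Similarly, your bound on $Y_j$, $j\in I_0$, should be on $[0,\tau_{m+1}]$, where continuity on a compact interval already gives a random bound $K$; you then need a countable union over $K\in\MN$ because $(*)$ is stated for fixed intervals.

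\smallskip

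\textbf{Comparison with the paper.} The paper's proof avoids all of this. It simply observes: if $I_{m-1}$ is a fixed finite set $J$ and $I_m$ is infinite, then infinitely many $i\notin J$ satisfy $X_i(\tau_m)=Y_j(\tau_m)$ for some $j\in I_0$. Since $I_0$ is finite, pigeonhole forces infinitely many of these $X_i(\tau_m)$ to coincide. But for $i\notin J$ the processes $X_i$ are independent Brownian motions with drift, and already \emph{three} independent one-dimensional Brownian motions a.s.\ never experience a triple collision (the planar process $(X_{i_1}-X_{i_2},\,X_{i_2}-X_{i_3})$ does not hit the origin). A countable union over $m$ and over finite $J$ finishes. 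No tail estimates, no bounds on $Y_j$, no $K$.

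\smallskip

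\textbf{What each approach buys.} The paper's argument is a one-liner once you know the no-triple-collision fact. Your quantitative route is essentially the machinery the paper deploys later for Lemma~\ref{lemma:infinite-limit} (showing $\tau_m\to\infty$): there one really does need bounds on $Y_{n_+-1}$ and a Borel--Cantelli control on how many outside particles can reach a level. So your proposal is not wrong, but it front-loads into Lemma~\ref{lemma:finite-sets} work that is only needed for the harder Lemma~\ref{lemma:infinite-limit}.
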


\begin{lemma}
\label{lemma:correct-choice}
For every $m = 1, 2, \ldots$ and $t \in [0, \tau_m]$, there exists a ranking permutation $\mP_t$ of $X(t)$ so that condition (b) from Definition~\ref{defn:CBP} is satisfied on $[0, \tau_m]$. 
\end{lemma}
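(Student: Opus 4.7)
The plan is to prove the lemma by induction on $m$, exploiting the piecewise structure of the construction. The key observation is that on each interval $[\tau_m, \tau_{m+1})$ the particles split into three subsystems whose rank ranges are disjoint and remain so throughout the interval: the finite system on names $I_m$ occupies ranks $\{n_-(m), \ldots, n_+(m)\}$; the Brownian motions indexed by $J_m^+$ occupy ranks $\{n_+(m)+1, n_+(m)+2, \ldots\}$; and those indexed by $J_m^-$ occupy ranks $\{\ldots, n_-(m)-1\}$. The blocks stay separated on $[\tau_m, \tau_{m+1})$ by the intermediate value theorem applied to continuous paths: any particle in $J_m^\pm$ that crossed into the rank range of $I_m$ would first have to coincide with an extremal ranked particle of $I_m$, and by definition of $\tau_{m+1}$ this cannot happen before $\tau_{m+1}$.

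I would then define $\mP_t$ on $[\tau_m, \tau_{m+1})$ by gluing three rankings: the standard ranking of the finite system on $I_m$ (uniquely determined by the positions, with lexicographic tie-breaking), together with the rankings on $J_m^+$ and on $J_m^-$ obtained by applying \cite[Theorem 3.1]{Harris} to the respective countable families of independent Brownian motions. The hypotheses of Harris's theorem are checked exactly as in the proof of Lemma~\ref{lemma:right-ranking}: by the induction hypothesis, Lemma~\ref{lemma:finite-sets}, and the initial condition $X(0) = x \in \mathcal W$, the restriction of $X(\tau_m)$ to $J_m^\pm$ still lies in the configuration space $\mathcal W$, so the arguments behind \eqref{eq:limits-infty} and \eqref{eq:no-collisions} go through on each half-line. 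Adaptedness of $\mP_t(k)$ is then automatic because each of the three rankings is a Borel-measurable function of adapted positions, and within each interval $t \mapsto X_{\mP_t(k)}(t)$ is continuous by standard finite-system theory on $I_m$ and by Harris's theorem on $J_m^\pm$.

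The main technical point is continuity across the gluing times $\tau_m$. At $\tau_m$, a particle $X_i$ with $i \in J_{m-1}^\pm$ joins $I_m$ by colliding with an extremal ranked particle of the old block $I_{m-1}$, so at the instant $\tau_m$ the positions of the colliding pair coincide. Consequently, the Harris ranking on $J_{m-1}^\pm$ at $\tau_m^-$ and the enlarged finite-system ranking on $I_m$ at $\tau_m^+$ assign the same position to each affected rank (after lexicographic tie-breaking), and the ranked processes $Y_k$ have matching left and right limits at $\tau_m$. Since Lemma~\ref{lemma:finite-sets} guarantees that only finitely many particles switch classification at each $\tau_m$, this reduces to a finite case analysis. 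The induction then closes and yields $\mP_t$ with the required properties on $[0, \tau_m]$ for every $m$; the real obstacle throughout is organizing these compatibility checks at the gluing times rather than proving any new quantitative estimate.
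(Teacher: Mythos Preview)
Your proposal is correct and follows essentially the same approach as the paper: split the system on each interval $[\tau_m,\tau_{m+1}]$ into the three blocks $I_m$, $J_m^+$, $J_m^-$, verify on each infinite block the hypotheses needed for \cite[Theorem 3.1]{Harris} (namely~\eqref{eq:limits-infty} and~\eqref{eq:no-collisions}), and observe that by construction particles from distinct blocks cannot collide on $(\tau_m,\tau_{m+1})$. Your treatment is more explicit than the paper's in two respects --- the inductive framing and the compatibility check at the gluing times $\tau_m$ --- but these elaborate rather than alter the argument.
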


\begin{lemma}
\label{lemma:infinite-limit}
As $m \to \infty$, we have: $\tau_m \to \infty$ a.s. 
\end{lemma}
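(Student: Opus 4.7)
The plan is to fix an arbitrary $T > 0$ and show that a.s. only finitely many of the times $\tau_m$ lie in $[0, T]$; this immediately yields $\tau_m \to \infty$. The two ingredients are a Borel--Cantelli estimate on the outside particles' Brownian motions, together with a confinement bound on the ``core'' ranked particles $(Y_j)_{j \in I_0}$.

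For the first ingredient, while $X_i$ has not yet been absorbed it equals the Brownian motion $x_i + g_{\pm} t + \si_{\pm} W_i^{(0)}(t)$ for $i \in J_0^{\pm}$. The reflection principle bounds the probability that this BM enters a given interval $[-R, R]$ during $[0, T]$ by $2\Psi((|x_i| - R - |g_{\pm}|T)/(\si_{\pm} \sqrt{T}))$ once $|x_i|$ is large enough. Since $x \in \CW$ gives $\sum_i e^{-\al x_i^2} < \infty$ for every $\al > 0$, these probabilities are summable over $i$ for each fixed $R$, and Borel--Cantelli yields that a.s., simultaneously for all positive integers $R$, only finitely many of the pre-absorption BMs enter $[-R, R]$ during $[0, T]$. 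For the second ingredient, between consecutive stopping times the complicated subsystem $(X_i)_{i \in I_m}$ is a finite system of competing Brownian particles whose ranked components satisfy the SDEs of Lemma~\ref{lemma:ranked}. By standard maximal estimates and \cite[Lemma 7.1, Lemma 7.2]{MyOwn6}, its trajectories on $[0, T]$ are a.s. contained in some random interval $[-R_*, R_*]$; in particular, every ranked particle $Y_j$ with $j \in I_0$ lies in $[-R_*, R_*]$. Every absorption $\tau_{m+1} \le T$ thus forces the absorbed outside particle to coincide at $\tau_{m+1}$ with some ranked particle in $[-R_*, R_*]$, i.e., its pre-absorption BM must have entered this interval by time $T$. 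Combining with the first ingredient, only finitely many absorptions can occur in $[0, T]$.

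The main obstacle is a mild circularity: the confinement $R_*$ in the second ingredient depends on $I_m$, which grows through absorptions. I would resolve this by strong induction on the absorption count, which is a.s. finite at each stage by Lemma~\ref{lemma:finite-sets}. The base case is the initial finite system on $[0, \tau_1]$. For the inductive step, the newly absorbed particles' pre-absorption positions are already bounded by the $R_*$ of the previous step (so the enlarged core joined up inside a controlled region), and the enlarged but still finite competing-BP system continues to have its ranked trajectories bounded on $[\tau_m, \tau_{m+1}]$ by the same maximal estimates. An alternative, avoiding the circularity entirely, is to couple the post-absorption ranked particles with independent BMs via Lemma~\ref{lemma:comp}. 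Since $T$ was arbitrary, this yields $\tau_m \to \infty$ a.s.
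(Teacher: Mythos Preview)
Your overall plan---fix $T$, control the outside Brownian motions by Borel--Cantelli, and show that the core ranked particles $Y_j$, $j \in I_0$, stay inside a bounded window on $[0,T]$---matches the paper's strategy. The gap is in your resolution of the circularity.

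The strong-induction argument only yields, for each fixed $m$, an a.s.\ bound $R_*^{(m)}$ for the core on $[0,\tau_m]$. To rule out $\{\lim_m \tau_m \le T\}$ you need a bound on $\sup_{t < \lim_m \tau_m}|Y_{n_+-1}(t)|$ that is \emph{uniform in $m$}; nothing in your induction prevents $R_*^{(m)} \to \infty$ as the core absorbs more and more particles. (Each application of ``maximal estimates'' is to a new, larger finite system with random initial data depending on the previous stage, so the bounds compound.) Your alternative of invoking Lemma~\ref{lemma:comp} is not available either: that lemma compares two copies of the \emph{same} two-sided infinite system, which is precisely the object under construction.

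The paper breaks the circularity by comparing the finite core at every stage with a \emph{single fixed} one-sided infinite system $\ol{X} = (\ol{X}_i)_{i < n_+}$ (inverted: top particle but no bottom), with drifts $(g_n)_{n < n_+}$, diffusions $(\si_n^2)_{n < n_+}$, and initial conditions $\ol{X}_i(0) = x_i$. Passing from the finite core $Y^{(n_-(m),\,n_+(m))}$ to $\ol{X}$ means removing the top $n_+(m) - n_+ + 1$ ranked particles and adjoining infinitely many particles at the bottom; the comparison techniques of \cite[Corollary 3.11]{MyOwn6} then give $Y_{n_+-1}(t) \le \ol{Y}_{n_+-1}(t)$ on $[0,\tau_m\wedge T]$ for every $m$. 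Since $\ol{Y}_{n_+-1}$ does not depend on $m$, one simply picks $u_\eps$ with $\MP\bigl(\max_{0\le t\le T}\ol{Y}_{n_+-1}(t) > u_\eps\bigr) < \eps$, and your Borel--Cantelli step goes through with $R = u_\eps$. A symmetric comparison bounds $Y_{n_-+1}$ from below.
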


Using induction by $m$, together with Lemmata~\ref{lemma:rankable},~\ref{lemma:finite-sets},~\ref{lemma:correct-choice}, we get that until $\tau_m$, this is a system with required properties. By Lemma~\ref{lemma:infinite-limit}, this statement is true on the infinite time horizon. Uniqueness in law can be also proved in a straightforward way on using induction by $m$. This has been done in \cite{S2011, IKS2013}, and we shall not repeat all details here.

\subsubsection{Proof of Lemma~\ref{lemma:rankable}} Fix time horizon $T > 0$. Let us prove this statement for $\tau_m\wedge T$ instead of $\tau_m$. We use induction by $m$. For $m = 0$, there is nothing to prove. If $I_{m-1}$ is finite, it suffices to show that, for a given level $u \in \BR$, during the time interval $[0, \tau_m\wedge T]$,

\smallskip

(a) $\min_{t \in [0, \tau_m\wedge T]}X_i(t) \le u$ for only finitely many particles $X_i(t),\, i \in J^+_{m-1}$, a.s.

\smallskip

(b) $\max_{t \in [0, \tau_m\wedge T]}X_i(t) \ge u$ for only finitely many particles $X_i(t),\, i \in J^-_{m-1}$, a.s.

\smallskip

Particles from (a) and (b) are Brownian motions with drift and diffusion $g_+,\, \si_+^2$ and $g_-,\, \si_-^2$, respectively. Apply \cite[Lemmata 7.1, 7.2]{MyOwn6} together with the Borel-Cantelli lemma, and complete the proof of (a) and (b), together with Lemma~\ref{lemma:rankable}. 

\subsubsection{Proof of Lemma~\ref{lemma:finite-sets}} Assume the converse, and denote this event by $A_{\infty}$. If this event happened, then for some $m$, the set $I_{m-1}$ is finite, but the set $I_m$ is infinite. Therefore, we can represent 
$A_{\infty}$ as
\begin{equation}
\label{eq:countable-union}
A_{\infty} = \bigcup\limits_{J}^{\infty}A(m, J),\ \ 
\mbox{where}\ \ A(m, J) := \{I_{m-1} = J,\ I_m\ \mbox{is infinite}\}.
\end{equation}
Here, the union is taken over all finite sets $J \subseteq \MZ$. This union is countable. 
Assume the event $A(m, J)$ has happened. Then $\tau_{m} < \infty$. The fact that $I_m$ is infinite means that $X_i\left(\tau_{m}\right)$ is the same for infinitely many values of $i \in \MZ\setminus J$. But even three (let alone infinitely many) independent Brownian motions can collide only with probability zero. That is, if $W_1, W_2, W_3$ are independent one-dimensional Brownian motions, then 
$$
\MP\left(\exists\, t > 0:\ W_1(t) = W_2(t) = W_3(t)\right) = 0.
$$
Therefore, $\MP(A(m, J)) = 0$. Thus, from~\eqref{eq:countable-union} we have: $\MP(A_{\infty}) = 0$.

\subsubsection{Proof of Lemma~\ref{lemma:correct-choice}} Similar to the proof of Lemma~\ref{lemma:correct-choice}: we need to apply \cite[Theorem 3.1]{Harris}. The property~\eqref{eq:limits-infty} follows from properties (a) and (b) in the proof of Lemma~\ref{lemma:rankable}. The property~\eqref{eq:no-collisions} holds for $t \in [\tau_m, \tau_{m+1}]$ because: (a) for each of the three parts of the system, we can prove it separately; (b) by construction, on the time interval $(\tau_m, \tau_{m+1})$, particles from different parts of the system (for example, from the first and the second part) do not collide. Similarly, we can show the property (b) from Definition~\ref{defn:CBP} by considering each of the three parts separately. 

\subsubsection{Proof of Lemma~\ref{lemma:infinite-limit}} Fix time horizon $T > 0$. Assume we proved that 
\begin{equation}
\label{eq:exceed}
\forall\, \eps > 0\ \exists\, u_{\eps}:\ \forall\, m\ \ \MP\left(\max\limits_{t \le \tau_m\wedge T}Y_{n_+-1}(t) > u_{\eps}\right) < \eps.
\end{equation}
The event $A = \{\lim_{m \to \infty}\tau_m \le T\}$ means infinitely many particles $X_i$ hit at least one of ranked particles $Y_{k},\ k \in I_0$, during the time interval $[0, T]$. Without loss of generality, assume there are infinitely many $i \ge n_+$ such that this holds. Until each of these hits, $X_i$ behaves as a Brownian motion with drift and diffusion coefficients $g_+,\, \si_+^2$. Note that $X_i(0) \ge Y_{n_+-1}(0)$ for $i \ge n_+$. Because they have continuous trajectories, these particles $X_i$ hit the ranked particle $Y_{n_+-1}$ first among these ranked particles $Y_k,\, k \in I_0$. Denote 
$$
B(\eps) := \left\{\exists\, m \ge 0:\ \max\limits_{t \le \tau_m\wedge T}Y_{n_+-1}(t) > u_{\eps}\right\}.
$$
Assume the event $A\setminus B(\eps)$ has happened. A particle $X_i$ hit a particle $Y_{n_+-1}$ at some time $t \in [0, T]$, when the particle $Y_{n_+-1}$ was below the level $u_{\eps}$. This particle $X_i$ has continuous trajectories, and therefore it hit the level $u_{\eps}$ at some time $t \in [0, T]$. Moreover, there are infinitely many such particles $X_i$. In other words, if the event $A\setminus B(\eps)$ has happened, then infinitely many Brownian motions, starting from $x_i$, $i \ge n_+$, hit level $u_{\eps}$ during the time interval $[0, T]$. Because $x \in \mathcal W$, we have:
$$
\SL_{i=n_+}^{\infty}e^{-\al x_i^2} < \infty\ \mbox{for all}\ \al > 0.
$$
Applying \cite[Lemmata 7.1, 7.2]{MyOwn6}, and the Borel-Cantelli lemma, we get: $\MP(A\setminus B(\eps)) = 0$. But from~\eqref{eq:exceed} we get: $\MP(B(\eps)) < \eps$. Therefore, 
$$
\MP(A) \le \MP(A\setminus B(\eps)) + \MP(B(\eps)) < \eps.
$$
Since $\eps > 0$ is arbitrary, we conclude that $\MP(A) = 0$. 

\smallskip

Now, let us show~\eqref{eq:exceed}. Consider a (one-sided) infinite system of competing Brownian particles $\ol{X} = (\ol{X}_i)_{i < n_+}$ with drifts $g_n,\, n < n_+$, diffusions $\si_n^2,\, n < n_+$, starting from $\ol{X}_i(0) = x_i$. (This system is inverted: it has the top-ranked particle but not the bottom-ranked particle. It is straightforward to adjust definitions, existence and uniqueness results, and comparison techniques from \cite{MyOwn6} for this case.) From~\eqref{eq:constant-tails}, we have: $g_n = g_{-}$ and $\si_n = \si_{-}$ for $n \le n_-$. Next,  $x \in \mathcal W$, and therefore
$$
\SL_{n < n_+}e^{-\al x_n^2} < \infty\ \mbox{for all}\ \al > 0.
$$
From \cite[Theorem 3.1]{MyOwn6} (suitably adjusted for the inverted one-sided infinite system), there exists a unique in law weak version of this system $\ol{X}$. Next, fix an $m$. By construction of the system, until $\tau_m$, the particle $Y_{n_+-1}$ behaves as a ranked particle in the finite system $Y^{(n_-(m), n_+(m))}$. The one-sided infinite system $\ol{X}$ can be obtained from this finite system by removing the top $n_+(m) - n_++1$ ranked particles from the top, and adding infinitely many ranked particles to the bottom. It follows from comparison techniques, similar to \cite[Corollary 3.11]{MyOwn6}, that we can couple these two ranked systems so that $Y_{n_+-1}(t) \le \ol{Y}_{n_+-1}(t)$.  It suffices to find $u_{\eps}$ large enough so  that 
$$
\MP\left(\max\limits_{0 \le t \le T}\ol{Y}_{n_+-1}(t) > u_{\eps}\right) < \eps.
$$

\subsection{Proof of Lemma~\ref{lemma:approx}} This proof is somewhat lengthy, and we split it into a few lemmata. In the first subsection, we enunciate them and show how they combine to form the whole proof. In later subsections, we prove these lemmata.

\subsubsection{Overview of the proof of Lemma~\ref{lemma:approx}}

We follow the proof of \cite[Theorem 3.3]{MyOwn6}, with minor adjustments. Consider an approximating sequence $(M_j, N_j)$. 

\begin{lemma} For every $i \in \MZ$, the sequence $\bigl(X^{(M_j, N_j)}_i\bigr)_{j \ge 1}$ is tight in $C[0, T]$.
\label{lemma:tightness-of-X}
\end{lemma}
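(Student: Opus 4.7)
The plan is to apply the classical Kolmogorov--Chentsov tightness criterion in $C([0,T],\BR)$: since $X^{(M_j,N_j)}_i(0)=x_i$ is deterministic and independent of $j$ (for $j\ge j_i$), it suffices to obtain a uniform-in-$j$ estimate of the form
$$
\EE\bigl|X^{(M_j,N_j)}_i(t)-X^{(M_j,N_j)}_i(s)\bigr|^{4}\le C|t-s|^{2}
$$
for all $0\le s\le t\le T$. Such a bound will follow by decomposing each finite-system particle into a Lipschitz drift and a square-integrable martingale, using the global bounds $\ol{g}$ and $\ol{\si}$ from Remark~\ref{rmk:bdd-seq}.

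\textbf{Key steps.} First, I would write the finite-system SDE for the $i$th named particle as
$$
X^{(M_j,N_j)}_i(t)=x_i+\int_0^t\!\sum_{k=M_j}^{N_j}1\bigl(\mP^{(j)}_s(k)=i\bigr)g_k\,\md s+\int_0^t\!\sum_{k=M_j}^{N_j}1\bigl(\mP^{(j)}_s(k)=i\bigr)\si_k\,\md W_i(s),
$$
and denote the last two terms by $A^{(j)}_i(t)$ and $M^{(j)}_i(t)$, respectively. At any time $s$, at most one indicator is nonzero, so the integrand of $A^{(j)}_i$ is uniformly bounded by $\ol{g}$ and the integrand of $M^{(j)}_i$ is uniformly bounded by $\ol{\si}$; in particular $|A^{(j)}_i(t)-A^{(j)}_i(s)|\le\ol{g}|t-s|$ and $M^{(j)}_i$ is a true $L^{2}$-martingale with quadratic variation satisfying $\langle M^{(j)}_i\rangle_t-\langle M^{(j)}_i\rangle_s\le\ol{\si}^{2}|t-s|$. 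Applying the Burkholder--Davis--Gundy inequality to $M^{(j)}_i$ then gives
$$
\EE\bigl|M^{(j)}_i(t)-M^{(j)}_i(s)\bigr|^{4}\le C_4\,\EE\bigl(\langle M^{(j)}_i\rangle_t-\langle M^{(j)}_i\rangle_s\bigr)^{2}\le C_4\ol{\si}^{4}|t-s|^{2},
$$
and combining the two pieces yields the required $L^{4}$-increment bound with constants depending only on $\ol{g}$, $\ol{\si}$ and $T$, hence uniform in $j$. Kolmogorov--Chentsov then delivers tightness in $C([0,T],\BR)$.

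\textbf{Main obstacle.} There is no serious analytic difficulty here; the one item deserving care is the verification that $M^{(j)}_i$ is genuinely a square-integrable martingale (not merely local), which requires knowing that at each fixed $t$ exactly one of the indicators $1(\mP^{(j)}_t(k)=i)$ is nonzero. This in turn uses that $X^{(M_j,N_j)}$ is a well-defined finite system with a measurable ranking permutation, a fact that is standard (cf.\ \cite{BassPardoux}) and was already invoked in the statement of Lemma~\ref{lemma:approx}. Once this is in place, the uniformity of the estimates in $j$ is automatic from Remark~\ref{rmk:bdd-seq}, and tightness of each coordinate sequence $\bigl(X^{(M_j,N_j)}_i\bigr)_{j\ge j_i}$ follows.
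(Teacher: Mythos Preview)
Your proof is correct and follows essentially the same approach as the paper: both decompose $X^{(M_j,N_j)}_i$ into a drift term with integrand bounded by $\ol{g}$ and a stochastic integral with integrand bounded by $\ol{\si}$, then invoke a uniform moment/tightness criterion. The only cosmetic difference is that the paper cites a packaged lemma (\cite[Lemma 7.4]{MyOwn6}) in place of your explicit BDG + Kolmogorov--Chentsov computation.
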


\begin{lemma} For every $k \in \MZ$, the sequence $\bigl(Y^{(M_j, N_j)}_k\bigr)_{j \ge 1}$ is tight in $C[0, T]$.
\label{lemma:tightness-of-Y}
\end{lemma}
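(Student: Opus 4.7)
The plan is to combine the ranked-particle SDE of Lemma~\ref{lemma:ranked} (in its finite-system form, see Remark~\ref{rmk:ranked-general}), which for $M_j < k < N_j$ reads
\begin{equation*}
Y^{(M_j, N_j)}_k(t) = x_k + g_k t + \si_k B^{(j)}_k(t) + \tfrac12 L^{(j)}_{(k-1,k)}(t) - \tfrac12 L^{(j)}_{(k,k+1)}(t),
\end{equation*}
with the already-established Lemma~\ref{lemma:tightness-of-X} in order to verify two ingredients: (i) a uniform-in-$j$ probabilistic bound on $\sup_{t \in [0, T]} |Y^{(M_j, N_j)}_k(t)|$, and (ii) uniform control of the modulus of continuity $\oa(Y^{(M_j, N_j)}_k, [0, T], \de)$ as $\de \downarrow 0$. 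Together these yield tightness in $C([0, T], \BR)$ by the standard Arzel\`a--Ascoli criterion.

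For (i), I would use a counting argument. The event $\{Y^{(M_j, N_j)}_k(t) > R\}$ is equivalent to at least $N_j - k + 1$ of the named particles lying above $R$ at time $t$. Choose $R$ so large that $x_i \le R$ for all $i \le k + K$; then initially only the $N_j - k - K$ particles with index $i > k + K$ are above $R$, so for the event to occur at some $t \in [0, T]$ at least $K + 1$ particles with index $i \le k + K$ must reach level $R$ during $[0, T]$. For each such $i$, $\MP(\max_{t \in [0, T]} X^{(M_j, N_j)}_i(t) > R)$ is controlled by a Gaussian tail of order $\Psi((R - x_i - \ol{g}T)/(\ol{\si}\sqrt{T}))$ via Girsanov and comparison with a Brownian motion of drift $\ol g$ and diffusion $\ol{\si}^2$. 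The summability built into $x \in \mathcal W$ forces $\sum_{i \le k + K} \Psi(\cdot) \to 0$ as $K, R \to \infty$, and Markov's inequality converts this expected-crossings bound into the desired uniform tail estimate. A symmetric argument on down-crossings of a low level $-L$ gives the lower bound on $\inf_{t \in [0, T]} Y^{(M_j, N_j)}_k(t)$.

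For (ii), the same counting argument implies, on an event of probability at least $1 - \eps$, that throughout $[0, T]$ the named index realizing rank $k$ lies in a finite set $\{i : |i - k| \le K_\eps\}$. Since $Y^{(M_j, N_j)}_k$ is continuous and pointwise equal to one of finitely many processes $X^{(M_j, N_j)}_i$ which are tight by Lemma~\ref{lemma:tightness-of-X}, its modulus of continuity is controlled by the maximum of theirs. Equivalently, I would estimate the local-time increments $L^{(j)}_{(k-1,k)}(t) - L^{(j)}_{(k-1,k)}(s)$ and $L^{(j)}_{(k,k+1)}(t) - L^{(j)}_{(k,k+1)}(s)$ by the oscillation of $Y^{(M_j, N_j)}_k$ on $[s, t]$ plus the Brownian contribution, relying on the comparison techniques of Lemma~\ref{lemma:comp} adapted to finite systems.

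The main obstacle is the absence of a bottom-most or top-most anchor particle, which is what makes the one-sided infinite and finite cases easy. Here the counting must be performed symmetrically at both ends, and the summability encoded in $x \in \mathcal W$ is essential to rule out accumulation of far-away particles near rank $k$. Additional care is needed to ensure that the index neighborhood $\{i : |i - k| \le K_\eps\}$ can be chosen \emph{uniformly} over the whole approximating sequence $(M_j, N_j)$; this is why the counting is phrased in terms of crossings of fixed levels rather than in terms of the tightness of an individually named particle at each index.
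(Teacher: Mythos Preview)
Your two–step structure (uniform sup bound, then modulus of continuity via a finite set of candidate names) is exactly the paper's structure, but the mechanism you propose for step (i) is different from the paper's. The paper obtains Lemma~\ref{lemma:comp-0} by \emph{comparison with one-sided infinite systems}: it removes the bottom $k-M_j$ particles, adds infinitely many on top, and invokes the comparison techniques of \cite{MyOwn2,MyOwn6} to get $Y^{(M_j,N_j)}_k \ge \ol Y_k$ for a fixed (in $j$) one-sided system $\ol X$; the upper bound is symmetric. Your counting argument is more elementary and avoids the one-sided existence theory, but as written it has a loose end: the claim ``$\sum_{i\le k+K}\Psi(\cdot)\to 0$ as $K,R\to\infty$'' is not true for general $x\in\mathcal W$ (if the $x_i$ cluster near $R$ the sum can stay bounded away from $0$, or even diverge). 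The clean fix is to take $K=0$: then $\{\sup_{[0,T]}Y^{(M_j,N_j)}_k>R\}$ forces at least one particle with index $i\le k$ to reach $R$, so by a union bound
\[
\MP\Bigl(\sup_{[0,T]}Y^{(M_j,N_j)}_k>R\Bigr)\ \le\ \sum_{i\le k}2\Psi\Bigl(\tfrac{R-x_i-\ol g T}{\ol\si\sqrt T}\Bigr),
\]
which is finite by $x\in\mathcal W$ and tends to $0$ as $R\to\infty$ by dominated convergence. This is the order-statistic inequality $Y_k\le\max_{i\le k}X_i$ in disguise, and it gives exactly the content of the paper's Lemma~\ref{lemma:comp-0} without comparison techniques.

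For step (ii) your outline matches the paper's Lemmata~\ref{lemma:set-of-names}--\ref{lemma:final-tight}, but one sentence is imprecise: ``$Y_k$ is pointwise equal to one of finitely many $X_i$'' does \emph{not} by itself control the modulus of continuity (rapid switching could create oscillation). What is needed, and what the paper proves as Lemma~\ref{lemma:suitable-rank}, is that on the good event $Y^{(M_j,N_j)}_k$ is the $(k-I_-+1)$st order statistic of the \emph{fixed} finite family $(X^{(M_j,N_j)}_i)_{I_-\le i\le I_+}$; then the $1$-Lipschitz property of order statistics in $\ell^\infty$ transfers the modulus-of-continuity control from the $X_i$'s (tight by Lemma~\ref{lemma:tightness-of-X}) to $Y_k$. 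Your alternative via local-time increments and Lemma~\ref{lemma:comp} is vague and unnecessary; the order-statistic route already closes the argument.
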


The proofs of Lemmata~\ref{lemma:tightness-of-X} and~\ref{lemma:tightness-of-Y} are given later in this subsection. Assuming we already proved these lemmata, let us finish the proof of Lemma~\ref{lemma:approx}. 

For every $j \ge 1$, let $W^{(M_j, N_j)} = (W_i^{(M_j, N_j)})_{M_j \le i \le N_j}$ be the sequence of driving Brownian motions for the system $X^{(M_j, N_j)}$ of competing Brownian particles. Then for every finite subset $I \subseteq \MZ$, we can extract a subsequence $(M'_j, N'_j)_{j \ge 1}$ of $(M_j, N_j)_{j \ge 1}$ such that there exist continuous adapted $\BR^{|I|}$-valued processes 
$$
X_I = (X_i)_{i \in I},\ X_i = (X_i(t),\, 0 \le t \le T),\ i \in I,
$$
$$
Y_I = (Y_i)_{i \in I},\ Y_i = (Y_i(t),\, 0 \le t \le T),\ i \in I,
$$
$$
W_I = (W_i)_{i \in I},\ W_i = (W_i(t), 0 \le t \le T),\ i \in I,
$$
for which we have the following convergence in $C([0, T], \BR^{3|I|})$, 
\begin{equation}
\label{eq:converged}
\Bigl(\bigl[X^{(M'_j, N'_j)}\Bigr]_I, \Bigl[Y^{(M'_j, N'_j)}\Bigr]_I, \Bigl[W^{(M'_j, N'_j)}\Bigr]_I\Bigr) \Ra (X_I, Y_I, W_I).
\end{equation}
Using the standard diagonal arguments, we can find a subsequence $(M'_j, N'_j)_{j \ge 1}$ which is {\it independent} of $I$. Then there exist $\BR^{\MZ}$-valued continuous processes 
$$
X = (X_i)_{i \in \MZ},\ \ X_i = (X_i(t),\, 0 \le t \le T),\ i \in \MZ,
$$
$$
Y = (Y_i)_{i \in \MZ},\ \ Y_i = (Y_i(t),\, 0 \le t \le T),\ i \in \MZ,
$$
$$
W = (W_i)_{i \in \MZ},\ \ W_i = (W_i(t),\, 0 \le t \le T),\ i \in \MZ,
$$
such that we have the following equality in law:
$$
\left([X(t)]_I, 0 \le t \le T\right) = (X_I(t),\, 0 \le t \le T),
$$
$$
\left([Y(t)]_I, 0 \le t \le T\right) = (Y_I(t),\, 0 \le t \le T),
$$
$$
\left([W(t)]_I, 0 \le t \le T\right) = (W_I(t),\, 0 \le t \le T),
$$
In fact, $W_i,\ i \in \MZ$, are i.i.d. Brownian motions, because these are weak limits of i.i.d. Brownian motions in~\eqref{eq:converged}. By the Skorohod representation theorem, we can assume a.s. convergence instead of the weak one (possibly after changing the probability space). By construction, the following sets of points are equal for all $t \in [0, T]$:
$$
\{X_i(t)\mid i \in \MZ\} = \{Y_k(t)\mid k \in \MZ\}.
$$

\medskip

\begin{lemma} 
\label{lemma:no-ties}
For every $t \in [0, T]$, a.s. there is no tie in the vector $Y(t) = (Y_k(t))_{k \in \MZ}$. 
\end{lemma}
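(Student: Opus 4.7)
Plan: By countable additivity over consecutive ranks, it suffices to show that for each fixed $t\in(0,T]$ and each $k\in\MZ$ one has $\MP(Z_k(t)=0)=0$, where $Z_k(t):=Y_{k+1}(t)-Y_k(t)$; the endpoint $t=0$ is covered by the ranking assumption on the initial positions (which we may take strict without loss of generality). The idea is to transfer the no-tie property from the finite approximants via a uniform tail bound on $Z_k^{(j)}(t)$ near zero.

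Using the a.s.\ convergence $Z_k^{(j)}(t):=Y_{k+1}^{(M_j',N_j')}(t)-Y_k^{(M_j',N_j')}(t)\to Z_k(t)$ together with the Portmanteau theorem applied to the open set $\{z\in\BR_+:z<\epsilon\}$,
\[
\MP\bigl(Z_k(t)<\epsilon\bigr)\;\le\;\liminf_{j\to\infty}\MP\bigl(Z_k^{(j)}(t)<\epsilon\bigr).
\]
It therefore suffices to produce a function $\phi$ with $\phi(\epsilon)\to 0$ as $\epsilon\downarrow 0$ satisfying $\sup_{j\ge j_k}\MP(Z_k^{(j)}(t)<\epsilon)\le\phi(\epsilon)$; letting $\epsilon\downarrow 0$ forces $\MP(Z_k(t)=0)=0$, and a countable union over $k\in\MZ$ completes the lemma.

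To obtain this uniform tail bound, I plan to work with the finite-system gap SDE
\[
Z_k^{(j)}(t)=Z_k^{(j)}(0)+(g_{k+1}-g_k)t+\sigma_{k+1}B_{k+1}^{(j)}(t)-\sigma_k B_k^{(j)}(t)+L_{(k,k+1)}^{(j)}(t)-\tfrac{1}{2}\bigl(L_{(k-1,k)}^{(j)}(t)+L_{(k+1,k+2)}^{(j)}(t)\bigr),
\]
obtained by subtracting the ranked equations of Remark~\ref{rmk:ranked-general} for the finite system. The martingale part $\sigma_{k+1}B_{k+1}^{(j)}-\sigma_k B_k^{(j)}$ has $j$-independent quadratic variation $(\sigma_k^2+\sigma_{k+1}^2)t>0$. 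After a Girsanov change of measure on the finite system that removes all drifts, the transformed gap becomes a pure reflected semimartingale whose law at the fixed time $t$ admits an explicit reflected-Brownian-type density near zero, yielding a bound of the form $\widetilde{\MP}_j(Z_k^{(j)}(t)<\epsilon)\le C\sqrt{\epsilon}$; a Cauchy--Schwarz transfer back to $\MP$ via the Girsanov density would then produce the desired $\phi$.

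The main obstacle is uniformity of the Girsanov factor in $j$: removing the drifts simultaneously from all $N_j'-M_j'+1$ driving Brownian motions produces an $L^2$-moment that blows up with $j$. I would overcome this by localizing the change of measure to those finitely many indices $i$ whose particles have positive probability of entering a bounded spatial neighborhood of $Y_k(t)$ during $[0,T]$; the assumption $x\in\mathcal W$ combined with the Brownian tail estimates of \cite[Lemmata~7.1, 7.2]{MyOwn6} (already exploited in the proof of Theorem~\ref{thm:existence}) ensures that this localized index set may be chosen $j$-independently outside a set of arbitrarily small probability, restoring uniform control of the Girsanov exponent.
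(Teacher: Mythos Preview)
Your reduction to a uniform-in-$j$ bound $\sup_{j\ge j_k}\MP(Z_k^{(j)}(t)<\epsilon)\le\phi(\epsilon)\to 0$ is a valid strategy, and the Portmanteau/Fatou step is correct. The gap is in the proposed mechanism for obtaining that bound. After the Girsanov change, the process $Z_k^{(j)}$ is \emph{not} a one-dimensional reflected Brownian motion: the very SDE you write down contains the term $-\tfrac12\bigl(L_{(k-1,k)}^{(j)}+L_{(k+1,k+2)}^{(j)}\bigr)$, which pushes $Z_k^{(j)}$ \emph{downward}. The gap is a single coordinate of an $(N_j'-M_j')$-dimensional obliquely reflected Brownian motion, and there is no ``explicit reflected-Brownian-type density near zero'' for such a coordinate; any density bound would require uniform-in-$j$ control of the neighbouring local times, which you have not supplied and which does not follow from localising the Girsanov exponent. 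In case~(a) of Theorem~\ref{thm:existence} one could instead exploit that, under the driftless measure, the named particles are i.i.d.\ Brownian motions and bound $\MP(Z_k^{(j)}(t)<\epsilon)$ by a union bound over pairs of independent Gaussians in a localised index window; but this is a different argument from the one you sketch, and it does not extend to case~(b) where the $\sigma_n$ vary.

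The paper bypasses quantitative density bounds entirely. It splits the tie event at time $t$ into $E_1$ (a tie among finitely many consecutive ranks $k,\dots,l$, strictly separated from $Y_{k-1}$ and $Y_{l+1}$) and $E_2$ (infinitely many named particles coincide). For $E_1$, continuity produces a small time interval around $t$ on which the block $(Y_k,\dots,Y_l)$ is isolated from its neighbours; on that interval the prelimit blocks $(Y_k^{(M_j,N_j)},\dots,Y_l^{(M_j,N_j)})$ are themselves finite ranked systems of $l-k+1$ competing Brownian particles (Lemma~\ref{lemma:cut}), and the Feller property of finite SRBM (Lemma~\ref{lemma:Feller}) passes this description to the limit, where the known no-tie-at-a-fixed-time result for finite systems \cite[Lemma~2.3]{MyOwn6} applies directly. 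For $E_2$, the already-proved Lemma~\ref{lemma:set-of-names} shows that only finitely many named particles can meet $Y_k$ on $[0,T]$. This structural argument is dimension-free and covers both cases~(a) and~(b) without any uniform small-ball estimate.
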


Lemma~\ref{lemma:no-ties} can be equivalently stated as follows:  the set $\{t \in [0, T]\mid \exists\, k \ne l:\, Y_k(t) = Y_l(t)\}$ has Lebesgue measure zero. Its proof is postponed until the end of this subsection. The rest of the proof of Lemma~\ref{lemma:approx} closely follows that of \cite[Theorem 3.3]{MyOwn6}, and we do not repeat it here. 

\subsubsection{Proof of Lemma~\ref{lemma:tightness-of-X}} For $j \ge 1$, define
\begin{equation}
\label{eq:drift-j}
\be^{(M_j, N_j)}_i(s) := \SL_{k=M_j}^{N_j}1(X_i^{(M_j, N_j)}(s)\ \mbox{has rank}\ k)\,g_k,
\end{equation}
\begin{equation}
\label{eq:diffusion-j}
\rho^{(M_j, N_j)}_i(s) := \SL_{k=M_j}^{N_j}1(X_i^{(M_j, N_j)}(s)\ \mbox{has rank}\ k)\,\si_k.
\end{equation}
We can represent $X_i^{(M_j, N_j)}$ for $t \ge 0,\ M_j \le i \le N_j$, as
\begin{equation}
\label{eq:X-repr}
X^{(M_j, N_j)}_i(t) = x_i + \int_0^t\be^{(M_j, N_j)}_i(s)\,\md s + \int_0^t\rho^{(M_j, N_j)}_i(s)\,\md W_i^{(M_j, N_j)}(s),
\end{equation}
where $W^{(M_j, N_j)}_i$, $i = M_j, \ldots, N_j$, are i.i.d. Brownian motions. From~\eqref{eq:bdd-seq},~\eqref{eq:drift-j},~\eqref{eq:diffusion-j}, we get:
\begin{equation}
\label{eq:bounded-coefficients}
\bigl|\be^{(M_j, N_j)}_i(s)\bigr| \le \ol{g},\ \ \bigl|\rho^{(M_j, N_j)}_i(s)\bigr| \le \ol{\si}.
\end{equation}
It suffices to apply \cite[Lemma 7.4]{MyOwn6} and finish the proof. 

\subsubsection{Proof of Lemma~\ref{lemma:tightness-of-Y}} Fix a $k \in \MZ$. For all $j$, we have: $Y_k^{(j)}(0) = x_k$. Without loss of generality, we can shift this system and assume $Y_k^{(j)}(0) = 0$ for all $j \ge j_k$. 

\begin{lemma}
\label{lemma:comp-0}
For every $\eta > 0$, there exist $u_{\pm}$ such that for every $j \ge j_k$, we have:
\begin{equation}
\label{eq:comp-0}
\MP\left(\forall \ t \in [0, T],\ \ u_- \le Y^{(M_j, N_j)}_k(t) \le u_+\right) \ge 1 - \eta.
\end{equation}
\end{lemma}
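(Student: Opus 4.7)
The strategy is to sandwich $Y_k^{(M_j, N_j)}(t)$ between two processes that do not depend on $j$, using comparison across system sizes. By the comparison techniques of \cite[Corollary 3.11]{MyOwn6} (whose two-sided analogue is Lemma~\ref{lemma:comp}), removing particles below rank $k$ in a finite system can only lower the $k$th ranked particle, while removing particles above rank $k$ can only raise it. Iterating these comparisons and passing to the infinite limit, I plan to construct on a common probability space two auxiliary processes: $Y^-(t)$, the bottom-ranked particle of the one-sided infinite system with drifts $(g_n)_{n \ge k}$, diffusions $(\sigma_n^2)_{n \ge k}$, and initial conditions $(x_n)_{n \ge k}$; and $Y^+(t)$, the top-ranked particle of the \emph{inverted} one-sided infinite system with drifts $(g_n)_{n \le k}$, diffusions $(\sigma_n^2)_{n \le k}$, and initial conditions $(x_n)_{n \le k}$. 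Both systems are well-defined by (the inverted version of) \cite[Theorem 3.1]{MyOwn6}, because $x \in \mathcal W$ implies $\sum_{n \ge k} e^{-\alpha x_n^2} + \sum_{n \le k} e^{-\alpha x_n^2} < \infty$ for every $\alpha > 0$; indeed, the inverted one-sided infinite system is precisely the device invoked near the end of the proof of Theorem~\ref{thm:existence}(b).

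Given this setup, the core of the proof is the sandwich
$$Y^-(t) \;\le\; Y_k^{(M_j, N_j)}(t) \;\le\; Y^+(t), \qquad 0 \le t \le T,\; j \ge j_k,$$
valid almost surely under the right coupling. To prove it I would first establish the finite-system comparison
$$Y_k^{(k, N_j)}(t) \;\le\; Y_k^{(M_j, N_j)}(t) \;\le\; Y_k^{(M_j, k)}(t),$$
and then let $N_j \to \infty$ in the lower bound and $M_j \to -\infty$ in the upper bound, using the monotone weak convergence of ranked finite systems to one-sided infinite systems from \cite[Theorem 3.3]{MyOwn6}. Once the sandwich is in place, the lemma reduces to the two tail bounds $\MP(\max_{t \in [0, T]} Y^+(t) > u_+) < \eta/2$ and $\MP(\min_{t \in [0, T]} Y^-(t) < u_-) < \eta/2$, which hold for $u_+$ sufficiently large and $u_-$ sufficiently small because $Y^\pm$ are almost surely finite continuous processes on the compact interval $[0, T]$, independently of $j$.

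The main obstacle is rigorously justifying the comparison across system sizes, since Lemma~\ref{lemma:comp} as stated only compares two copies of the \emph{same} system from different initial conditions. I plan to circumvent this by realizing an extra top particle as the monotone limit of a particle whose initial coordinate tends to $+\infty$: by Lemma~\ref{lemma:comp}, applied to two configurations differing only in one far-away particle, together with continuity of the ranked dynamics in the initial data, adding a particle at the top monotonically pushes every other ranked particle downward. A symmetric construction handles adding particles at the bottom, and the passage to the infinite limits is then controlled by the existence and monotone convergence results already established for one-sided systems in \cite{MyOwn6}. Beyond this bookkeeping the proof requires no new analytic input.
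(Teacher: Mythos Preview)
Your approach is essentially the same as the paper's: both bound $Y_k^{(M_j,N_j)}$ from below by the bottom-ranked particle of the one-sided infinite system $(\ol{X}_n)_{n\ge k}$ with drifts $(g_n)_{n\ge k}$, diffusions $(\si_n^2)_{n\ge k}$, initial data $(x_n)_{n\ge k}$, and symmetrically from above by the inverted system, then use continuity of these $j$-independent processes on $[0,T]$ to choose $u_\pm$. The paper sidesteps your ``main obstacle'' by directly invoking \cite[Corollary 3.9, Remarks 8, 9]{MyOwn2}, which already handle comparison under insertion and removal of ranked particles, so your limiting workaround via Lemma~\ref{lemma:comp} is unnecessary.
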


\begin{proof} 
Take a one-sided infinite system $\ol{X} = (\ol{X}_n)_{n \ge k}$ of competing Brownian particles with drift coefficients $(g_n)_{n \ge k}$, diffusion coefficients $(\si_n^2)_{n \ge k}$, starting from $\ol{X}_n(0) = x_n,\, n \ge k$. From $x \in \mathcal W$, we have:
\begin{equation}
\label{eq:series-122}
\SL_{n=k}^{\infty}e^{-\al x_n^2} < \infty\ \mbox{for all}\ \al > 0.
\end{equation}
Using~\eqref{eq:constant-tails} and~\eqref{eq:series-122}, and applying \cite[Theorem 3.1]{MyOwn6}, we get: This system $\ol{X}$ exists in the weak sense and is unique in law. Denote by $\ol{Y} = (Y_k, Y_{k+1}, \ldots)$ the corresponding system of ranked particles. 
One can get the system $\ol{X}$ from $X^{(M_j, N_j)}$ by removing the bottom $k - M_j$ particles and adding infinitely many particles to the top. By comparison techniques, see \cite[Corollary 3.9, Remark 8, Remark 9]{MyOwn2}, if $j \ge j_k$, we can couple $X^{(M_j, N_j)}$ and $\ol{X}$ so that 
\begin{equation}
\label{eq:comparison-1}
Y^{(M_j, N_j)}_k(t) \ge \ol{Y}_k(t),\ t \in [0, T].
\end{equation}
Since $\ol{Y}_k$ is continuous on $[0, T]$, we can find a $u_- \in \BR$ small enough so that
\begin{equation}
\label{eq:406}
\MP\left(\min\limits_{0 \le t \le T}\ol{Y}_k(t) \ge u_-\right) \ge 1 - \frac{\eta}2.
\end{equation}
Comparing~\eqref{eq:comparison-1} and~\eqref{eq:406}, we get that for all $j \ge j_k$, 
\begin{equation}
\label{eq:comp-1}
\MP\left(\min\limits_{0 \le t \le T}Y^{(M_j, N_j)}_k(t) \ge u_-\right) \ge 1 - \frac{\eta}2.
\end{equation}
Similarly to~\eqref{eq:comp-1}, we can find a $u_+$ large enough so that for all $j \ge j_k$, we have:
\begin{equation}
\label{eq:comp-2}
\MP\left(\max\limits_{0 \le t \le T}Y^{(M_j, N_j)}_k(t) \le u_+\right) \ge 1 - \frac{\eta}2.
\end{equation}
Combining~\eqref{eq:comp-1} and~\eqref{eq:comp-2}, we get~\eqref{eq:comp-0}. 
\end{proof}

\begin{lemma}  For $j \ge j_k$, define the set of names:
\label{lemma:set-of-names}
$$
\mathcal I^{(j)}_k := \left\{i \in \MZ\mid \exists\, t \in [0, T]:\, X^{(M_j, N_j)}_i(t) = Y^{(M_j, N_j)}_k(t)\right\}.
$$
For every $\eta > 0$, there exist $I_-, I_+ \in \MZ$ and $J_k \ge 0$ such that for all $j \ge J_k$, we get:
$$
\MP\left(\mathcal I^{(j)}_k \subseteq [I_-, I_+]\right) \ge 1 - \eta.
$$
\end{lemma}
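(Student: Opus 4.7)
The approach combines the interval confinement of $Y^{(M_j,N_j)}_k$ provided by Lemma~\ref{lemma:comp-0} with a uniform-in-$j$ sub-Gaussian tail bound on the fluctuations of each named particle $X^{(M_j,N_j)}_i$, then sums these bounds over the names $i$ lying outside a large window by invoking $x \in \mathcal W$. The output is a union-bound estimate of the form claimed.

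First, apply Lemma~\ref{lemma:comp-0} with $\eta/2$ in place of $\eta$ to obtain reals $u_- < u_+$ and $J_k^{(0)} \ge j_k$ such that for every $j \ge J_k^{(0)}$ the event
$$
\mathcal A_j := \bigl\{Y^{(M_j,N_j)}_k(t) \in [u_-, u_+]\ \text{for all}\ t \in [0,T]\bigr\}
$$
has probability at least $1 - \eta/2$. On $\mathcal A_j$, any $i \in \mathcal I^{(j)}_k$ yields some $t \in [0,T]$ with $X^{(M_j,N_j)}_i(t) \in [u_-, u_+]$; in particular, the particle must either descend to $u_+$ (if $x_i > u_+$) or ascend to $u_-$ (if $x_i < u_-$) during $[0,T]$.

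Next, combining the representation~\eqref{eq:X-repr} with the bounds~\eqref{eq:bounded-coefficients}, the drift part of $X^{(M_j,N_j)}_i$ is bounded in absolute value by $\ol g\,T$ on $[0,T]$, while its martingale part has quadratic variation at most $\ol\si^2 T$. Dambis--Dubins--Schwarz and the standard Gaussian estimate for the infimum of a Brownian motion (precisely \cite[Lemmata 7.1, 7.2]{MyOwn6}) then yield constants $C,\al > 0$ depending only on $\ol g,\ol\si,T$ and not on $(M_j,N_j)$ or $i$, such that for $x_i > u_+$,
$$
\MP\Bigl(\min\limits_{0 \le t \le T}X^{(M_j,N_j)}_i(t) \le u_+\Bigr) \le C\exp\bigl(-\al(x_i - u_+)^2\bigr),
$$
and for $x_i < u_-$,
$$
\MP\Bigl(\max\limits_{0 \le t \le T}X^{(M_j,N_j)}_i(t) \ge u_-\Bigr) \le C\exp\bigl(-\al(u_- - x_i)^2\bigr).
$$
Since the initial data are ranked and $\sum_{i \in \MZ} e^{-\al x_i^2} < \infty$ (because $x \in \mathcal W$), we have $x_i \to +\infty$ as $i \to +\infty$ and $x_i \to -\infty$ as $i \to -\infty$. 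Pick $I_+ > 0$ and $I_- < 0$ so that $x_{I_+} > u_+$, $x_{I_-} < u_-$, and
$$
\SL_{i > I_+}C\exp\bigl(-\al(x_i-u_+)^2\bigr) + \SL_{i < I_-}C\exp\bigl(-\al(u_- - x_i)^2\bigr) < \eta/2.
$$
Choose $J_k \ge J_k^{(0)}$ so that $M_j \le I_-$ and $N_j \ge I_+$ for $j \ge J_k$, which is possible by Definition~\ref{defn:approx}. A union bound yields
$$
\MP\bigl(\mathcal I^{(j)}_k \not\subseteq [I_-, I_+]\bigr) \le \MP(\Oa\setminus\mathcal A_j) + \eta/2 \le \eta.
$$

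The principal technical point is the uniformity in $j$ of the tail bounds. Since $\be^{(M_j,N_j)}_i$ and $\rho^{(M_j,N_j)}_i$ depend on the ranking and hence on the entire configuration of the $j$th finite system, $X^{(M_j,N_j)}_i$ is \emph{not} a Brownian motion and its law varies with $j$. The resolution is the a.s.\ bound~\eqref{eq:bounded-coefficients}, whose constants $\ol g,\ol\si$ do not depend on $j$: this permits the uniform sub-Gaussian control of the martingale part, after which the summation over $i$ via $x \in \mathcal W$ is routine.
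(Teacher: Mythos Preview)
Your proof is correct and follows essentially the same approach as the paper's: confine $Y^{(M_j,N_j)}_k$ to a fixed interval via Lemma~\ref{lemma:comp-0}, use the uniform-in-$j$ tail bound from \cite[Lemma 7.1]{MyOwn6} (via~\eqref{eq:X-repr},~\eqref{eq:bounded-coefficients}) to control the probability that a distant named particle enters that interval, and sum using $x \in \mathcal W$. The only differences are cosmetic---the paper writes the tail bound explicitly with $\Psi$ and the shift $\ol{g}T$, and splits $\eta$ into thirds rather than halves.
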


\begin{proof} Because $x \in \mathcal W$, we have: 
$$
x_i \to \infty\ \mbox{as}\ i \to \infty;\ x_i \to -\infty\ \mbox{as}\ i \to -\infty.
$$
Therefore, there exist $i_{\pm} \in \MZ$ such that for every $i \in \MZ$, 
$$
i \ge i_+\ \Ra\ x_i > u_+ + \ol{g}T;\ \mbox{and}\ i \le i_-\ \Ra\ x_i < u_- - \ol{g}T.
$$
For all $i \in \MZ$ and $j \ge j_i$, let 
$$
A_i^{(j)} := \left\{\exists\, t \in [0, T]: X^{(M_j, N_j)}_i(t) \in [u_-, u_+]\right\}.
$$
Applying \cite[Lemma 7.1]{MyOwn6} and using~\eqref{eq:drift-j}, ~\eqref{eq:diffusion-j},~\eqref{eq:X-repr},~\eqref{eq:bounded-coefficients},~\eqref{eq:series-122}, we get: for $i \ge i_+,\, j \ge j_i$, 
\begin{equation}
\label{eq:estimate-for-probab-1}
\MP\left(A_i^{(j)}\right) \le \MP\left(\min\limits_{t \in [0, T]}X_i^{(M_j, N_j)}(t) \le u_+\right) \le 2\Psi\left(\frac{x_i - u_+ - \ol{g}T}{\ol{\si}\sqrt{T}}\right).
\end{equation}
Similarly, for $i \le i_-$ and $j \ge j_i$, we have:
\begin{equation}
\label{eq:estimate-for-probab-2}
\MP\left(A_i^{(j)}\right) \le \MP\left(\max\limits_{t \in [0, T]}X_i^{(M_j, N_j)}(t) \ge u_-\right) \le 2\Psi\left(\frac{-x_i + u_- + \ol{g}T}{\ol{\si}\sqrt{T}}\right).
\end{equation}
From $x \in \mathcal W$, we have:
\begin{equation}
\label{eq:two-series}
\SL_{i \ge i_+}e^{-\al x_i^2} < \infty,\ \mbox{and}\ \SL_{i \le i_-}e^{-\al x_i^2} < \infty\ \mbox{for all}\ \al > 0.
\end{equation}
Applying \cite[Lemma 7.2]{MyOwn6} and using~\eqref{eq:two-series}, we obtain:
$$
\SL_{i \ge i_+}\Psi\left(\frac{x_i - u_+ - \ol{g}T}{\ol{\si}\sqrt{T}}\right) < \infty,\ \ \mbox{and}\ \ \SL_{i \le i_-}\Psi\left(\frac{-x_i + u_- + \ol{g}T}{\ol{\si}\sqrt{T}}\right) < \infty.
$$
Find $i'_+ > i_+$ large enough and $i'_- < i_-$ small enough so that 
\begin{equation}
\label{eq:estimate-for-series}
\SL_{i \ge i'_+}\Psi\left(\frac{x_i - u_+ - \ol{g}T}{\ol{\si}\sqrt{T}}\right) < \frac{\eta}6,
\ \ \mbox{and}\ \ \SL_{i \le i'_-}\Psi\left(\frac{-x_i + u_- + \ol{g}T}{\ol{\si}\sqrt{T}}\right) < \frac{\eta}6.
\end{equation}
Comparing~\eqref{eq:estimate-for-probab-1},~\eqref{eq:estimate-for-probab-2},~\eqref{eq:estimate-for-series}, we get: for $j \ge J_k := j_{i'_+}\vee j_{i'_-}$, 
\begin{equation}
\label{eq:probability}
\MP\Bigl(\bigcup\limits_{i = i'_+}^{N_j}A_i^{(j)}\Bigr) \le \frac{\eta}3,\ \ \mbox{and}\ \ \MP\Bigl(\bigcup\limits_{i = M_j}^{i'_-}A_i^{(j)}\Bigr) \le \frac{\eta}3.
\end{equation}
Let $I_- := i'_-+1$ and $I_+ := i'_+-1$. It follows from~\eqref{eq:probability} that for all $j \ge J_k$, with probability greater than or equal to $1 - 2\eta/3$, only the particles $X^{(M_j, N_j)}_{I_-}, \ldots, X^{(M_j, N_j)}_{I_+}$,  among the particles $X^{(M_j, N_j)}_i$, $M_j \le i \le N_j$, can ever visit the interval $[u_-, u_+]$ during time interval $[0, T]$. 
Using Lemma~\ref{lemma:comp-0}, choose $u_+$ and $u_-$ so that with probability greater than or equal to $1 - \eta/3$, the particle $Y^{(M_j, N_j)}_k$ stays within $[u_-, u_+]$ during $[0, T]$. Then with probability greater than or equal to $1 - \eta$, the ranked particle $Y^{(M_j, N_j)}_k$ can assume only the following names: $I_-, I_-+1, \ldots, I_+$. 
\end{proof}

\begin{lemma} Take the integers $I_{\pm}$ from Lemma~\ref{lemma:set-of-names}. If the following event happens: 
$$
\left\{\mathcal I^{(j)}_k \subseteq [I_-, I_+]\right\},
$$
then a.s. for every $t \in [0, T]$, we have: $Y^{(M_j, N_j)}_k(t)$ is the $(k - I_- + 1)$st bottom-ranked number among 
$$
X_{I_-}^{(M_j, N_j)}(t), \ldots, X_{I_+}^{(M_j, N_j)}(t).
$$
\label{lemma:suitable-rank}
\end{lemma}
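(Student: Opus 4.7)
The plan is to use continuity of trajectories together with the defining property of $\mathcal I^{(j)}_k$ to pin down the ordering of every particle relative to $Y^{(M_j,N_j)}_k$ over the entire interval $[0,T]$; the claim then reduces to a counting step. The central observation is that because ranks of a finite system only change via collisions, a name $i$ whose process $X_i$ never equals $Y_k$ on $[0,T]$ cannot flip from one side of $Y_k$ to the other.

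First I would note that, since the initial conditions are ranked, $\mathbf{P}_0$ is the identity, so $X^{(M_j,N_j)}_k(0) = Y^{(M_j,N_j)}_k(0)$, and therefore $k \in \mathcal I^{(j)}_k \subseteq [I_-, I_+]$; in particular $I_- \le k \le I_+$. The key step is the following sign-preservation claim: for every name $i \in [M_j, N_j] \setminus \mathcal I^{(j)}_k$, the continuous function $t \mapsto X^{(M_j,N_j)}_i(t) - Y^{(M_j,N_j)}_k(t)$ never vanishes on $[0,T]$ by definition of $\mathcal I^{(j)}_k$, so by the intermediate value theorem its sign is constant. For $i > I_+$, the value of this difference at $t = 0$ is $x_i - x_k \ge 0$; the equality $x_i = x_k$ would imply $X^{(M_j,N_j)}_i(0) = Y^{(M_j,N_j)}_k(0)$ and hence $i \in \mathcal I^{(j)}_k \subseteq [I_-, I_+]$, contradicting $i > I_+$. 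Hence $X^{(M_j,N_j)}_i(t) > Y^{(M_j,N_j)}_k(t)$ for every $t \in [0,T]$, and a symmetric argument yields $X^{(M_j,N_j)}_i(t) < Y^{(M_j,N_j)}_k(t)$ for every $i < I_-$ and every $t \in [0,T]$.

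The rest is bookkeeping. For any $t \in [0,T]$, there are exactly $k - M_j$ particles of rank strictly smaller than $k$. By the previous paragraph, all $I_- - M_j$ particles indexed by $i \in [M_j, I_- - 1]$ lie strictly below $Y^{(M_j,N_j)}_k(t)$ and therefore have rank strictly less than $k$, while the $N_j - I_+$ particles indexed by $i \in [I_+ + 1, N_j]$ all have rank strictly greater than $k$. Hence exactly $(k - M_j) - (I_- - M_j) = k - I_-$ of the particles with names in $[I_-, I_+]$ have rank strictly less than $k$. Since $\mathbf{P}_t(k) \in \mathcal I^{(j)}_k \subseteq [I_-, I_+]$, exactly one of the $I_+ - I_- + 1$ values $X^{(M_j,N_j)}_{I_-}(t), \ldots, X^{(M_j,N_j)}_{I_+}(t)$ equals $Y^{(M_j,N_j)}_k(t)$, and the preceding count places it at position $k - I_- + 1$ from the bottom, which is the desired conclusion.

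I expect the only real obstacle to be the treatment of possible ties in the initial data, which the second paragraph handles by upgrading $x_i \ge x_k$ to a strict inequality for every $i \notin [I_-, I_+]$; once strict separation is established at $t = 0$, continuity propagates the strict ordering throughout $[0, T]$ without any further appeal to absence of ties at positive times.
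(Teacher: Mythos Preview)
Your argument is essentially the paper's: both hinge on the sign-preservation observation that for $i \notin [I_-, I_+]$ the difference $X^{(M_j,N_j)}_i - Y^{(M_j,N_j)}_k$ never vanishes on $[0,T]$ and hence keeps its initial (strict) sign. The paper phrases this as the pair of inequalities \eqref{eq:min} and \eqref{eq:max}, obtained by the same intermediate-value reasoning.

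There is one gap in your final counting step. You correctly handle ties at $t=0$ for names outside $[I_-,I_+]$, but not ties at positive times among particles with names \emph{inside} $[I_-,I_+]$. Your claim ``exactly one of the $I_+ - I_- + 1$ values equals $Y^{(M_j,N_j)}_k(t)$'' fails whenever several of those particles collide with $Y_k$ at time $t$, and such collisions do occur a.s.\ at some times in $[0,T]$. The count you give shows that $k - I_-$ particles in $[I_-, I_+]$ have \emph{rank} strictly below $k$, which only yields value $\le Y_k(t)$, not $<$. One clean fix: since a value strictly below $Y_k(t)$ forces rank $< k$, at most $k - I_-$ of the values are strictly below $Y_k(t)$, and symmetrically at most $I_+ - k$ are strictly above; this forces the $(k - I_- + 1)$st smallest to equal $Y_k(t)$. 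The paper takes the alternative route of first restricting to the dense set of tie-free times, where the ``exactly one'' reasoning does go through, and then extending the identity to all of $[0,T]$ by continuity of both sides.
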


\begin{proof} Fix a $t \in [0, T]$ such that there is no tie at time  $t$ in the system $X^{(M_j, N_j)}$. The set $\Tau$ of these $t$ has full Lebesgue measure $\mes(\cdot)$; that is, $\mes([0, T]\setminus\Tau) = 0$. Let us show that  
\begin{equation}
\label{eq:min}
\mbox{for}\ \ i = M_j, \ldots, I_- - 1,\ \ \mbox{we have:}\ \ X^{(M_j, N_j)}_i(t) < Y_k^{(M_j, N_j)}(t).
\end{equation}
Assume the converse. Recall that $X^{(M_j, N_j)}_i(0) = x_i \le Y_k^{(M_j, N_j)}(0) = x_k$. By continuity, there exists an $s \in [0, t]$ such that $X^{(M_j, N_j)}_i(s) = Y_k^{(M_j, N_j)}(s)$. This means that $i \in \mathcal I^{(j)}_k$. But $i < I_-$, and this contradicts the assumption that the event $\left\{\mathcal I^{(j)}_k \subseteq [I_-, I_+]\right\}$ has happened. This proves~\eqref{eq:min}. Similarly, we can show that 
\begin{equation}
\label{eq:max}
\mbox{for}\ \ i = I_+ + 1, \ldots, N_j,\ \ \mbox{we have:}\ \ X^{(M_j, N_j)}_i(t) > Y_k^{(M_j, N_j)}(t).
\end{equation}
We proved~\eqref{eq:min} and~\eqref{eq:max} for $t \in \Tau$; if~\eqref{eq:min} and~\eqref{eq:max} are true, then the statement Lemma~\ref{lemma:suitable-rank} holds for this $t$. But since $\mes([0, T]\setminus\Tau) = 0$, the set $\Tau$ is dense in $[0, T]$. 
Apply continuity to prove~\eqref{eq:min} and~\eqref{eq:max} for all $t \in [0, T]$ (with non-strict inequalities instead of strict ones). Because ties are resolved in lexicographic order, this completes the proof. 
\end{proof}

\begin{lemma} For every $\eps, \eta > 0$, there exists a $\de > 0$ such that for all $j \ge 1$, we have: 
\label{lemma:final-tight}
\begin{equation}
\label{eq:final-tight}
\varlimsup\limits_{j \to \infty}\MP\left(\oa\Bigl(Y^{(M_j, N_j)}_k, [0, T], \de\Bigr) \ge \eps,\ \ \mathcal I^{(j)}_k \subseteq [I_-, I_+]\right) \le \eta,
\end{equation}
\end{lemma}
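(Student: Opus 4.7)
The plan is to use Lemma~\ref{lemma:suitable-rank} to reduce the modulus of continuity of the ranked particle $Y^{(M_j, N_j)}_k$ to that of a \emph{fixed finite} collection of named particles, and then invoke Lemma~\ref{lemma:tightness-of-X} together with the standard fact that tightness in $C[0, T]$ is equivalent to a uniform modulus-of-continuity bound.

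First, I would recall that, by Lemma~\ref{lemma:suitable-rank}, on the event $\bigl\{\mathcal{I}^{(j)}_k \subseteq [I_-, I_+]\bigr\}$ the process $Y^{(M_j, N_j)}_k(t)$ coincides for every $t \in [0, T]$ with the $(k - I_- + 1)$st smallest value among
$$
X^{(M_j, N_j)}_{I_-}(t), \; X^{(M_j, N_j)}_{I_-+1}(t), \; \ldots, \; X^{(M_j, N_j)}_{I_+}(t),
$$
a fixed finite family of $I_+ - I_- + 1$ named particles (since $I_\pm$ do not depend on $j$). The sort operation is $1$-Lipschitz with respect to the $\ell^\infty$-norm: if $(a_1, \ldots, a_n)$ and $(b_1, \ldots, b_n)$ are real vectors, then the $m$th order statistics satisfy $|a_{(m)} - b_{(m)}| \le \max_i |a_i - b_i|$. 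Applying this pointwise in time gives that on this event,
$$
\oa\bigl(Y^{(M_j, N_j)}_k, [0, T], \de\bigr) \le \max_{I_- \le i \le I_+}\oa\bigl(X^{(M_j, N_j)}_i, [0, T], \de\bigr).
$$

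Next, by Lemma~\ref{lemma:tightness-of-X}, for each $i \in [I_-, I_+]$ the sequence $\bigl(X^{(M_j, N_j)}_i\bigr)_{j \ge j_i}$ is tight in $C[0, T]$. By the Arzel\`a--Ascoli characterization of tightness, for every $\eps > 0$ and every $\eta' > 0$ there exists $\de_i > 0$ such that
$$
\varlimsup_{j \to \infty}\MP\bigl(\oa(X^{(M_j, N_j)}_i, [0, T], \de_i) \ge \eps\bigr) \le \eta'.
$$
Setting $\eta' := \eta / (I_+ - I_- + 1)$ and $\de := \min_{I_- \le i \le I_+}\de_i$ (this is a finite minimum, so $\de > 0$), a union bound yields
$$
\varlimsup_{j \to \infty}\MP\Bigl(\max_{I_- \le i \le I_+}\oa(X^{(M_j, N_j)}_i, [0, T], \de) \ge \eps\Bigr) \le \eta.
$$

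Combining this with the pointwise bound on the event $\bigl\{\mathcal{I}^{(j)}_k \subseteq [I_-, I_+]\bigr\}$ established above gives~\eqref{eq:final-tight}. The only non-routine step is the reduction via Lemma~\ref{lemma:suitable-rank} and the Lipschitz property of order statistics, which is what allows us to replace the dynamically defined ranked particle by a fixed finite family of named Brownian-like processes; everything else is a standard finite union bound.
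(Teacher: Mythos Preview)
Your proof is correct and follows essentially the same approach as the paper: both use Lemma~\ref{lemma:suitable-rank} to identify $Y^{(M_j,N_j)}_k$ on the relevant event with an order statistic of the fixed finite family $X^{(M_j,N_j)}_{I_-},\ldots,X^{(M_j,N_j)}_{I_+}$, invoke the Lipschitz property of order statistics, and then use tightness of the named particles (Lemma~\ref{lemma:tightness-of-X}) together with the Arzel\`a--Ascoli criterion. The only cosmetic difference is that the paper first deduces tightness of the order-statistic process $\tilde Y^{(j)}$ and then applies Arzel\`a--Ascoli, whereas you bound the modulus of continuity directly by the maximum of the coordinate moduli and finish with a union bound; these are equivalent packagings of the same argument.
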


\begin{proof} The sequence $\bigl(\bigl(X^{(M_j, N_j)}_{I_-}, \ldots, X^{(M_j, N_j)}_{I_+}\bigr)\bigr)_{j \ge J_k}$ is tight in $C\left([0, T], \BR^{I_+ - I_- + 1}\right)$. The mapping $C([0, T], \BR^{I_+-I_-+1}) \to C[0, T]$, which maps $(f_1,\ldots, f_{I_+-I_-+1})$ to the $K$th ranked among $f_1(t), \ldots, f_{I_+-I_-+1}(t)$, for every $t \in [0, T]$, is Lipschitz continuous. For every $j \ge J_k$ and $t \ge 0$, define $\tilde{Y}^{(j)}(t)$ to be the $(k - I_- + 1)$th bottom-ranked real number among 
$$
X^{(M_j, N_j)}_{I_-}(t), \ldots, X^{(M_j, N_j)}_{I_+}(t).
$$
Then the sequence  of stochastic processes
$$
\tilde{Y}^{(j)} = (\tilde{Y}^{(j)}(t),\, 0 \le t \le T),\ j \ge J_k,
$$
is tight in $C[0, T]$. Applying the Arzela-Ascoli criterion, we get: there exists $\de > 0$ such that 
$$
\MP\left(\oa\Bigl(\tilde{Y}^{(j)}, [0, T], \de\bigr) > \eps\right) \le \eta.
$$
Together with Lemma~\ref{lemma:suitable-rank}, this proves~\eqref{eq:final-tight}.
\end{proof}

Let us finish the proof of Lemma~\ref{lemma:tightness-of-Y}. To show tightness of $(Y^{(M_j, N_j)}_k)_{j \ge 1}$, we use the Arzela-Ascoli criterion. Fix an $\eps > 0$. We shall prove that  
\begin{equation}
\label{eq:tightness-AA}
\lim\limits_{\de \to 0}\varlimsup\limits_{j \to \infty}\MP\left(\oa\Bigl(Y^{(M_j, N_j)}_k, [0, T], \de\Bigr) \ge \eps\right) = 0.
\end{equation}
To this end, fix an $\eta > 0$ and let us show that there exists a $\de > 0$ such that 
\begin{equation}
\label{eq:tightness-refined}
\varlimsup\limits_{j \to \infty}\MP\left(\oa\Bigl(Y^{(M_j, N_j)}_k, [0, T], \de\Bigr) \ge \eps\right) \le 2\eta.
\end{equation}
But~\eqref{eq:tightness-refined} follows from Lemmata~\ref{lemma:set-of-names} and~\ref{lemma:final-tight}. This completes the proof of Lemma~\ref{lemma:tightness-of-Y}. 

\subsubsection{Proof of Lemma~\ref{lemma:no-ties}} For simplicity of notation, assume $(M'_j, N'_j) = (M_j, N_j)$. Define the event that there is a tie of finitely many particles at time $t$:
$$
E_1 = \{\exists\, k, l \in \MZ,\, k < l\ \ \mbox{such that}\ \ Y_{k-1}(t) < Y_k(t) = Y_{k+1}(t) = \ldots = Y_l(t) < Y_{l+1}(t)\}.
$$ 
Define the event that there is a tie of infinitely many particles at time $t$:
$$
E_2 = \{\exists\, w \in \BR:\ \mbox{for infinitely many}\ i \in \MZ,\ X_i(t) = w\}.
$$
Then we have:
\begin{equation}
\label{eq:big-union}
\{Y\ \mbox{has a tie at time}\ t\} = E_1\cup E_2.
\end{equation}

\smallskip

{\it Step 1.} Let us show that $\MP(E_1) = 0$. For $k, l \in \MZ$ such that $k < l$, and for $q_-, q_+ \in \MQ$, $m = 1, 2, \ldots$ define the following event:
\begin{align*}
D(k, l, q_-, q_+, m) :=& \Bigl\{Y_{k-1}(s) < q_- < Y_k(s) = Y_{k+1}(s) = \ldots = Y_l(s) < q_+ < Y_{l+1}(s)\\ & \ \ \mbox{for all}\ \ s \in [t - m^{-1}, t + m^{-1}]\Bigr\}.
\end{align*}
By continuity of trajectories of $Y_{k-1}, Y_k, \ldots, Y_{l+1}$, we can represent
\begin{equation}
\label{eq:union-1}
E_1 = \bigcup D(k, l,q_-, q_+, m),
\end{equation}
where the union in the right-hand side of~\eqref{eq:union-1} is taken over all 
\begin{equation}
\label{eq:all-parameters}
k, l \in \MZ;\ \ q_-, q_+ \in \MQ,\, q_- < q_+;\ m = 1, 2, \ldots
\end{equation}
Therefore, it suffices to show that 
\begin{equation}
\label{eq:D-event}
\MP\left(D(k, l, q_-, q_+, m)\right) = 0\ \ \mbox{for all}\ \ k, l, q_-, q_+, m\ \ \mbox{from~\eqref{eq:all-parameters}.}
\end{equation}
Assume the converse: that the probability in~\eqref{eq:D-event} is positive. If $D(k, l, q_-, q_+, m)$ happened, then for large enough $j$ we have:
$$
Y_{k-1}^{(M_j, N_j)}(s) < q_- < Y_k^{(M_j, N_j)}(s) \le Y_l^{(M_j, N_j)}(s) < q_+ < Y_{l+1}^{(M_j, N_j)}(s),\ \ s \in \left[t - m^{-1}, t + m^{-1}\right].
$$
By Lemma~\ref{lemma:cut} from Appendix, on the time interval $[t - m^{-1}, t + m^{-1}]$, the collection of random processes
$$
\left(Y_k^{(M_j, N_j)}(\cdot), \ldots, Y_l^{(M_j, N_j)}(\cdot)\right)
$$
behaves as a ranked system of $l - k + 1$ competing Brownian particles with drift coefficients $g_k, \ldots, g_l$, and diffusion coefficients $\si_k^2, \ldots, \si_l^2$, starting from the initial conditions
$$
y^{(j)} := \left(Y_k^{(M_j, N_j)}(t - m^{-1}), \ldots, Y_l^{(M_j, N_j)}(t - m^{-1})\right).
$$
We have the following convergence:
$$
\lim\limits_{j \to \infty}y^{(j)} = y^{(\infty)} := \left(Y_k(t - m^{-1}), \ldots, Y_l(t - m^{-1})\right).
$$
By Feller property given in Lemma~\ref{lemma:Feller} in Appendix, we have: On the time interval $[ t - m^{-1}, t + m^{-1}]$, the system $(Y_k, \ldots, Y_l)$ also behaves as a ranked system of $l - k + 1$ competing Brownian particles with drift coefficients $g_k, \ldots, g_l$ and diffusion coefficients $\si_k^2, \ldots, \si_l^2$, starting from $y^{(\infty)}$. But the probability that such system has a tie at any fixed time is zero, see \cite[Lemma 2.3]{MyOwn6}. 
This completes the proof of~\eqref{eq:D-event}. Combining~\eqref{eq:union-1},~\eqref{eq:D-event}, we get $\MP(E_1) = 0$. 

\smallskip

{\it Step 2.} Now, let us show that $\MP(E_2) = 0$. For $u_-, u_+ \in \BR$, introduce the event $E(u_-, u_+, k)$, which is that infinitely many particles $X_i$ visited $[u_-, u_+]$ and collided with $Y_k$ during the time interval $[0, T]$. Then we have the following representation
\begin{equation}
\label{eq:union-2}
E \subseteq \bigcup E(u_-, u_+, k),
\end{equation}
where the union is taken over all $u_-, u_+ \in \MQ$ such that $u_- < u_+$ and over all $k \in \MZ$. Let us show that
\begin{equation}
\label{eq:E-event}
\MP(E(u_-, u_+, k)) = 0\ \mbox{for all}\ u_-, u_+, k\ \ \mbox{with}\ \ u_- < u_+,\ k \in \MZ.
\end{equation}
It is straightforward to check that  
\begin{equation}
\label{eq:empty}
E(u_-, u_+, k)\cap\{\mathcal I^{(j)}_k \subseteq [I_-, I_+]\} = \varnothing.
\end{equation}
Assume $\MP(E(u_-, u_+, k)) = \zeta > 0$. Apply Lemma~\ref{lemma:set-of-names} to $\zeta$ instead of $\eta$, and arrive at a contradiction with~\eqref{eq:empty}. 
This contradiction proves~\eqref{eq:E-event}. Combining~\eqref{eq:union-2} and~\eqref{eq:E-event}, we get $\MP(E_2) = 0$. 

\subsection{Proof of Lemma~\ref{lemma:comp}} Let us show (a); (b) is similar. Take an approximative sequence $(M_j, N_j)$. Define $\ol{X}^{(M_j, N_j)}$ and $\ol{Y}^{(M_j, N_j)}$ as in Lemma~\ref{lemma:approx}, but for the system $\ol{X}$ instead of $X$. Take approximating sequences of finite systems of competing Brownian particles for each of these two-sided infinite systems. In the notation of Lemma~\ref{lemma:approx}, for every finite subset $I \subseteq \MZ$ and every $t > 0$, we have the following weak convergence:
\begin{equation}
\label{eq:conv-of-X-Y}
\bigl[Y^{(M_j, N_j)}(t)\bigr]_I  \Ra [Y(t)]_I,\ \ \bigl[\ol{Y}^{(M_j, N_j)}(t)\bigr]_I \Ra [\ol{Y}(t)]_I,\ \ j \to \infty.
\end{equation}
By comparison techniques from \cite[Corollary 3.11]{MyOwn2}, we get: 
\begin{equation}
\label{eq:comp-of-Y-333}
\bigl[Y^{(M_j, N_j)}(t)\bigr]_I \preceq \bigl[\ol{Y}^{(M_j, N_j)}(t)\bigr]_I.
\end{equation}
Combining~\eqref{eq:conv-of-X-Y} and~\eqref{eq:comp-of-Y-333} and noting that stochastic comparison is preserved under weak limits, we prove that $[Y(t)]_I \preceq [\ol{Y}(t)]_I$ for every finite subset $I \subseteq \MZ$. Therefore, $Y(t) \preceq \ol{Y}(t)$.

\subsection{Proof of Lemma~\ref{lemma:ranked}} (a) It suffices to show the following two statements:

\smallskip

(a) a.s. there exists only finitely many $n \ge 1$ such that $\min_{t \in [0, T]}X_n(t) \le u_+$;

\smallskip

(b) a.s. there exists only finitely many $n \le -1$ such that $\max_{t \in [0, T]}X_n(t) \ge u_-$.

\smallskip

Let us show (a); the proof of (b) is similar. By the Borel-Cantelli lemma, it suffices to show
\begin{equation}
\label{eq:BC-ranked}
\SL_{n=1}^{\infty}\MP\left(\min\limits_{t \in [0, T]}X_n(t) \le u_+\right) < \infty.
\end{equation}
As in the proof of Lemma~\ref{lemma:tightness-of-X}, we have: for $n \in \MZ$,
\begin{equation}
\label{eq:X-n-expr}
X_n(t) = x_n + \int_0^t\be_n(s)\,\md s + \int_0^t\rho_n(s)\,\md W_n(s),\ t \ge 0,
\end{equation}
where for all $s \ge 0$, $n \in \MZ$, 
\begin{equation}
\label{eq:bdd-coefficients}
\bigl|\be_n(s)\bigr| \le \ol{g},\ \ \bigl|\rho_n(s)\bigr| \le \ol{\si}.
\end{equation}
By \cite[Lemma 7.1]{MyOwn6}, if $n$ is such that $x_n > \ol{g}T + u_+$, then 
\begin{equation}
\label{eq:estimation-of-min}
\MP\left(\min\limits_{t \in [0, T]}X_n(t) \le u_+\right) \le 2\Psi\left(\frac{x_n - \ol{g}T - u_+}{\ol{\si}\sqrt{T}}\right).
\end{equation}
But $x \in \mathcal W$, and therefore 
$$
\SL_{n=1}^{\infty}e^{-\al x_n^2} < \infty\ \mbox{for all}\ \al > 0.
$$
Moreover, $x_n \to \infty$ as $n \to \infty$, hence there exists an $n_0$ such that $x_n > \ol{g}T + u_+$ for $n \ge n_0$. Applying \cite[Lemma 7.2]{MyOwn6}, we have:
\begin{equation}
\label{eq:series-finite}
\SL_{n=n_0}^{\infty}\Psi\left(\frac{x_n - \ol{g}T - u_+}{\ol{\si}\sqrt{T}}\right) < \infty.
\end{equation}
Combining~\eqref{eq:estimation-of-min} and~\eqref{eq:series-finite}, we get~\eqref{eq:BC-ranked}, which completes the proof of Lemma~\ref{lemma:ranked} (a).

\smallskip

(b) Similar to the proof of \cite[Lemma 3.5]{MyOwn6}; follows from Lemma~\ref{lemma:ranked} (a) and similar properties for finite systems.

\subsection{Proof of Theorem~\ref{thm:stationary}} \subsubsection{Overview of the proof} Similarly to the proof of the main result in \cite{MyOwn13}, we approximate this two-sided infinite system by finite systems of competing Brownian particles in stationary gap distributions, with suitably chosen uniformly bounded drifts. These stationary gap distributions have product-of-exponential form, which  match the infinite poduct-of-exponentials distribution $\pi_{a, b}$. Let us describe the desired approximating sequence of finite systems. These are systems of competing Brownian particles:
$$
X^{(j)} = \left(X^{(j)}_{M_j}, \ldots, X^{(j)}_{N_j}\right),\, j \ge 1,
$$
with $(M_j, N_j)$ an approximative sequence (chosen later) from Definition~\ref{defn:approx}, with $M_j \le -j < j < N_j$ for $j \ge 1$; drift coefficients (chosen later)
\begin{equation}
\label{eq:drift-choice}
g_{M_j}^{(j)}, \ldots, g_{N_j}^{(j)};
\end{equation}
and unit diffusion coefficients 
$$
\si_{M_j}^{(j)} = \ldots = \si_{N_j}^{(j)} = 1.
$$
We assume the initial conditions for each system $X^{(j)}$ are ranked, and $X_0^{(j)}(0) = 0$. 
Define the corresponding vector of ranked particles, and the gap process, respectively: 
$$
Y^{(j)} = \left(Y^{(j)}_{M_j}, \ldots, Y^{(j)}_{N_j}\right),\ \ 
Z^{(j)} = \left(Z^{(j)}_{M_j}, \ldots, Z^{(j)}_{N_j-1}\right).
$$

\begin{lemma} For each $j \ge 1$, we can choose an approximative sequence $(M_j, N_j)_{j \ge 1}$, and drift coefficients from~\eqref{eq:drift-choice}, so that the system $X^{(j)}$ has a stationary gap distribution 
$$
Z^{(j)}(t) \sim \bigotimes\limits_{k=M_j}^{N_j-1}\Exp\bigl(\la^{(j)}_k\bigr),\ \ t \ge 0,
$$
and the parameters $\la^{(j)}_k,\, k = M_j, \ldots, N_j-1,\, j \ge 1$, satisfy 
\begin{equation}
\label{eq:same}
g_k^{(j)} = g_k,\ \la^{(j)}_k = \la_k,\ -j \le k \le j.
\end{equation}
Moreover, there exist constants $C_0, C_1, C_2 > 0$ such that
\begin{equation}
\label{eq:g-bdd}
\bigl|g_k^{(j)}\bigr| \le C_0,\ \mbox{for all}\ j \ge 1,\ M_j \le k \le N_j,
\end{equation}
\begin{equation}
\label{eq:sublinear}
|\la_k| \le C_1|k| + C_2,\ \mbox{for all}\ k \in \MZ,
\end{equation}
\begin{equation}
\label{eq:sublinear-new}
\bigl|\la_k^{(j)}\bigr| \le C_1|k| + C_2,\ \mbox{for all}\ j \ge 1,\ M_j \le k < N_j.
\end{equation}
\label{lemma:bounds}
\end{lemma}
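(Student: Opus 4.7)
The plan is to impose the average drift $\bar{g}^{(j)}[M_j:N_j] := -b/2$ and to construct the rates $\la_k^{(j)}$ directly, then recover $g_k^{(j)}$. Under this average constraint, formula \eqref{eq:product-form} gives the recursion $\la_k^{(j)} - \la_{k-1}^{(j)} = 2g_k^{(j)} + b$ for $M_j < k \le N_j - 1$, together with the endpoint identities $g_{M_j}^{(j)} = \la_{M_j}^{(j)}/2 - b/2$ and $g_{N_j}^{(j)} = -\la_{N_j-1}^{(j)}/2 - b/2$ (the latter obtained by summing the recursion and using the average). A direct computation from \eqref{eq:rates} shows that the target sequence itself satisfies $\la_n - \la_{n-1} = 2 g_n + b$; hence setting $\la_k^{(j)} := \la_k$ on the extended central block $-j-1 \le k \le j$ automatically forces $g_k^{(j)} = g_k$ on $-j \le k \le j$. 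The task thus reduces to extending the rates to all of $[M_j, N_j - 1]$ so that they stay strictly positive (for stability), remain bounded by a sublinear function of $|k|$, and have consecutive differences of $O(1)$ so that the recovered drifts, including at $M_j$ and $N_j$, are uniformly bounded.

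I use a three-region construction. Fix constants $\mu_0 > 0$ small and $C > \bar{g} + |b|/2$ large. In the central block $-j-1 \le k \le j$, set $\la_k^{(j)} := \la_k$. In the left and right transition regions $[-j-J_-, -j-1]$ and $[j+1, j+J_+]$, set $g_k^{(j)} := +C$ and $g_k^{(j)} := -C$, respectively; this forces $\la_k^{(j)}$ to decrease (in the direction away from the central block) by $2C + b$ or $2C - b$ per step. Choose the transition lengths $J_\pm$ so that the last rate lands in the window $[\mu_0, \mu_0 + 2C + |b|)$; since $|\la_{-j-1}|, |\la_j| = O(j)$, both $J_\pm = O(j)$. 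In the outer plateaus $[M_j, -j-J_--1]$ and $[j+J_++1, N_j - 1]$, set $g_k^{(j)} := -b/2$, keeping $\la_k^{(j)}$ constant at the plateau value. Finally choose $M_j \le -j - J_- - 1$ and $N_j \ge j + J_+ + 1$, with the additional requirement that $M_{j+1} \le M_j$ and $N_j \le N_{j+1}$ to make $(M_j, N_j)_{j \ge 1}$ an approximative sequence in the sense of Definition~\ref{defn:approx}.

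Every rate $\la_k^{(j)}$ is strictly positive (by hypothesis of Theorem~\ref{thm:stationary} on the central block, and by the explicit floor $\mu_0$ elsewhere), so the stability condition \eqref{eq:stability-intro} holds and the stationary gap distribution is $\bigotimes_k \Exp(\la_k^{(j)})$ by \eqref{eq:product-form}. The drift bound \eqref{eq:g-bdd} holds with $C_0 := \max(\bar{g}, C, \mu_0/2 + C + |b|/2)$: central block drifts are bounded by $\bar{g}$, transition drifts are $\pm C$, plateau drifts equal $-b/2$, and the two forced boundary drifts at $M_j, N_j$ are controlled by the plateau rate via the endpoint identities. The bound \eqref{eq:sublinear} for $\la_k$ is immediate from $|\Phi_{k+1}(g)| \le \bar{g}(|k|+1)$, and \eqref{eq:sublinear-new} for $\la_k^{(j)}$ holds because in the central block it reduces to \eqref{eq:sublinear}, and in the transition or plateau regions $\la_k^{(j)} \le \max(|\la_{-j-1}|, |\la_j|) + O(1)$ while $|k| \ge j$, giving an $O(|k|)$ bound.

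The main obstacle is balancing two apparently incompatible requirements: the boundary drifts $g_{M_j}^{(j)}$ and $g_{N_j}^{(j)}$ are rigidly determined by the boundary rates $\la_{M_j}^{(j)}$ and $\la_{N_j-1}^{(j)}$, which must therefore be $O(1)$; yet the central-block rates $\la_{\pm j}^{(j)} = \la_{\pm j}$ are prescribed and may be as large as $O(j)$. A transition region of length $O(j)$ with constant drifts $\pm C$ is precisely what is needed to dissipate this gap while keeping all drifts uniformly bounded, and is the reason why $-M_j$ and $N_j$ must grow at least linearly in $j$. Consistency of the global average $\bar{g}^{(j)} = -b/2$ with the prescribed central block and plateaus is automatic from the formula for $g_{N_j}^{(j)}$, so no additional balancing is required.
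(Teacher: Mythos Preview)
Your construction is correct and achieves the lemma, but it differs from the paper's in a way worth noting. The paper uses only two regions per side: the central block $[-j,j]$ where $g_k^{(j)}=g_k$, and a single constant-drift tail $g_k^{(j)}=c_j^{+}$ for $j+1\le k\le N_j$ (and symmetrically on the left). With a constant tail drift, the boundary-value problem~\eqref{eq:difference-finite}--\eqref{eq:BV} forces $(\la_j^{(j)},\ldots,\la_{N_j}^{(j)})$ to be an arithmetic progression hitting $0$ at $k=N_j$, so $\la_{j+1}^{(j)}=\tfrac{m_j-1}{m_j}\la_j$ with $m_j=N_j-j$. Plugging $k=j$ into the difference equation then solves for $c_j^{+}=-\tfrac12(\la_j-\la_{j-1})-\tfrac{1}{2m_j}\la_j+g_j$, and choosing $m_j\ge\la_j$ makes $\sup_j|c_j^{+}|<\infty$. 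There is no plateau and no separately forced endpoint drift: the single tail constant $c_j^{+}$ already absorbs the boundary condition.

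Your three-region design (central, steep transition with fixed slope $\pm C$, then a plateau at level $\approx\mu_0$) accomplishes the same thing by instead fixing the tail slope and letting the transition length $J_{\pm}=O(j)$ absorb the size of $\la_{\pm j}$. It is slightly heavier bookkeeping (you must track that the endpoint drifts $g_{M_j}^{(j)},g_{N_j}^{(j)}$ are forced by the endpoint identities and are \emph{not} the plateau value $-b/2$; your plateau interval $[M_j,-j-J_--1]$ for drifts should therefore start at $M_j+1$), but the self-consistency check you sketch---that defining all $g_k^{(j)}$ from the rates via $g_k^{(j)}=(\la_k^{(j)}-\la_{k-1}^{(j)})/2-b/2$ and the two endpoint formulas automatically yields average $-b/2$---is correct, so the rates you built are indeed the stationary ones. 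What your route buys is an explicit uniform lower bound $\mu_0$ on the tail rates; what the paper's route buys is a shorter argument with one free parameter ($m_j$) instead of three ($C,\mu_0,J_\pm$).
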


\begin{lemma}
The distribution $\pi_{a, b}$ is supported on $\mathcal V$.
\label{lemma:support}
\end{lemma}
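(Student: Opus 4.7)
To prove $\pi_{a,b}$ is supported on $\mathcal V$, let $Z = (Z_k)_{k \in \MZ} \sim \pi_{a,b}$ (independent $Z_k \sim \Exp(\la_k)$). By~\eqref{eq:V-W}, $Z \in \mathcal V$ almost surely is equivalent to the almost sure finiteness of $\SL_{n \in \MZ} e^{-\alpha \Phi_n(Z)^2}$ for every $\alpha > 0$. Set $S_n := \SL_{k=0}^{n-1} Z_k = \Phi_n(Z)$ for $n \ge 1$ and $T_n := \SL_{k=1}^n Z_{-k} = -\Phi_{-n}(Z)$ for $n \ge 1$ (the $n=0$ term contributes just $1$). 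The two halves $\SL e^{-\alpha S_n^2}$ and $\SL e^{-\alpha T_n^2}$ are treated symmetrically, so I focus on the first.

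The plan is to combine the linear upper bound $\la_k \le C_1 k + C_2$ from Lemma~\ref{lemma:bounds} (see~\eqref{eq:sublinear}) with an exponential Chernoff estimate to show that $S_n \ge c\log n$ eventually, a.s., for some $c > 0$; this forces $e^{-\alpha S_n^2} \le n^{-\alpha c^2 \log n}$ for large $n$, which is easily summable. Concretely, for any $\beta > 0$, independence and the exponential MGF give
$$
\ME\bigl[e^{-\beta S_n}\bigr] = \prod_{k=0}^{n-1}\frac{\la_k}{\la_k + \beta}.
$$
I will lower-bound the negative log of this using $\log(1+x) \ge x/(1+x)$ for $x \ge 0$ applied at $x = \beta/\la_k$, so that, since $\la_k \le C_1 k + C_2$,
$$
-\log \ME\bigl[e^{-\beta S_n}\bigr] \ge \SL_{k=0}^{n-1}\frac{\beta}{\la_k + \beta} \ge \SL_{k=0}^{n-1}\frac{\beta}{C_1 k + C_2 + \beta} \ge \frac{\beta}{C_1}\log n - K_\beta
$$
for a constant $K_\beta$ independent of $n$. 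Markov's inequality then yields $\MP(S_n \le c\log n) \le e^{K_\beta}\, n^{-\beta(1/C_1 - c)}$ for any $c > 0$.

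To conclude, I will fix $c \in (0, 1/C_1)$ and take $\beta$ so large that $\beta(1/C_1 - c) > 1$; the Borel--Cantelli lemma then gives $S_n > c\log n$ for all $n \ge N(\omega)$ a.s., whence $\SL_{n \ge N(\omega)} e^{-\alpha S_n^2} \le \SL n^{-\alpha c^2 \log n} < \infty$ a.s., while the finitely many earlier terms are each bounded by $1$. The identical argument on the negative side (using $\la_{-k} \le C_1 k + C_2$) handles $T_n$, completing the proof.

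The one delicate point is that Lemma~\ref{lemma:bounds} supplies no lower bound on the individual rates $\la_k$, only an upper bound. This is exactly what the inequality $\log(1+x) \ge x/(1+x)$ is designed to handle: small $\la_k$ only enlarges $Z_k$ (and thus $S_n$), so it actually helps, and the bound $\beta/(\la_k+\beta) \le 1$ prevents any divergence when $\beta/\la_k$ is large, avoiding the need for any truncation argument.
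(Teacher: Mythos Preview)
Your proof is correct. Both you and the paper reduce to the linear upper bound $\la_k \le C_1|k| + C_2$ from Lemma~\ref{lemma:bounds}, and then argue that the partial sums $S_n$ grow at least logarithmically, which forces $\sum e^{-\al S_n^2}<\infty$. The difference is in how that growth is established. The paper uses a stochastic domination step: since $\Exp(C_1+C_2|k|)\preceq\Exp(\la_k)$, it couples $z_k\ge\tilde z_k$ with $\tilde z_k\sim\Exp(C_1+C_2|k|)$, compares $x_n\ge\tilde x_n$, and then invokes the auxiliary Lemma~\ref{lemma:nice} in the Appendix (which in turn cites \cite[Lemma~4.5]{MyOwn6}) to handle the concrete sequence $(\tilde x_n)$. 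You instead give a direct, self-contained Chernoff/Borel--Cantelli argument on the original variables, exploiting that small $\la_k$ only helps. Your route avoids both the coupling and the external citation, at the cost of a short MGF computation; the paper's route is more modular and reuses machinery already in place for the one-sided theory.
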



\noindent Similarly to the proof of Lemma~\ref{lemma:approx}, we need to show the following statements. 

\begin{lemma}
\label{lemma:new-tight-X}
For every $n \in \MZ$ and $T > 0$, the sequence $(X^{(j)}_n)_{j \ge j_n}$ is tight in $C([0, T], \BR)$.
\end{lemma}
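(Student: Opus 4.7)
The plan is to imitate the proof of Lemma~\ref{lemma:tightness-of-X}, with the sole additional ingredient that the initial position of the $n$th particle is now random (rather than a fixed point) and must itself be shown to be tight. Fix $n \in \MZ$ and restrict to $j \ge j_n := |n|$, so that $M_j \le n \le N_j$. As in~\eqref{eq:X-repr}, we can write
$$
X^{(j)}_n(t) = X^{(j)}_n(0) + \int_0^t\beta^{(j)}_n(s)\,\md s + W^{(j)}_n(t),
$$
where $\beta^{(j)}_n(s) := \SL_{k = M_j}^{N_j}1\bigl(X^{(j)}_n(s)\ \text{has rank}\ k\bigr)\,g_k^{(j)}$ and $W^{(j)}_n$ is a standard Brownian motion. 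By~\eqref{eq:g-bdd} of Lemma~\ref{lemma:bounds}, $|\beta^{(j)}_n(s)| \le C_0$ uniformly in $j$ and $s$, so the drift integral is Lipschitz in $t$ with deterministic constant $C_0$, hence tight in $C([0, T], \BR)$. The processes $W^{(j)}_n$ all have the single Wiener law, so their sequence is trivially tight.

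It remains to prove tightness of the random variable $X^{(j)}_n(0)$. Since the initial conditions are ranked with $X^{(j)}_0(0) = 0$ and $Z^{(j)}(0) \sim \bigotimes_{k = M_j}^{N_j - 1}\Exp\bigl(\la^{(j)}_k\bigr)$, for $n \ge 1$ and $j \ge n$ we have
$$
X^{(j)}_n(0) = \SL_{k = 0}^{n-1}Z^{(j)}_k(0),
$$
where the summands are independent and $Z^{(j)}_k(0) \sim \Exp\bigl(\la^{(j)}_k\bigr)$. By~\eqref{eq:same} of Lemma~\ref{lemma:bounds}, $\la^{(j)}_k = \la_k$ whenever $|k| \le j$, and in particular for all $0 \le k \le n - 1$ once $j \ge n$. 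Consequently, for every $j \ge n$ the law of $X^{(j)}_n(0)$ is exactly the convolution of the $n$ fixed exponential laws $\Exp(\la_0), \ldots, \Exp(\la_{n-1})$, so this sequence of random variables is trivially tight in $\BR$. The case $n \le -1$ is handled identically, using the negative-index branch of $\Phi$: $X^{(j)}_n(0) = -\sum_{k=n}^{-1}Z^{(j)}_k(0)$, with $|k| \le |n| \le j$ so again $\la^{(j)}_k = \la_k$.

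Adding the three tight components (each a sequence in $C([0, T], \BR)$) and applying the Arzela-Ascoli tightness criterion yields tightness of $(X^{(j)}_n)_{j \ge j_n}$. The only potentially delicate point, compared to Lemma~\ref{lemma:tightness-of-X} with its deterministic initial condition, is the random starting position $X^{(j)}_n(0)$; but the stability identity $\la^{(j)}_k = \la_k$ from Lemma~\ref{lemma:bounds} collapses this into a single $j$-independent distribution, so no real obstacle arises. I expect the remainder of the argument (the drift bound and the Brownian contribution) to be essentially verbatim from the proof of Lemma~\ref{lemma:tightness-of-X}.
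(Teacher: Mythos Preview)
Your proof is correct and follows essentially the same approach as the paper: both arguments reduce to Lemma~\ref{lemma:tightness-of-X} after observing that the only new issue is the random initial position, and both handle this by using~\eqref{eq:same} to show that for $j \ge |n|$ the law of $X^{(j)}_n(0)$ is actually independent of $j$ (a fixed convolution of the exponentials $\Exp(\la_0),\ldots,\Exp(\la_{n-1})$, or their negative-index analogues). The paper records this as a separate sub-lemma, but the content is identical to what you wrote.
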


\begin{lemma}
\label{lemma:new-tight-Y}
For every $k \in \MZ$ and $T > 0$, the sequence $(Y^{(j)}_k)_{j \ge j_k}$ is tight in $C([0, T], \BR)$.
\end{lemma}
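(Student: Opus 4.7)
The proof follows the same three-step blueprint as Lemma~\ref{lemma:tightness-of-Y}; the only essential new features are that the initial conditions $X_n^{(j)}(0) = Y_n^{(j)}(0)$ are random (drawn from the stationary product-of-exponentials distribution), and the drifts depend on $j$ (but are uniformly bounded by $C_0$ thanks to~\eqref{eq:g-bdd}).

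First, I would establish the analogue of Lemma~\ref{lemma:comp-0}: for every $\eta > 0$ there exist $u_\pm \in \BR$ such that
$$
\MP\bigl(\forall\, t \in [0,T],\ u_- \le Y_k^{(j)}(t) \le u_+\bigr) \ge 1 - \eta
$$
for all sufficiently large $j$. Writing $Y_k^{(j)}(t) = Y_0^{(j)}(t) + \SL_{l=0}^{k-1} Z_l^{(j)}(t)$ (for $k \ge 0$; the case $k < 0$ is symmetric), stationarity of the gap process together with~\eqref{eq:same} reduces this to bounding $\max_{t \in [0,T]} |Y_0^{(j)}(t)|$ and $\max_{t \in [0,T]} \SL_{l=0}^{k-1} Z_l^{(j)}(t)$ uniformly in $j$. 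For the latter it suffices to control the fixed mean $\SL_{l=0}^{k-1} 1/\la_l^{(j)} = \SL_{l=0}^{k-1} 1/\la_l$ (valid for $j \ge k$) and apply Markov's inequality to the monotone running maximum. For the former, combine the SDE for $Y_0^{(j)}$ from Lemma~\ref{lemma:ranked} with the stationary identity $\EE\bigl[L^{(j)}_{(k,k+1)}(T)\bigr] = \la_k^{(j)} T$ (obtained by taking expectations in the Tanaka decomposition of the stationary gap $Z_k^{(j)}$ and inducting outward from the no-local-time boundary condition at the bottom rank $M_j$), together with Markov's inequality and the fact that $\la_{-1}^{(j)}, \la_0^{(j)}$ stabilize to $\la_{-1}, \la_0$ for $j \ge 1$.

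Second, I would prove the analogue of Lemma~\ref{lemma:set-of-names}: with high probability uniformly in $j$, the set of names visited by $Y_k^{(j)}$, namely $\mathcal I_k^{(j)} := \{i \mid \exists\, t \in [0,T],\, X_i^{(j)}(t) = Y_k^{(j)}(t)\}$, is contained in a deterministic finite interval $[I_-, I_+]$. The only new issue is that $x_n^{(j)} := Y_n^{(j)}(0)$ is random. Using~\eqref{eq:sublinear-new} and positivity of $\la_n$, the expectation $\EE[Y_n^{(j)}(0)] = \SL_{l=0}^{n-1}1/\la_l^{(j)}$ grows at least logarithmically in $|n|$ for large $|n|$, and concentration of sums of independent exponentials combined with Borel--Cantelli yields $x^{(j)} \in \mathcal W$ almost surely. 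The Gaussian tail estimates of~\cite[Lemmata 7.1, 7.2]{MyOwn6}, applied to each $X_i^{(j)}$ via an SDE representation with uniformly bounded drift and diffusion coefficients analogous to~\eqref{eq:X-repr}--\eqref{eq:bounded-coefficients}, then give a summable bound uniform in $j$, exactly as in the proof of Lemma~\ref{lemma:set-of-names}.

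On the intersection of the two events above, the argument of Lemma~\ref{lemma:suitable-rank} shows that $Y_k^{(j)}(t)$ equals the $(k - I_- + 1)$-th smallest of $\bigl(X_i^{(j)}(t)\bigr)_{I_- \le i \le I_+}$; Lipschitz continuity of the rank-ordering map together with Lemma~\ref{lemma:new-tight-X} then yields the desired modulus-of-continuity bound via the Arzela-Ascoli criterion. The main technical obstacle is verifying $x^{(j)} \in \mathcal W$ almost surely \emph{uniformly in $j$}, which requires concentration estimates for sums of independent exponentials with $j$-dependent rates; once this is in hand, however, the summation bounds collapse to the same structure as in the proof of Lemma~\ref{lemma:tightness-of-Y}.
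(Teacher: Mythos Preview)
Your three-step blueprint matches the paper's, but the execution in Steps 1 and 2 differs from the paper and contains gaps.

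In Step 1, the sum $\SL_{l=0}^{k-1} Z_l^{(j)}(t)$ is \emph{not} monotone in $t$, so ``Markov's inequality to the monotone running maximum'' does not bound $\max_{t\in[0,T]}\SL_l Z_l^{(j)}(t)$ from its pointwise mean $\SL_l 1/\la_l$. The running maximum is monotone, but its mean is not the pointwise mean. The easy fix is to drop the detour through $Y_0^{(j)}$ and apply your local-time idea directly to the ranked SDE for $Y_k^{(j)}$: for $j\ge|k|$ the law of $Y_k^{(j)}(0)$ is $j$-independent (Lemma~4.16), $g_k^{(j)}=g_k$, and the terminal local times have $j$-independent means $\la_{k-1}T$, $\la_kT$; since local times are monotone, Markov then controls $\max_{t\le T}|Y_k^{(j)}(t)|$ uniformly in $j$. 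The paper instead compares $Y^{(j)}$ with a one-sided infinite system $\tilde X=(\tilde X_n)_{n\ge k}$ having constant drift $-C_0$, unit diffusions, and smaller initial gaps $\tilde z_n\sim\Exp(C_1+C_2|n|)$, then invokes \cite[Corollary 3.9]{MyOwn2} to obtain $\tilde Y_k(t)\le Y_k^{(j)}(t)$. Your local-time route is more direct here; the paper's comparison route has the advantage that it reuses the machinery already built for Lemma~\ref{lemma:comp-0}.

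The real gap is in Step 2. You correctly flag uniformity in $j$ as the obstacle, but ``concentration estimates for sums of independent exponentials with $j$-dependent rates'' is not carried out, and it is not clear how you would make the resulting bounds uniform in $j$ when the rates $\la_n^{(j)}$ for $|n|>j$ genuinely vary with $j$ (they decay linearly to zero at the boundary). The paper resolves this with a single stochastic-domination coupling: from~\eqref{eq:sublinear-new} one has $\la_n^{(j)}\le C_1|n|+C_2$, hence $z_n^{(j)}\succeq\tilde z_n\sim\Exp(C_1+C_2|n|)$, and one can realize $\tilde z_n\le z_n^{(j)}$ a.s. The comparison points $\tilde x_n:=X_k^{(j)}(0)+\tilde z_k+\ldots+\tilde z_{n-1}$ then satisfy $\tilde x_n\le x_n^{(j)}$ for all large $j$, and since the law of $\tilde x_n$ is $j$-independent, Lemma~\ref{lemma:nice} gives $(\tilde x_n)\in\mathcal W$ once and for all. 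The Gaussian tail estimate~\eqref{eq:estimate-for-probab-1} then becomes $\MP(A_i^{(j)})\le 2\Psi\bigl((\tilde x_i-u_+-\ol gT)/(\ol\si\sqrt T)\bigr)$, a bound independent of $j$, and summability follows from \cite[Lemma 7.2]{MyOwn6}. This coupling delivers the needed uniformity without any concentration argument; it is the key device you are missing.
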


Assume that Lemmata~\ref{lemma:bounds},~\ref{lemma:support},~\ref{lemma:new-tight-X},~\ref{lemma:new-tight-Y}, are proved. Let us complete the proof of Theorem~\ref{thm:stationary}. As in the proof of Lemma~\ref{lemma:approx}, there exists an approximative subsequence $(M_{l_s}, N_{l_s})$ of $(M_j, N_j)$ such that for every finite subset $I \subseteq \MZ$ and every $T > 0$, we have:
\begin{equation}
\label{eq:Conv}
\left([X^{(l_s)}]_I, [Y^{(l_s)}]_I\right) \Ra ([X]_I, [Y]_I),\ \mbox{in}\ C([0, T], \BR^{2|I|}),\ \ s \to \infty.
\end{equation}
Here, $X = (X_i)_{i \in \MZ}$ is a two-sided infinite system of competing Brownian particles with drift coefficients $g_n,\ n \in \MZ$ (we have these drift coefficient because of~\eqref{eq:same}), and unit diffusion coefficients, and $Y = (Y_k)_{k \in \MZ}$ is its corresponding system of ranked particles. From~\eqref{eq:Conv}, for every $k \ge 1$,
\begin{equation}
\label{eq:Conv-Gap}
\left(Z^{(l_s)}_{-k}, \ldots, Z^{(l_s)}_k\right) \Ra \left(Z_{-k}, \ldots, Z_k\right)\ \mbox{in}\ C([0, T], \BR^{2k+1})\ \mbox{as}\ s \to \infty. 
\end{equation}
For every $t \ge 0$ and $s$ large enough so that $l_s \ge k$, we have:
\begin{equation}
\label{eq:finite-approx}
\left(Z^{(l_s)}_{-k}(t),\ldots, Z^{(l_s)}_{k}(t)\right) \sim \bigotimes\limits_{m=-k}^{k}\Exp\left(\la_m\right).
\end{equation}
Combining~\eqref{eq:Conv-Gap} with~\eqref{eq:finite-approx}, we have:  for $t \ge 0$ and $k \ge 1$, 
$$
(Z_{-k}(t), \ldots, Z_{k}(t)) \sim \bigotimes\limits_{m=-k}^{k}\Exp\left(\la_m\right).
$$
Thus $Z(t) \sim \pi_{a, b}$ for all $t \ge 0$. This completes the proof of Theorem~\ref{thm:stationary}.

\subsubsection{Proof of Lemma~\ref{lemma:bounds}} By Remark~\ref{rmk:BVP} from Appendix, the sequence $(\la^{(j)}_k)_{M_j \le k < N_j}$ is a unique solution to the following  difference equation similar to~\eqref{eq:difference}, 
\begin{equation}
\label{eq:difference-finite}
\frac12\la_{k-1}^{(j)} - \la_k^{(j)} + \frac12\la_{k+1}^{(j)} = g_{k+1}^{(j)} - g_k^{(j)},\ \ k = M_j, \ldots, N_j - 1,
\end{equation}
together with added boundary conditions
\begin{equation}
\label{eq:BV}
\la^{(j)}_{M_j-1} = \la^{(j)}_{N_j} = 0.
\end{equation}
Assume that, for some parameters $c^{\pm}_j$ to be determined later,   
\begin{equation}
\label{eq:special-tails}
g^{(j)}_{j+1} = \ldots = g^{(j)}_{N_j} = c_j^+,\ g^{(j)}_{M_j} = \ldots = g^{(j)}_{-j-1} = c^-_j.
\end{equation}
Knowing~\eqref{eq:same}, ~\eqref{eq:BV},~\eqref{eq:difference-finite},
~\eqref{eq:special-tails}, let us solve for $\la^{(j)}_k,\, j < k < N_j$, and $c^+_j$. We have:
$$
\frac12\la_{k-1}^{(j)} - \la_k^{(j)} + \frac12\la_{k+1}^{(j)} = 0,\ \ k = j+1, \ldots, N_j - 1.
$$
Therefore, $(\la_j^{(j)}, \ldots, \la_{N_j}^{(j)})$ is a linear sequence (arithmetic progression). Together with the second equality in~\eqref{eq:BV}, this means 
\begin{equation}
\label{eq:linear-lambda}
\la_{k}^{(j)} = (N_j - k)\la_{N_j - 1}^{(j)},\ \ k = j, \ldots, N_j.
\end{equation}
In particular, letting $k = j$ in~\eqref{eq:linear-lambda}, and applying~\eqref{eq:same}, we get:
\begin{equation}
\label{eq:932}
\la_j = (N_j - j)\la_{N_j - 1}^{(j)}.
\end{equation}
Comparing $\la_{j}^{(j)}$ and $\la_{j+1}^{(j)}$ from~\eqref{eq:linear-lambda} and~\eqref{eq:932}, we get:
\begin{equation}
\label{eq:Ratio}
\la_{j+1}^{(j)} = \frac{m_j - 1}{m_j}\la_j,\ \ m_j := N_j - j.
\end{equation}
From~\eqref{eq:same}, we get: $\la_{j-1}^{(j)} = \la_{j-1}$. Plug $k = j$ into~\eqref{eq:difference-finite} and get:
\begin{equation}
\label{eq:1133}
\frac12\la_{j-1} - \la_j + \frac{m_j-1}{2m_j}\la_j = c_j^+ - g_j.
\end{equation}
Solve~\eqref{eq:1133} for $c_j^+$: 
\begin{equation}
\label{eq:1139}
c_j^+ = -\frac12\left(\la_j - \la_{j-1}\right) - \frac1{2m_j}\la_j + g_j.
\end{equation}
From~\eqref{eq:rates} and~\eqref{eq:bdd-seq}, it is easy to see that 
$$
\sup\limits_{j \in \MZ}\left|\la_j - \la_{j-1}\right| < \infty.
$$
It suffices to take $m_j$ large enough, say $m_j \ge \la_j$ (or, equivalently, $N_j \ge j + \la_j$), to make the right-hand side of~\eqref{eq:1139} bounded. Thus, we can ensure that 
\begin{equation}
\label{eq:bounded-c-plus}
\sup\limits_{j \ge 1}|c^+_j| < \infty.
\end{equation}
Similarly, by a suitable choice of $c^-_j$ we can ensure that 
\begin{equation}
\label{eq:bounded-c-minus}
\sup\limits_{j \ge 1}|c^-_j| < \infty.
\end{equation}
Using~\eqref{eq:bounded-c-plus},~\eqref{eq:bounded-c-minus}, and $\sup_{n \in \MZ}|g_n| < \infty$, it is easy to check that~\eqref{eq:g-bdd} holds:
$$
\sup\limits_{j, k}\bigl|g^{(j)}_k\bigr| \le \max\bigl(\sup\limits_{j \ge 1}\bigl|c_j^+\bigr|,\ \sup\limits_{j \ge 1}\bigl|c_j^-\bigr|,\ \sup\limits_{k \in \MZ}|g_k|\bigr) =: C_0 < \infty.
$$
Thus we constructed a required sequence of finite systems of competing Brownian particles which satisfies~\eqref{eq:same} and~\eqref{eq:g-bdd}. The estimate~\eqref{eq:sublinear} follows immediately from~\eqref{eq:rates}, combined with~\eqref{eq:bdd-seq}. Next, apply~\eqref{eq:difference-mu} from Appendix to our system: For $k \ge 0$, we get:
\begin{equation}
\label{eq:representation-of-lambda}
\la^{(j)}_k = \la_0^{(j)} - 2k\ol{g}^{(j)} + 2\bigl(g^{(j)}_{1} + \ldots + g^{(j)}_{k}\bigr),\ \ 
\end{equation}
\begin{equation}
\label{eq:average}
\mbox{where}\ \  \ol{g}^{(j)} := \frac1{N_j - M_j + 1}\bigl(g^{(j)}_{M_j} + \ldots + g^{(j)}_{N_j}\bigr).
\end{equation}
It follows from~\eqref{eq:g-bdd} and~\eqref{eq:average} that
\begin{equation}
\label{eq:average-bdd}
\sup\limits_{j \ge 1}\left|\ol{g}^{(j)}\right| \le C_0 < \infty.
\end{equation}
Note that $\la_0^{(j)} = \la_0$ for all $j \ge 1$. Combining~\eqref{eq:representation-of-lambda} with~\eqref{eq:g-bdd} and~\eqref{eq:average-bdd}, we get:
$$
\bigl|\la^{(j)}_k\bigr| \le |\la_0| + 4|k|C_0.
$$
This proves~\eqref{eq:sublinear-new}. The case $k \le 0$ is treated similarly. 

\subsubsection{Proof of Lemma~\ref{lemma:support}} Let $z \sim \pi_{a, b}$, and let $x := \Phi(z)$. From~\eqref{eq:V-W}, we have: $z \in \mathcal V$ if and only if $x \in \mathcal W$. To show $x \in \mathcal W$ a.s., we need to prove the two following statements:
\begin{equation}
\label{eq:series-plus}
\SL_{n \ge 1}e^{-\al x_n^2} < \infty\ \mbox{a.s. for all}\ \al > 0,
\end{equation}
\begin{equation}
\label{eq:series-minus}
\SL_{n \le -1}e^{-\al x_n^2} < \infty\ \mbox{a.s. for all}\ \al > 0.
\end{equation}
Let us show~\eqref{eq:series-plus}; ~\eqref{eq:series-minus} is similar. Use that 
\begin{equation}
\label{eq:sum-of-z}
x_n = z_0 + \ldots + z_{n-1},\, n \ge 1.
\end{equation}
From the estimate~\eqref{eq:sublinear}, we have:
$$
\bigotimes\limits_{n=0}^{\infty}\Exp\left(C_1 + C_2n\right) \preceq \bigotimes\limits_{n=0}^{\infty}\Exp\left(\la_n\right) \sim z := (z_n)_{n \ge 1}.
$$
Therefore, we can find independent $\tilde{z}_n \sim \Exp(C_1 + C_2n),\, n \ge 0$, such that 
\begin{equation}
\label{eq:coupling}
z_n \ge \tilde{z}_n\ \mbox{for all}\ n \ge 0.
\end{equation}
Comparing~\eqref{eq:sum-of-z} and~\eqref{eq:coupling}, we get:
\begin{equation}
\label{eq:comparing-sums}
x_n = z_0 + \ldots + z_{n-1} \ge \tilde{x}_n := \tilde{z}_0 + \ldots + \tilde{z}_{n-1},\, n \ge 1.
\end{equation}
Take an $\al > 0$ and apply~\eqref{eq:comparing-sums} to the sum in~\eqref{eq:series-plus}:
\begin{equation}
\label{eq:comparing-series}
\SL_{n = 1}^{\infty}e^{-\al x_n^2} \le \SL_{n = 1}^{\infty}e^{-\al\tilde{x}_n^2}.
\end{equation}
Apply Lemma~\ref{lemma:nice} to $(\tilde{x}_n)_{n \ge 1}$. Together with~\eqref{eq:comparing-series}, this completes the proof of Lemma~\ref{lemma:support}. 

\subsubsection{Proof of Lemma~\ref{lemma:new-tight-X}} Similar to Lemma~\ref{lemma:tightness-of-X}, except the following observation: Initial conditions $X^{(j)}_k(0),\, k \in \MZ$, are in general dependent on $j$. Recall that initial conditions of each system $X^{(j)}$ are ranked. That is, $X^{(j)}_k(0) = Y^{(j)}_k(0)$ for all $k \in \MZ$ and $j \ge 1$. To adjust the proof of Lemma~\ref{lemma:tightness-of-X}, we need only to show the following statement.  

\begin{lemma}
Fix a $k \in \MZ$ and take a $j \ge |k|$. Then the distribution of $X^{(j)}_k(0) = Y^{(j)}_k(0)$ is independent of $j$. 
\end{lemma}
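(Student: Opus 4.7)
The plan is to express $Y^{(j)}_k(0)$ as a telescoping sum of gaps anchored at index $0$ and then invoke the matching condition~\eqref{eq:same} from Lemma~\ref{lemma:bounds}. Since the initial conditions of $X^{(j)}$ are ranked, we have $X^{(j)}_k(0) = Y^{(j)}_k(0)$, and by the normalization imposed in the construction, $Y^{(j)}_0(0) = X^{(j)}_0(0) = 0$. By the definition of the gap process we obtain, for $k \ge 0$,
$$
Y^{(j)}_k(0) = Y^{(j)}_0(0) + \SL_{i=0}^{k-1} Z^{(j)}_i(0) = \SL_{i=0}^{k-1} Z^{(j)}_i(0),
$$
and, symmetrically, for $k < 0$,
$$
Y^{(j)}_k(0) = -\SL_{i=k}^{-1} Z^{(j)}_i(0).
$$

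Next, I would use the fact that $X^{(j)}$ is started in its stationary gap distribution, so
$$
Z^{(j)}(0) \sim \bigotimes_{i=M_j}^{N_j-1} \Exp\bigl(\la^{(j)}_i\bigr),
$$
i.e.\ the coordinates $Z^{(j)}_i(0)$ are independent, with $Z^{(j)}_i(0) \sim \Exp(\la^{(j)}_i)$. For $j \ge |k|$, each index $i$ appearing in the telescoping sum above lies in the range $\{-j, \ldots, j-1\}$ (when $k \ge 0$: $0 \le i \le k-1 \le j-1$; when $k < 0$: $-j \le k \le i \le -1$). By~\eqref{eq:same}, we have $\la^{(j)}_i = \la_i$ for all $-j \le i \le j$, so in the telescoping sum each summand is distributed as $\Exp(\la_i)$, independent of $j$.

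Combining these two steps, for $j \ge |k|$ the random variable $Y^{(j)}_k(0)$ equals in distribution the sum of independent exponentials whose rates $\la_i$ depend only on $i$, not on $j$. Hence the law of $Y^{(j)}_k(0) = X^{(j)}_k(0)$ is independent of $j$, as required. There is no real obstacle here; the entire content of the lemma lies in the careful normalization $X^{(j)}_0(0) = 0$ together with the matching of rates $\la^{(j)}_i = \la_i$ for the central block of indices, both of which were arranged in Lemma~\ref{lemma:bounds}.
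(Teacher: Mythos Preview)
Your proof is correct and follows essentially the same approach as the paper: write $Y^{(j)}_k(0)$ as a telescoping sum of gaps anchored at $Y^{(j)}_0(0)=0$, use the product-of-exponentials stationary initial distribution, and invoke~\eqref{eq:same} to conclude the relevant rates do not depend on $j$. The only cosmetic difference is that the paper assumes $k>0$ without loss of generality, while you treat both signs of $k$ explicitly.
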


\begin{proof} Fix a $j \ge 1$. Assume without loss of generality that $k > 0$. Since $Y^{(j)}_0(0) = 0$, we have:
\begin{equation}
\label{eq:sum-z-initial}
Y^{(j)}_k(0) = z^{(j)}_0 + \ldots + z^{(j)}_{k-1},\ \ n \ge 0.
\end{equation} 
Here, we consider the following independent random variables:
\begin{equation}
\label{eq:initial-gaps}
z^{(j)}_i \sim \Exp\bigl(\la_i^{(j)}\bigr),\, M_j \le i < N_j.
\end{equation}
But $\la_i^{(j)} = \la_i$ for $i = 0, \ldots, k-1$, if $j \ge k$. Therefore, the distribution of $z^{(j)}_0 + \ldots + z^{(j)}_{k-1}$ is independent of $j \ge k$, which together with~\eqref{eq:sum-z-initial} for $n := k$ proves independence of the distribution of $Y^{(j)}_k(0)$ of $j \ge |k|$. For each $j \ge 1$, the initial conditions of the system $X^{(j)}$ are ranked, that is, $X^{(j)}_n(0) = Y^{(j)}_n(0)$ for all $n \in \MZ$. In addition,  $X^{(j)}_0(0) = 0$. This completes the proof.
\end{proof}

\subsubsection{Proof of Lemma~\ref{lemma:new-tight-Y}} This is similar to the proof of Lemma~\ref{lemma:tightness-of-Y}. However, the systems $X^{(j)}$ do not start from the same initial conditions; this is their main difference from the systems $X^{(M_j, N_j)}$ from Lemma~\ref{lemma:tightness-of-Y}.  Therefore, we need to modify Lemmata~\ref{lemma:comp-0} and~\ref{lemma:set-of-names}. Fix a $k \in \MZ$.

\begin{lemma}
\label{lemma:modified-comp-0}
For every $\eta > 0$, there exist $u_{\pm} \in \BR$ such that for every $j \ge |k|$, we have:
\begin{equation}
\label{eq:new-comp-0}
\MP\left(\forall \ t \in [0, T],\ \ u_- \le Y^{(j)}_k(t) \le u_+\right) \ge 1 - \eta.
\end{equation}
\end{lemma}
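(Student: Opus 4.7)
The proof follows the scheme of Lemma~\ref{lemma:comp-0}, modified to accommodate two new features of $X^{(j)}$: its initial condition is random and varies with $j$, and its drift $g^{(j)}_n$ depends on $j$ in the outer ranks $|n|>j$ (though $g^{(j)}_n=g_n$ for $|n|\le j$). The inputs we rely on are these: by Lemma~\ref{lemma:bounds}, $|g^{(j)}_n|\le C_0$ uniformly in $j,n$ and $\la^{(j)}_n\le C_1|n|+C_2$; by the last lemma in the proof of Lemma~\ref{lemma:new-tight-X} and its proof, the joint law of any finite collection $(Y^{(j)}_n(0))_{|n|\le K}$ is independent of $j$ once $j\ge K$; and $Y^{(j)}_0(0)=0$ by construction.

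For the lower bound I would introduce an auxiliary one-sided infinite system $\ol{X}=(\ol{X}_n)_{n\ge k}$ of competing Brownian particles with constant drifts $\ol{g}_n\equiv -C_0$, unit diffusions, and a \emph{deterministic} initial condition $(\ol{x}_n)_{n\ge k}\in\mathcal W$ chosen so that, uniformly in $j\ge|k|$ and with probability at least $1-\eta/4$, the inequality $\ol{x}_n\le Y^{(j)}_n(0)$ holds for every $n\ge k$ simultaneously. Such $(\ol{x}_n)$ can be constructed using the bound $\la^{(j)}_n\le C_1|n|+C_2$: each gap $Z^{(j)}_n\sim\Exp(\la^{(j)}_n)$ stochastically dominates an $\Exp(C_1|n|+C_2)$ variable, so the partial sums $Y^{(j)}_n(0)-Y^{(j)}_k(0)$ admit a uniform-in-$j$ stochastic lower bound of order $\log(n-k+1)$; combined with the tight, $j$-independent law of $Y^{(j)}_k(0)$, a Borel--Cantelli argument produces a slowly-growing deterministic sequence $(\ol{x}_n)$, lying in $\mathcal W$, that bounds the entire random profile from below with high probability. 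Existence and uniqueness in law of $\ol{X}$ follow from \cite[Theorem 3.1]{MyOwn6}. Combining drift monotonicity with initial-condition monotonicity and the ``add particles at the top / remove particles from the bottom'' comparison (\cite[Corollary 3.9, Remark 8, Remark 9]{MyOwn2}), we couple $\ol{X}$ and $X^{(j)}$ so that $\ol{Y}_k(t)\le Y^{(j)}_k(t)$ for $t\in[0,T]$. By continuity of $\ol{Y}_k$ on $[0,T]$ we pick $u_-$ with $\MP(\min_{t\in[0,T]}\ol{Y}_k(t)\ge u_-)\ge 1-\eta/4$, yielding $\MP(\min_{t\in[0,T]}Y^{(j)}_k(t)\ge u_-)\ge 1-\eta/2$.

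The upper bound $u_+$ is obtained symmetrically, using an ``inverted'' one-sided infinite system $\tilde{X}=(\tilde{X}_n)_{n\le k}$ of competing Brownian particles (top-ranked at rank $k$, no bottom-ranked particle), with constant drifts $+C_0$, unit diffusions, and a deterministic initial profile $(\tilde{x}_n)_{n\le k}$ dominating $(Y^{(j)}_n(0))_{n\le k}$ from above with probability $1-\eta/4$, uniformly in $j$; the construction of $(\tilde{x}_n)$ is the mirror image of that of $(\ol{x}_n)$, with the roles of ``stochastic lower bound via $\log|n|$'' and ``slowly-decaying $\tilde{x}_n\to-\infty$'' swapped. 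The analogous comparison gives $Y^{(j)}_k(t)\le\tilde{Y}_k(t)$, and combining with the lower bound yields~\eqref{eq:new-comp-0}.

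The main obstacle is constructing the deterministic sandwiching profiles $(\ol{x}_n)_{n\ge k}$ and $(\tilde{x}_n)_{n\le k}$ that bracket every random initial configuration $(Y^{(j)}_n(0))$ simultaneously in $n$, uniformly in $j\ge|k|$, and with prescribed probability---this is the step where the sublinear rate bound~\eqref{eq:sublinear-new} of Lemma~\ref{lemma:bounds} is essential. The remaining ingredients---existence and uniqueness of $\ol{X}$ and $\tilde{X}$ and the requisite comparison inequalities---are direct transcriptions from \cite{MyOwn6,MyOwn2}.
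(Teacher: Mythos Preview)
Your proposal is correct and follows the same comparison scheme as the paper, but differs in one technical choice: the initial condition for the auxiliary one-sided system. You construct a \emph{deterministic} profile $(\ol{x}_n)_{n\ge k}\in\mathcal W$ that bounds $(Y^{(j)}_n(0))_{n\ge k}$ from below only with high probability, via a Borel--Cantelli argument on the partial sums of $\Exp(C_1|n|+C_2)$ variables. The paper instead takes a \emph{random} initial condition: it couples $z^{(j)}_n$ with $\tilde{z}_n\sim\Exp(C_1|n|+C_2)$ so that $\tilde{z}_n\le z^{(j)}_n$ a.s., sets $\tilde{x}_n:=Y^{(j)}_k(0)+\tilde{z}_k+\ldots+\tilde{z}_{n-1}$, and starts the auxiliary system from $(\tilde{x}_n)$. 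Because the law of $Y^{(j)}_k(0)$ is independent of $j$ (for $j\ge|k|$) and the $\tilde{z}_n$ have fixed laws, the auxiliary system has a law independent of $j$ even though its initial condition is random; Lemma~\ref{lemma:nice} verifies $(\tilde{x}_n)\in\mathcal W$ a.s. This buys an almost-sure comparison $\tilde{Y}_k(t)\le Y^{(j)}_k(t)$ rather than a high-probability one, eliminating the need to build the deterministic profile and the extra $\eta/4$ bookkeeping. Your route works but is more laborious; the paper's coupling is cleaner.
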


\begin{proof} From~\eqref{eq:sum-z-initial}, we get:
$$
Y^{(j)}_n(0) = Y^{(j)}_k(0) + z^{(j)}_k + \ldots + z^{(j)}_{n-1},\ \ n \ge k.
$$
It follows from~\eqref{eq:initial-gaps} and the estimate~\eqref{eq:sublinear-new} that we can generate independent random variables 
\begin{equation}
\label{eq:new-initial-gaps}
\tilde{z}_n \sim \Exp(C_1 + C_2|n|),\ \mbox{such that a.s.}\ \tilde{z}_n \le z_n,\, n \ge k.
\end{equation}
Define for $j \ge |k|$ and $n \ge k$:
\begin{equation}
\label{eq:adjusted-initial}
\tilde{x}_n := X^{(j)}_k(0) + \tilde{z}_k + \ldots + \tilde{z}_{n-1}.
\end{equation}
Consider a one-sided infinite system $\tilde{X} = (\tilde{X}_n)_{n \ge k}$ of competing Brownian particles with drift coefficients $\tilde{g}_n := -C_0,\, n \ge k$, where $C_0$ is taken from~\eqref{eq:g-bdd}; unit diffusion coefficients $\tilde{\si}_n = 1,\, n \ge k$; starting from $\tilde{X}_n(0) = \tilde{x}_n,\, n \ge k$. By Lemma~\ref{lemma:nice}, $(\tilde{x}_n)_{n \ge k}$ satisfies
\begin{equation}
\label{eq:946}
\SL_{n=k}^{\infty}e^{-\al\tilde{x}_n^2} < \infty\ \mbox{a.s. for all}\ \al > 0.
\end{equation}
Therefore, by \cite[Theorem 2.1]{MyOwn6} there exists in the weak sense a unique in law version of this one-sided infinite system $\tilde{X}$. Denote by $\tilde{Y} = (\tilde{Y}_n)_{n \ge k}$ the corresponding system of ranked particles, and assume it has ranked initial conditions. From~\eqref{eq:new-initial-gaps} and~\eqref{eq:adjusted-initial}, we have: 
\begin{equation}
\label{eq:shift-down-initial}
\tilde{Y}_n(0) \le Y^{(j)}_n(0),\ \ j \ge |k|,\ \ k \le n \le N_j.
\end{equation}
By comparison techniques, \cite{MyOwn2, MyOwn6}, we obtain:
$$
\tilde{Y}_n(t) \le Y^{(j)}_n(t),\ t \ge 0,\ j \ge j_k,\ k \le n \le N_j.
$$
Indeed, the system $\tilde{X}$ is obtained from $X^{(j)}$ via: (a) removing particles with ranks less than $k$ from the bottom; (b) adding (infinitely many) particles with ranks greater than $N_j$ to the top; (c) shifting down ranked initial conditions, as in~\eqref{eq:shift-down-initial}; (d) taking smaller values $\tilde{g}_n$ of drift coefficients, by~\eqref{eq:g-bdd}. The rest of the proof of Lemma~\ref{lemma:modified-comp-0} is as in Lemma~\ref{lemma:comp-0}. 
\end{proof}

\begin{lemma}  For $j \ge |k|$, define the set of names:
\label{lemma:modified-set-of-names}
$$
\mathcal J^{(j)}_k := \left\{i \in \MZ\mid \exists\, t \in [0, T]:\ \tilde{X}^{(j)}_i(t) = \tilde{Y}^{(j)}_k(t)\right\}.
$$
For every $\eta > 0$, there exist $J_-, J_+ \in \MZ$ and $J_0 \ge 0$ such that for all $j \ge J_0$, we get:
$$
\MP\left(\mathcal J^{(j)}_k \subseteq [J_-, J_+]\right) \ge 1 - 2\eta.
$$
\end{lemma}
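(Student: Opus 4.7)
The plan is to adapt the proof of Lemma~\ref{lemma:set-of-names}, splitting the probability budget $2\eta$ into two halves: one to contain the trajectory of $Y^{(j)}_k$ in a fixed interval $[u_-, u_+]$, the other to forbid ``far'' named particles from visiting that interval during $[0,T]$. For the first half, Lemma~\ref{lemma:modified-comp-0} directly provides $u_\pm \in \BR$ such that
$$
B^{(j)} := \bigl\{u_- \le Y_k^{(j)}(t) \le u_+,\ \forall\, t \in [0,T]\bigr\}
$$
has probability at least $1 - \eta$ for every $j \ge |k|$.

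For the second half, the main difficulty is that the initial conditions of $X^{(j)}$ are random and $j$-dependent. The coupling $z_n^{(j)} \ge \tilde z_n$ of~\eqref{eq:new-initial-gaps}, applied on both sides of $k$, yields random variables $\tilde x_n$ (as in~\eqref{eq:adjusted-initial}) whose distribution does not depend on $j$ and such that $X_n^{(j)}(0) \ge \tilde x_n$ for $n > k$ with an analogous upper bound for $n < k$. By Lemma~\ref{lemma:nice}, $\SL_{n}e^{-\alpha \tilde x_n^2} < \infty$ a.s., so $\tilde x_n \to \pm\infty$ as $n \to \pm\infty$; hence one can pick integers $J_\pm$ so that the event
$$
C := \bigl\{\tilde x_n > u_+ + C_0 T\ \forall\, n \ge J_+\bigr\} \cap \bigl\{\tilde x_n < u_- - C_0 T\ \forall\, n \le J_-\bigr\}
$$
has probability at least $1 - \eta/2$. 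Conditional on the initial configuration, every $X_n^{(j)}$ satisfies an It\^o SDE with drift bounded by $C_0$ in absolute value (by~\eqref{eq:g-bdd}) and unit diffusion; applying \cite[Lemma 7.1]{MyOwn6} conditionally then gives, for $n \ge J_+$,
$$
\MP\Bigl(\min_{t \in [0,T]} X_n^{(j)}(t) \le u_+ \,\Big|\, X^{(j)}(0)\Bigr) \le 2 \Psi\Bigl(\frac{X_n^{(j)}(0) - u_+ - C_0 T}{\sqrt T}\Bigr),
$$
and symmetrically for $n \le J_-$. On $C$ these bounds are dominated by the $j$-independent quantities $2\Psi\bigl((\tilde x_n - u_+ - C_0 T)/\sqrt T\bigr)$, whose sum over $n \notin [J_-, J_+]$ is a.s.\ finite by \cite[Lemma 7.2]{MyOwn6} and Lemma~\ref{lemma:nice}. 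Enlarging $|J_\pm|$ if necessary and invoking dominated convergence, one arranges
$$
\MP\Bigl(C \cap \bigcup_{n \notin [J_-, J_+]}\bigl\{X_n^{(j)}\ \text{visits}\ [u_-, u_+]\ \text{on}\ [0,T]\bigr\}\Bigr) \le \eta/2
$$
uniformly in $j \ge J_0$ for some $J_0 \ge \max(|J_-|, |J_+|, |k|)$.

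On $B^{(j)} \cap C$ minus the bad event just controlled, continuity of trajectories together with the initial ordering $X_n^{(j)}(0) > u_+$ for $n \ge J_+$ (resp.\ $X_n^{(j)}(0) < u_-$ for $n \le J_-$) forbids any such particle from meeting $Y_k^{(j)}$ during $[0,T]$; thus $\mathcal J_k^{(j)} \subseteq [J_-, J_+]$ on an event of probability at least $1 - 2\eta$. The main obstacle is Step~3, namely the uniform-in-$j$ handling of the random initial conditions: the key is to couple $X_n^{(j)}(0)$ from below (resp.\ above) by the $j$-independent sequence $(\tilde x_n)$ supplied by Lemma~\ref{lemma:nice}, so that both the choice of $J_\pm$ and the conditional tail estimates can be pushed through expectations without any residual dependence on $j$.
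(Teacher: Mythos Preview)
Your proposal is correct and follows essentially the same route as the paper: confine $Y_k^{(j)}$ to $[u_-,u_+]$ via Lemma~\ref{lemma:modified-comp-0}, then use the $j$-independent coupling $X_n^{(j)}(0)\ge \tilde x_n$ from~\eqref{eq:new-initial-gaps}--\eqref{eq:adjusted-initial} together with the Gaussian tail bound \cite[Lemma~7.1]{MyOwn6} and the summability from Lemma~\ref{lemma:nice} and \cite[Lemma~7.2]{MyOwn6} to exclude far particles from $[u_-,u_+]$. If anything, you are more explicit than the paper about conditioning on the random initial data; the only quibble is that ``dominated convergence'' should really be bounded convergence applied to $\bigl(\sum_{n\notin[J_-,J_+]}2\Psi(\cdot)\bigr)\wedge 1$, since integrability of the uncapped sum is not established.
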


\begin{proof} We use the notation from the proof of Lemma~\ref{lemma:modified-comp-0}. For $j \ge j_k$ and $M_j \le n \le N_j$, let $x^{(j)}_n := X_n^{(j)}(0)$; then we can compare:
\begin{equation}
\label{eq:comp-705}
x^{(j)}_n = z^{(j)}_k + \ldots + z^{(j)}_{n-1} \ge \tilde{z}_k + \ldots + \tilde{z}_{n-1} =: \tilde{x}_n.
\end{equation}
From~\eqref{eq:946}, we have: $\tilde{x}_n \to \infty,\  n \to \infty$. Therefore, there exists an $n_0 \in \MZ$ such that for every $n \ge n_0$, we have: $\tilde{x}_n > u_+ + \ol{g}T$. From~\eqref{eq:comp-705}, we get: $x^{(j)}_n > u_+ + \ol{g}T$. In the notation of the proof of Lemma~\ref{lemma:set-of-names}, the estimate in~\eqref{eq:estimate-for-probab-1} takes the form
\begin{equation}
\label{eq:estimate-for-probab-new}
\MP\left(A_i^{(j)}\right) \le \MP\left(\min\limits_{t \in [0, T]}\tilde{X}_i^{(j)}(t) \le u_+\right) \le 2\Psi\left(\frac{\tilde{x}_i - u_+ - \ol{g}T}{\ol{\si}\sqrt{T}}\right).
\end{equation}
From~\eqref{eq:946} and \cite[Lemma 7.2]{MyOwn6}, we get that 
\begin{equation}
\label{eq:947}
\SL_{n = n_0}^{\infty}\Psi\left(\frac{\tilde{x}_n - u_+ - \ol{g}T}{\ol{\si}\sqrt{T}}\right) < \infty.
\end{equation}
Combining~\eqref{eq:947} with~\eqref{eq:estimate-for-probab-new}, we complete the proof of Lemma~\ref{lemma:modified-set-of-names} as in the proof of  Lemma~\ref{lemma:set-of-names}. 
\end{proof}

\subsection{Proof of Lemma~\ref{lemma:comparison-of-stationary-gap-distributions}}  Take versions of systems $X^{(M_j, N_j)}$ and $X^{(M_{j'}, N_{j'})}$, starting from 
$$
X^{(M_j, N_j)}_i(0) = X^{(M_{j'}, N_{j'})}_i(0) = 0\ \mbox{for all}\ i.
$$
The system $X^{(M_j, N_j)}$ is obtained from $X^{(M_{j'}, N_{j'})}$ by removing the top $N_{j'} - N_j$ particles and the bottom $M_j - M_{j'}$ particles. By comparison techniques, see \cite[Corollary 3.10]{MyOwn2}, we have: 
\begin{equation}
\label{eq:comparison-of-gap-processes}
\left[Z^{(M_{j'}, N_{j'})}(t)\right]_I \preceq \left[Z^{(M_{j}, N_{j})}(t)\right]_I.
\end{equation}
By \cite[Proposition 2.2]{MyOwn6}, we get:
\begin{equation}
\label{eq:conv-of-two-gaps}
Z^{(M_{j'}, N_{j'})}(t) \Ra \pi^{(j)},\ \ Z^{(M_{j'}, N_{j'})}(t) \Ra \pi^{(j')},\ \ t \to \infty.
\end{equation}
Combine~\eqref{eq:comparison-of-gap-processes} and~\eqref{eq:conv-of-two-gaps},  and observe that stochastic comparison is preserved under weak limits. The rest of the proof of Lemma~\ref{lemma:comparison-of-stationary-gap-distributions} is omitted. 

\subsection{Proof of Theorem~\ref{thm:no-unit-diffusions}} (a) It suffices to prove that for every $k \in \MZ$, the family $(Z_k(t),\, t \ge 0)$ is tight in $\BR$. Take a $j \ge j_k$ and a system $X^{(M_j, N_j)}$, starting from 
$$
X_n^{(M_j, N_j)}(0) = X_n(0),\ M_j \le n \le N_j.
$$
Then the corresponding gap process $Z^{(M_j, N_j)}$ corresponds to a tight family of random variables $\left(Z^{(M_j, N_j)}(t),\, t \ge 0\right)$ in $\BR^{N_j - M_j}$, by \cite[Proposition 2.2]{MyOwn6}. Therefore, the family
\begin{equation}
\label{eq:new-tight-sequence}
\bigl(Z^{(M_j, N_j)}_k(t),\, t \ge 0\bigr)
\end{equation}
is tight in $\BR$. Now, the system $X^{(M_j, N_j)}$ can be obtained from $X$ by removing top particles (with ranks greater than $N_j$) and bottom particles (with ranks less than $M_j$). Therefore, by comparison techniques from \cite[Corollary 3.10]{MyOwn2}, for every subset $I \subseteq \{M_j, \ldots, N_j - 1\}$, we get:
\begin{equation}
\label{eq:comparison-of-gaps}
0 \le \bigl[Z(t)\bigr]_I \preceq \bigl[Z^{(M_j, N_j)}(t)\bigr]_I,\ \ t \ge 0.
\end{equation}
In particular, letting $I = \{k\}$ for a $k \in \MZ$, we get from~\eqref{eq:comparison-of-gaps}:
\begin{equation}
\label{eq:comparison-of-gaps-k}
0 \le Z_k(t) \preceq Z^{(M_j, N_j)}_k(t),\ \ t \ge 0.
\end{equation}
Combining~\eqref{eq:comparison-of-gaps-k} with tightness of the family~\eqref{eq:new-tight-sequence}, we complete the proof of Theorem~\ref{thm:no-unit-diffusions} (a). 

\smallskip

(b) Take a sequence $(t_l)_{l \ge 1}$ of positive numbers such that $t_l \uparrow \infty$. Assume $Z(t_l) \Ra \nu$ for some probability measure $\mu$ on $\BR^{\MZ}_+$. Take a finite subset $I \subseteq \MZ$. It suffices to show that 
\begin{equation}
\label{eq:comparison-of-marginals}
\bigl[\nu\bigr]_I \preceq \bigl[\pi^{(\infty)}\bigr]_I.
\end{equation}
Because $(M_j, N_j)$ is an approximative sequence, we have: $M_j \to -\infty$ and $N_j \to \infty$ as $j \to \infty$. Take a $j$ large enough so that $I \subseteq \{M_j, \ldots, N_j - 1\}$, and consider a system $X^{(M_j, N_j)}$ as in the proof of (a) above. Plugging $t := t_l$ in~\eqref{eq:comparison-of-gaps}, we have: 
\begin{equation}
\label{eq:comparison-of-gap-marginals}
[Z(t_l)]_I \preceq \left[Z^{(M_j, N_j)}(t_l)\right]_I.
\end{equation}
From \cite[Proposition 2.2]{MyOwn6}, applied to marginals corresponding to the subset $I$, we have:
\begin{equation}
\label{eq:conv-629}
\left[Z^{(M_j, N_j)}(t_l)\right]_I \Ra \left[\pi^{(j)}\right]_I,\ \ l \to \infty.
\end{equation}
Since $Z(t_l) \Ra \nu$ as $l \to \infty$, we have:
\begin{equation}
\label{eq:conv-630}
[Z(t_l)]_I \Ra [\nu]_I,\ \ l \to \infty.
\end{equation}
Compare~\eqref{eq:comparison-of-gap-marginals},~\eqref{eq:conv-629},~\eqref{eq:conv-630}, and observing that stochastic comparison is preserved under weak limits, we prove~\eqref{eq:comparison-of-marginals}. This, in turn, completes the proof of Theorem~\ref{thm:no-unit-diffusions} (b).

\subsection{Proof of Theorem~\ref{thm:conv-general}} (a) Similar to the proof of Theorem~\ref{thm:stationary}: For each $j \ge 1$, we construct a finite system of competing Brownian particles 
$$
\tilde{X}^{(j)} = \left(\tilde{X}^{(j)}_{M_j}, \ldots, \tilde{X}^{(j)}_{N_j}\right)
$$
with drift and diffusion coefficients $g_n,\, \si_n^2,\, M_j \le n \le N_j$, with ranked initial conditions, and with $\tilde{X}^{(j)}_0(0) = 0$. Without loss of generality, we assume $M_1 < 0 < N_1$. For each system $\tilde{X}^{(j)}$, denote the corresponding system of ranked particles and the gap process by
$$
\tilde{Y}^{(j)} = \left(\tilde{Y}^{(j)}_{M_j}, \ldots, \tilde{Y}^{(j)}_{N_j}\right)\ \ \mbox{and}\ \ \tilde{Z}^{(j)} = \left(\tilde{Z}^{(j)}_{M_j}, \ldots, \tilde{Z}^{(j)}_{N_j - 1}\right).
$$
We assume that the gap process is in its stationary distribution:
\begin{equation}
\label{eq:stat-gap-j}
\tilde{Z}^{(j)}(t) \sim \pi^{(j)},\ \ t \ge 0.
\end{equation}

Next, we prove as in Lemma~\ref{lemma:approx} that for every finite subset $I \subseteq \MZ$ and every $T > 0$, we have the following weak convergence in $C([0, T], \BR^{2|I|})$: 
\begin{equation}
\label{eq:conv-all}
\bigl(\bigl[\tilde{X}^{(j)}\bigr]_I,\, \bigl[\tilde{Y}^{(j)}\bigr]_I\bigr) \Ra 
([X]_I, [Y]_I),\ \ j \to \infty. 
\end{equation}
As in the proof of Theorem~\ref{thm:stationary}, we combine~\eqref{eq:stat-gap-j} with~\eqref{eq:conv-all} and complete the proof. We  need only to modify Lemmata~\ref{lemma:modified-comp-0} and~\ref{lemma:modified-set-of-names}. Fix a $k \in \MZ$.

\begin{lemma}
\label{lemma:modified-comp-new}
For every $\eta > 0$, there exist $u_{\pm} \in \BR$ such that for every $j \ge |k|$, we have:
\begin{equation}
\label{eq:new-comp-0}
\MP\left(\forall \ t \in [0, T],\ \ u_- \le \tilde{Y}^{(j)}_k(t) \le u_+\right) \ge 1 - \eta.
\end{equation}
\end{lemma}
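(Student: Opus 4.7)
The plan is to adapt the proof of Lemma~\ref{lemma:modified-comp-0}, replacing the explicit rate bound~\eqref{eq:sublinear-new} (which is unavailable without~\eqref{eq:unit-diffusions}) by the monotone coupling displayed after Lemma~\ref{lemma:comparison-of-stationary-gap-distributions}, and using the uniform bound $\ol g=\sup_n|g_n|<\infty$ from Remark~\ref{rmk:bdd-seq} in place of the unit-diffusion hypothesis. Assume $k>0$; the case $k\le 0$ is symmetric (via inverted one-sided systems). I work on a single probability space carrying the monotone coupling $z^{(\infty)}_m\le z^{(j+1)}_m\le z^{(j)}_m$ a.s.\ (for $j\ge 1$ and $M_j\le m<N_j$) together with an independent family of driving Brownian motions. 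In this coupling $\tilde Y^{(j)}_k(0)=z^{(j)}_0+\cdots+z^{(j)}_{k-1}$ is dominated by the $j$-independent a.s.\ finite random variable $z^{(1)}_0+\cdots+z^{(1)}_{k-1}$, so I pick $C>0$ with $\MP(\tilde Y^{(j)}_k(0)\le C)\ge 1-\eta/3$ for every $j\ge 1$; the bound $\tilde Y^{(j)}_k(0)\ge 0$ is automatic from $\tilde X^{(j)}_0(0)=0$ and the ranking of initial conditions.

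For the lower bound on $\min_{t\in[0,T]}\tilde Y^{(j)}_k(t)$ I introduce a one-sided infinite system $\tilde X^-=(\tilde X^-_n)_{n\ge k}$ of competing Brownian particles with drifts $\tilde g_n:=-\ol g$, diffusion coefficients $\si_n^2$, starting from $\tilde x^-_n:=\tilde Y^{(j)}_k(0)+\sum_{m=k}^{n-1}z^{(\infty)}_m$. The hypothesis of Theorem~\ref{thm:conv-general}(a) that $\pi^{(\infty)}$ is supported on $\mathcal V$, combined with the pointwise estimate $e^{-\al(s+c)^2}\le e^{\al c^2}e^{-\al s^2/2}$ (valid for $s$ large), places $\tilde x^-\in\mathcal W$ a.s., so the one-sided analogue of Theorem~\ref{thm:existence} furnishes a unique-in-law weak version of $\tilde X^-$. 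The coupling yields $\tilde x^-_n\le \tilde Y^{(j)}_n(0)$ a.s., and $-\ol g\le g_n$ by Remark~\ref{rmk:bdd-seq}, so the comparison techniques of \cite[Corollary 3.11]{MyOwn2} give $\tilde Y^-_k(t)\le \tilde Y^{(j)}_k(t)$ on $[0,T]$. The crucial shift-invariance observation is that drifts and diffusions are rank-based: translating all initial conditions of $\tilde X^-$ by $\tilde Y^{(j)}_k(0)$ translates its trajectories by the same amount, whence the displacement $\tilde Y^-_k(t)-\tilde Y^{(j)}_k(0)$ has a law determined only by $(z^{(\infty)}_m)_{m\ge k}$ (independent of $z^{(j)}_0,\ldots,z^{(j)}_{k-1}$ under the coordinate-wise coupling) and the independent driving Brownian motions, and is therefore independent of $j$. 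Picking $u_-$ with $\MP(\min_{t\in[0,T]}(\tilde Y^-_k(t)-\tilde Y^{(j)}_k(0))\ge u_-)\ge 1-\eta/3$ and using $\tilde Y^{(j)}_k(0)\ge 0$ gives $\tilde Y^{(j)}_k(t)\ge u_-$ with probability at least $1-\eta/3$.

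The upper bound follows from the mirror construction: an inverted one-sided infinite system $\hat X=(\hat X_n)_{n\le k}$ with drifts $\ol g$, diffusions $\si_n^2$, starting from $\hat x_n:=\tilde Y^{(j)}_k(0)-\sum_{m=n}^{k-1}z^{(\infty)}_m$. Again the support assumption on $\pi^{(\infty)}$ places $\hat x\in\mathcal W$ a.s., the coupling gives $\hat x_n\ge \tilde Y^{(j)}_n(0)$, and comparison delivers $\tilde Y^{(j)}_k(t)\le\hat Y_k(t)$ where $\hat Y_k$ is the top-ranked particle of $\hat X$. Shift-invariance produces a $j$-independent law for $\hat Y_k(t)-\tilde Y^{(j)}_k(0)$; I select $u'_+$ with $\MP(\max_{t\in[0,T]}(\hat Y_k(t)-\tilde Y^{(j)}_k(0))\le u'_+)\ge 1-\eta/3$ and set $u_+:=C+u'_+$. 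A union bound over the three bad events of probability at most $\eta/3$ each concludes the proof. I expect the only non-routine step to be organising the coupling carefully enough that the shift-invariance produces a genuinely $j$-independent displacement law — precisely where the support of $\pi^{(\infty)}$ on $\mathcal V$ is used to make the comparison systems well-defined with $j$-independent gap structure; once this is arranged, everything else is a direct translation of the arguments in Lemma~\ref{lemma:modified-comp-0}.
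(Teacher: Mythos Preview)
Your argument is correct, but it is more elaborate than what the paper actually does. The paper exploits two simplifications you pass over. First, in the setting of Theorem~\ref{thm:conv-general} the finite systems $\tilde X^{(j)}$ already carry the \emph{original} drifts $g_n$ (unlike the $j$-dependent drifts $g_k^{(j)}$ in Lemma~\ref{lemma:modified-comp-0}), so there is no need to pass to the constant drift $-\ol g$; the paper builds the one-sided comparison system with drifts $(g_n)_{n\ge k}$ and diffusions $(\si_n^2)_{n\ge k}$ directly. Second, the monotone coupling gives $\tilde Y^{(j)}_k(0)=z^{(j)}_0+\cdots+z^{(j)}_{k-1}\ge z^{(\infty)}_0+\cdots+z^{(\infty)}_{k-1}=:y_0$ for \emph{every} $j$, so the paper simply anchors the comparison system at the $j$-independent point $y_0$, taking $\ol X_n(0)=y_0+z^{(\infty)}_k+\cdots+z^{(\infty)}_{n-1}$. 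This makes $\ol X$ completely $j$-independent in one stroke, and the conclusion follows exactly as in Lemma~\ref{lemma:comp-0} without any shift-invariance argument or separate bound on $\tilde Y^{(j)}_k(0)$. Your route---anchoring at the $j$-dependent $\tilde Y^{(j)}_k(0)$, using translation invariance to obtain a $j$-free displacement law, and then controlling $\tilde Y^{(j)}_k(0)$ via $z^{(1)}$---reaches the same destination and has the virtue of paralleling Lemma~\ref{lemma:modified-comp-0} line by line, but it carries machinery (the constant-drift comparison and the shift trick) that the present context renders unnecessary.
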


\begin{proof} Similar to that of Lemma~\ref{lemma:modified-comp-0}. We have: 
$$
\tilde{Y}^{(j)}_k(0) = \tilde{Y}^{(j)}_0(0) + z^{(j)}_0 + \ldots + z^{(j)}_{k-1} \to  z^{(\infty)}_0 + \ldots + z^{(\infty)}_{k-1},\ j \to \infty.
$$
Therefore, there exists a $y_0 \in \BR$ such that $\tilde{Y}^{(j)}_k(0) \ge y_0$ for all $j \ge |k|$. Now, 
\begin{equation}
\tilde{Y}^{(j)}_n(0) = \tilde{Y}^{(j)}_k(0) + z^{(j)}_k + \ldots + z^{(j)}_{n-1} \ge  y_0 + z^{(j)}_k + \ldots + z^{(j)}_{n-1},\ \ n \ge k.
\label{eq:initial-sums-new}
\end{equation}
Take a one-sided infinite system $\ol{X} = (\ol{X}_n)_{n \ge k}$ of competing Brownian particles with drift coefficients $(g_n)_{n \ge k}$, diffusion coefficients $(\si_n^2)_{n \ge k}$, and initial conditions 
\begin{equation}
\label{eq:1109}
\ol{X}_n(0) := y_0 + z_k^{(\infty)} + \ldots + z^{(\infty)}_{n-1},\ \ n \ge k.
\end{equation}
This system exists in the weak sense and is unique in law, because 
\begin{equation}
\label{eq:1112}
\SL_{n=k}^{\infty}e^{-\al\left[\ol{X}_n(0)\right]^2} < \infty\ \mbox{a.s. for all}\ \al > 0.
\end{equation}
Denote by $\ol{Y} = (\ol{Y}_n)_{n \ge k}$ the corresponding system of ranked particles. 
From~\eqref{eq:initial-sums-new},~\eqref{eq:1109}, we have: 
\begin{equation}
\label{eq:1110}
\ol{X}_n(0) \le \tilde{Y}^{(j)}_n(0),\ \ j \ge |k|,\ \ n \ge k.
\end{equation}
By comparison techniques, \cite{MyOwn2, MyOwn6}, we obtain:
$$
\ol{Y}_n(t) \le \tilde{Y}^{(j)}_n(t),\ t \ge 0,\ j \ge j_k,\ k \le n \le N_j.
$$
Indeed, the system $\ol{X}$ is obtained from $X^{(j)}$ via: (a) removing particles with ranks less than $k$ from the bottom; (b) adding (infinitely many) particles with ranks greater than $N_j$ to the top; (c) shifting down ranked initial conditions, as in~\eqref{eq:1110}. The rest of the proof of Lemma~\ref{lemma:modified-comp-0} is as in Lemma~\ref{lemma:comp-0}. 
\end{proof}

\begin{lemma}  For $j \ge |k|$, define the set of names:
\label{lemma:modified-set-of-names-new}
$$
\mathcal J^{(j)}_k := \left\{i \in \MZ\mid \exists\, t \in [0, T]:\ \tilde{X}^{(j)}_i(t) = \tilde{Y}^{(j)}_k(t)\right\}.
$$
For every $\eta > 0$, there exist $J_-, J_+ \in \MZ$, and $J_0 \ge 0$ such that for all $j \ge J_0$, we get:
$$
\MP\left(\mathcal J^{(j)}_k \subseteq [J_-, J_+]\right) \ge 1 - 2\eta.
$$
\end{lemma}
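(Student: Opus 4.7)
The plan is to mirror the proofs of Lemmata~\ref{lemma:set-of-names} and~\ref{lemma:modified-set-of-names}, replacing the direct tail estimate on deterministic initial conditions by a tail estimate on the random initial conditions that is made uniform in $j$ via the coupling of Lemma~\ref{lemma:comparison-of-stationary-gap-distributions}. First, fix $\eta>0$ and apply Lemma~\ref{lemma:modified-comp-new} to produce $u_{\pm}\in\BR$ such that, uniformly in $j\ge|k|$, the event $\{\tilde Y^{(j)}_k(t)\in[u_-,u_+]\text{ for all }t\in[0,T]\}$ has probability at least $1-\eta$. On this event, any $i\in\mathcal J^{(j)}_k$ gives rise to the event $A_i^{(j)}:=\{\exists t\in[0,T]:\tilde X^{(j)}_i(t)\in[u_-,u_+]\}$, so it suffices to show that $\MP\bigl(\bigcup_{i\ge J_+}A_i^{(j)}\bigr)+\MP\bigl(\bigcup_{i\le J_-}A_i^{(j)}\bigr)\le\eta$ for suitable deterministic $J_\pm$, uniformly in $j\ge J_0$.

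Next, I exploit the coupling of Lemma~\ref{lemma:comparison-of-stationary-gap-distributions}: we may assume that the gaps satisfy $z_n^{(j)}\ge z_n^{(\infty)}$ a.s.\ for every $n\in\MZ$ and $j$ with $n\in\{M_j,\ldots,N_j-1\}$. Since $\tilde X^{(j)}_0(0)=0$, the initial positions are $\tilde X^{(j)}_n(0)=\Phi_n(z^{(j)})$, so that $\tilde X^{(j)}_n(0)\ge\Phi_n(z^{(\infty)})$ for $n\ge 1$, and $\tilde X^{(j)}_n(0)\le\Phi_n(z^{(\infty)})$ for $n\le -1$. Writing $\xi_n:=\Phi_n(z^{(\infty)})$, the hypothesis of Theorem~\ref{thm:conv-general}(a) that $\pi^{(\infty)}$ is supported on $\mathcal V$ is exactly the statement $\xi=(\xi_n)_{n\in\MZ}\in\mathcal W$ a.s.\ (by~\eqref{eq:V-W}), which yields $\xi_n\to\infty$ as $n\to\infty$, $\xi_n\to-\infty$ as $n\to-\infty$, and $\sum_{n\in\MZ}e^{-\al\xi_n^2}<\infty$ a.s.\ for all $\al>0$.

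Now, writing the SDE for $\tilde X^{(j)}_i$ as in~\eqref{eq:X-n-expr}--\eqref{eq:bdd-coefficients} and applying \cite[Lemma~7.1]{MyOwn6} conditionally on the initial configuration, I obtain, for $i$ such that $\xi_i>u_++\ol g T$,
$$
\MP(A_i^{(j)}\mid\mathcal F_0)\le 2\Psi\!\left(\tfrac{\tilde X^{(j)}_i(0)-u_+-\ol gT}{\ol\si\sqrt T}\right)\le 2\Psi\!\left(\tfrac{\xi_i-u_+-\ol gT}{\ol\si\sqrt T}\right),
$$
using monotonicity of $\Psi$; the symmetric bound holds for $i\le -1$ large in absolute value. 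Combined with \cite[Lemma~7.2]{MyOwn6}, the a.s.\ convergence $\sum_{n\in\MZ}e^{-\al\xi_n^2}<\infty$ gives $\sum_{n\in\MZ}\Psi\bigl(\tfrac{|\xi_n|-|u_\pm|-\ol gT}{\ol\si\sqrt T}\bigr)<\infty$ a.s.; hence its tails tend to zero a.s., and by the dominated convergence theorem also in $L^1$ after truncating on a high-probability event.

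The only obstacle worth mentioning is the passage from this a.s.\ summability in the random variable $\xi$ to a \emph{deterministic} choice of $J_\pm$. My plan is to handle this exactly as in step 6 of Lemma~\ref{lemma:modified-set-of-names}: pick a high-probability event $B=\{\xi_n>u_++\ol gT\text{ for all }n\ge J_+\text{ and }\xi_n<u_--\ol gT\text{ for all }n\le J_-\}$ with $\MP(B^c)<\eta/4$ (possible for $J_\pm$ large by $\xi\in\mathcal W$ a.s.), then enlarge $J_\pm$ further so that $\EE\bigl[\mathbf 1_B\sum_{|n|\ge J_\pm}2\Psi(\cdots)\bigr]<\eta/4$, which is possible by dominated convergence since the tails vanish a.s.\ on $B$ and are bounded by the integrable sum. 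Taking any $J_0$ large enough so that $[J_-,J_+]\subseteq[M_j,N_j]$ for $j\ge J_0$ and putting together the four contributions gives $\MP(\mathcal J^{(j)}_k\not\subseteq[J_-,J_+])\le\eta+\eta/4+\eta/4+\eta/4<2\eta$, as required.
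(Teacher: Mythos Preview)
Your argument is correct and follows the same route as the paper. The paper's proof is a one-liner: it says to repeat the proof of Lemma~\ref{lemma:modified-set-of-names}, replacing the lower-bounding sequence $(\tilde x_n)$ there by the sequence $\ol X_n(0)=y_0+z_k^{(\infty)}+\ldots+z_{n-1}^{(\infty)}$ from~\eqref{eq:1109}, which satisfies the summability condition~\eqref{eq:1112} precisely because $\pi^{(\infty)}$ is assumed supported on $\mathcal V$. Your sequence $\xi_n=\Phi_n(z^{(\infty)})$ differs from the paper's $\ol X_n(0)$ only by an additive constant, so the two bounds are equivalent; and you are more explicit than the paper about passing from the almost-sure tail summability of the random $\xi$ to a deterministic choice of $J_\pm$ via a high-probability event and dominated convergence, a point the paper leaves implicit both here and in Lemma~\ref{lemma:modified-set-of-names}.
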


\begin{proof} Similar to the proof of Lemma~\ref{lemma:modified-set-of-names}, except that the role of $\ol{x} = (\ol{x}_n)_{n \ge k}$ is played by~\eqref{eq:1109}, which satisfies~\eqref{eq:1112}.   
\end{proof}

\smallskip

(b) Take another copy $\ol{X}$ of the two-sided infinite system $X$ of competing Brownian particles, with the same drift coefficients $g_n$ and diffusion coefficients $\si_n^2$, but starting from a different initial condition: 
$$
\ol{Z}(t) \sim \pi^{(\infty)},\ \ \mbox{for every}\ \ t \ge 0,
$$
where $\ol{Z}$ is the corresponding gap process. Then $\ol{Z}(0) \preceq Z(0)$. By Lemma~\ref{lemma:comp} (b), 
\begin{equation}
\label{eq:comp-644}
\ol{Z}(t) \preceq Z(t),\  \mbox{for every}\ t \ge 0.
\end{equation}
By Theorem~\ref{thm:no-unit-diffusions} (a), the family $(Z(t),\, t \ge 0)$, is tight in $\BR^{\infty}_+$. Take a weak limit point $\nu$: assume $t_l \uparrow \infty$ is a sequence of positive numbers, and $Z(t_l) \Ra \nu$. Substitute $t := t_l$ into~\eqref{eq:comp-644}, and take weak limits as $l \to \infty$. Since weak convergence preserves stochastic comparison, $\pi^{(\infty)} \preceq \nu$. On the other hand, by Theorem~\ref{thm:no-unit-diffusions} (b) $\nu \preceq \pi^{(\infty)}$. Thus, $\nu = \pi^{(\infty)}$. We proved that the family $(Z(t),\, t \ge 0)$ is tight, and any weak limit point as $t \to \infty$ is equal to $\pi^{(\infty)}$. This completes the proof of part (b). 

\subsection{Proof of Lemma~\ref{lemma:rate-limits}} First, let us show that the sequence $(\la^{(j)}_k)$ is nondecreasing. For $\si_n \equiv 1$, we can use the notation from subsection 2.3. Because 
$$
z_k^{(j+1)} \sim \Exp(\la_k^{(j+1)}) \le z_k^{(j)} \sim \Exp(\la_k^{(j)}),\ \ j \ge j_k,\ \ k \in \MZ.
$$
we get: $\la_k^{(j)} \le \la_k^{(j+1)}$. Next, from~\eqref{eq:difference-mu} applied to the current system, we get:
\begin{equation}
\label{eq:lambda-diff}
\la^{(j)}_{k+1} - \la^{(j)}_k = \ol{g}^{(j)} + g^{(j)}_k.
\end{equation}
Combining~\eqref{eq:g-bdd},~\eqref{eq:average-bdd},~\eqref{eq:lambda-diff}, we get:
$$
\sup\limits_{k, j}\bigl|\la^{(j)}_{k+1} - \la^{(j)}_k\bigr| < \infty.
$$
Therefore, as $j \to \infty$, either both limits $\la_k^{(\infty)} = \lim\la_k^{(j)}$ and $\la_{k+1}^{(\infty)} = \lim\la_{k+1}^{(j)}$ are finite, or both are infinite. This completes the proof of Lemma~\ref{lemma:rate-limits}. 

\subsection{Proof of Theorem~\ref{thm:conv}} (a) By Remark~\ref{rmk:BVP}, it suffices to show that
\begin{equation}
\label{eq:goal}
\frac12\la^{(\infty)}_{k-1} - \la^{(\infty)}_k + \frac12\la^{(\infty)}_{k+1} = g_{k+1} - g_k,\ \ k \in \MZ.
\end{equation}
Applying~\eqref{eq:difference} from the Appendix to the system $X^{(j)}$, we get:
\begin{equation}
\label{eq:have}
\frac12\la^{(j)}_{k-1} - \la^{(j)}_k + \frac12\la^{(j)}_{k+1} = g_{k+1} - g_k,\ \ M_j + 1 \le k \le N_j - 2,\ \ j \ge 1.
\end{equation}
Combining~\eqref{eq:limits},~\eqref{eq:have}, we get~\eqref{eq:goal}. Apply Theorem~\ref{thm:stationary} to finish the proof of Theorem~\ref{thm:conv} (a).

\medskip

(b, c) Immediately follow from Theorems~\ref{thm:no-unit-diffusions} and~\ref{thm:conv-general}. 

\subsection{Proof of Theorem~\ref{thm:conv-0}} We have: $\pi^{(\infty)} = \de_{\mathbf{0}}$. Every weak limit point $\nu$ of $Z(t)$ as $t \to \infty$ is stochastically dominated by $\de_{\mathbf{0}}$. Since $\nu$ is  supported on $\BR^{\infty}_+$,  it is equal to $\de_{\mathbf{0}}$. Therefore, every weak limit point $\nu$ of the family $(Z(t),\, t \ge 0)$, as $t \to \infty$, is equal to $\de_{\mathbf{0}}$. Combining this with tightness of $(Z(t),\, t \ge 0)$ in $\BR^{\MZ}_+$ from Theorem~\ref{thm:no-unit-diffusions} (a), we complete the proof.

\section{Appendix}

\begin{lemma}
\label{lemma:nice} 
Fix $c_1, c_2 > 0$, $k \in \MZ$. Consider a sequence $z := (z_n)_{n \ge k}$ of independent random variables $z_n \sim \Exp(c_1 + c_2|n|)$.  Fix an $x_k \in \BR$ and define the sequence $(x_n)_{n \ge k}$ as follows: 
$$
x_n := x_k + z_k + \ldots + z_{n-1},\ \ n \ge k.
$$
Then a.s. for every $\al > 0$ we have: 
$$
\SL_{n=k}^{\infty}e^{-\al x_n^2} < \infty.
$$ 
\end{lemma}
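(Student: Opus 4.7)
\textbf{Proof plan for Lemma~\ref{lemma:nice}.} The idea is that the deterministic part of $x_n$ grows like $c_2^{-1}\log n$, while the random fluctuations stay bounded a.s., so $e^{-\alpha x_n^2}$ decays faster than any polynomial.

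First, normalize: for $n \ge k$ set $y_n := (c_1 + c_2|n|) z_n$, so that $(y_n)_{n \ge k}$ is an i.i.d.\ $\Exp(1)$ sequence. Write
\[
x_n = x_k + \SL_{j=k}^{n-1} \frac{1}{c_1 + c_2|j|} + \SL_{j=k}^{n-1} \frac{y_j - 1}{c_1 + c_2|j|} =: x_k + m_n + S_n.
\]
The deterministic term satisfies $m_n \sim c_2^{-1}\log n$ as $n \to \infty$, because $\sum_{j=1}^n (c_1+c_2 j)^{-1}$ is comparable to $c_2^{-1}\log n$. The random term $S_n$ is a sum of independent, mean-zero random variables $\xi_j := (y_j-1)/(c_1+c_2|j|)$ with $\Var(\xi_j) = (c_1+c_2|j|)^{-2}$, so $\sum_{j \ge k}\Var(\xi_j) < \infty$. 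By Kolmogorov's convergence theorem, $S_n$ converges a.s.\ to a finite limit $S_\infty$ as $n \to \infty$. Hence on a set of full probability,
\[
x_n = x_k + m_n + S_n \ \longrightarrow \ +\infty, \quad n \to \infty,
\]
and in particular $x_n \ge m_n/2$ for all sufficiently large $n$ (depending on $\omega$).

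Combining this with $m_n \ge (2c_2)^{-1}\log n$ for $n$ large, there exists $n_0(\omega) < \infty$ a.s.\ such that $x_n \ge (4c_2)^{-1}\log n$ for all $n \ge n_0(\omega)$. Therefore, for any $\al > 0$,
\[
\SL_{n \ge n_0(\oa)} e^{-\al x_n^2} \;\le\; \SL_{n \ge n_0(\oa)} \exp\!\Bigl(-\frac{\al (\log n)^2}{16 c_2^2}\Bigr) \;=\; \SL_{n \ge n_0(\oa)} n^{-\al (\log n)/(16 c_2^2)} \;<\; \infty,
\]
and only finitely many terms remain outside the sum, completing the proof.

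The only delicate step is verifying that $S_n$ converges a.s.; this is immediate from Kolmogorov's theorem once one observes the summability $\sum (c_1+c_2|j|)^{-2} < \infty$. The rest is elementary asymptotics of the harmonic-type sum $m_n$ together with a single Borel-Cantelli--free passage to the limit. No obstacle of substance is expected.
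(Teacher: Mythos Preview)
Your proof is correct and follows essentially the same approach as the paper's: the paper verifies that $\sum_{n}\lambda_n^{-2}<\infty$ and $\Lambda_n:=\sum_{j=k}^{n}\lambda_j^{-1}\sim c_2^{-1}\log n$ satisfies $\sum_n e^{-\alpha\Lambda_n^2}<\infty$, then invokes \cite[Lemma~4.5]{MyOwn6}, whose proof is precisely your mean-plus-fluctuation decomposition with Kolmogorov's theorem applied to the centered sum. You have simply unpacked that cited lemma and given a self-contained argument; the ingredients are identical.
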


\begin{proof} Let $\la_n := c_1 + c_2|n|,\, n \ge k$. Then $\sum_{n \ge 1}\la_n^{-2} < \infty$, and the numbers 
$$
\La_n := \SL_{j=k}^n\la_j^{-1} \sim c_2^{-1}\log n,\, n \to \infty,
$$
satisfy $\sum_{n = k}^{\infty}e^{-\al\La_n^2} < \infty$ for all $\al > 0$. Apply \cite[Lemma 4.5]{MyOwn6}  and complete the proof.
\end{proof}

\begin{lemma}
\label{lemma:cut}
Take a finite, one- or two-sided infinite system $X = (X_n)_{M \le n \le N}$,
with drift coefficients $g_n$ and diffusion coefficients $\si_n^2$, $M \le n \le N$. Here, $M$ and/or $N$ can be infinite. Let $Y = (Y_n)$ be the corresponding system of ranked particles. Take some integers $p, q$ such that $M \le p \le q \le N$. Assume that on some time interval $I \subseteq \BR_+$, we have: 
$$
Y_{p-1}(t) < Y_p(t),\ \ Y_q(t) < Y_{q+1}(t),\ \ t \in I.
$$
Then $(Y_p, \ldots, Y_q)$ behaves as a ranked system of competing Brownian particles with drift coefficients $g_n,\, p \le n \le q$, and diffusion coefficients $\si_n^2,\, p \le n \le q$, on this time interval $I$.
\end{lemma}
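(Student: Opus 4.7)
The plan is to invoke the ranked-particle SDE from Lemma~\ref{lemma:ranked} and exploit the strict boundary gap conditions on $I$ to freeze the two offending local-time terms. By Lemma~\ref{lemma:ranked} (which, through Remark~\ref{rmk:ranked-general}, applies uniformly to finite, one-sided, and two-sided infinite systems), for every integer $k$ with $p \le k \le q$,
$$
Y_k(t) = Y_k(0) + g_k t + \si_k B_k(t) + \tfrac12 L_{(k-1,k)}(t) - \tfrac12 L_{(k,k+1)}(t),
$$
where $B_p, \ldots, B_q$ are i.i.d. Brownian motions and $L_{(k,k+1)}$ denotes the semimartingale local time at zero of the gap $Y_{k+1} - Y_k$ (with the understanding from Remark~\ref{rmk:ranked-general} when $p$ or $q$ equals the extreme rank of the ambient system).

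The key observation is that a semimartingale local time only charges the zero set of its generating process. Since $Y_p - Y_{p-1} > 0$ throughout $I$, the local time $L_{(p-1,p)}$ is constant on $I$; symmetrically, $L_{(q,q+1)}$ is constant on $I$. Fixing any $t_0 \in I$ and subtracting the SDEs evaluated at $t$ and $t_0$ yields, for $p \le k \le q$ and $t \in I$ with $t \ge t_0$,
$$
Y_k(t) - Y_k(t_0) = g_k(t - t_0) + \si_k\bigl(B_k(t) - B_k(t_0)\bigr) + \tfrac12\bigl[L_{(k-1,k)}(t) - L_{(k-1,k)}(t_0)\bigr] - \tfrac12\bigl[L_{(k,k+1)}(t) - L_{(k,k+1)}(t_0)\bigr],
$$
with the convention that the local-time increments at $k-1 = p-1$ and $k+1 = q+1$ vanish identically. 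Since each interior local time $L_{(k,k+1)}$, $p \le k < q$, is intrinsically determined by the continuous semimartingale $Y_{k+1} - Y_k$, the cut-out system satisfies exactly the Skorokhod-type SDEs for the ranked vector of a finite system of $q - p + 1$ competing Brownian particles with drifts $g_p, \ldots, g_q$, diffusions $\si_p^2, \ldots, \si_q^2$, and driving Brownian motions $B_p, \ldots, B_q$; the ordering $Y_p \le \cdots \le Y_q$ is preserved automatically. Combined with uniqueness in law for finite systems from \cite{BassPardoux}, this identifies the law of $(Y_p, \ldots, Y_q)$ on $I$.

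The main obstacle, which I would handle by a standard localization, is that $I$ may be random. I would introduce stopping times $\tau_- := \inf\{t \ge 0 \mid t \in I\}$ and $\tau_+ := \inf\{t \ge \tau_- \mid Y_p(t) = Y_{p-1}(t)\ \text{or}\ Y_q(t) = Y_{q+1}(t)\}$, run the pathwise argument above on the stochastic interval $[\tau_-, \tau_+]$, and invoke the strong Markov property to recognize the shifted processes $B_k(\cdot + \tau_-) - B_k(\tau_-)$ as Brownian motions in the time-shifted filtration. Modulo this localization, the whole lemma reduces to the pathwise fact that boundary gaps freeze boundary local times, combined with the SDE of Lemma~\ref{lemma:ranked}.
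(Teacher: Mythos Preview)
Your proposal is correct and follows essentially the same approach as the paper: freeze the boundary local times $L_{(p-1,p)}$ and $L_{(q,q+1)}$ on $I$ using the strict gap conditions, then recognize the remaining ranked SDE as that of a finite system of $q-p+1$ competing Brownian particles. The paper's proof is terser---it writes the differential form of~\eqref{eq:ranked}, notes~\eqref{eq:d-0}, and cites \cite[Proposition~2.2]{MyOwn6} (which packages the Skorokhod characterization of the ranked system) rather than \cite{BassPardoux}; your additional localization step for random $I$ is a reasonable elaboration the paper leaves implicit.
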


\begin{proof} Let $L_{(n, n+1)}$ be the local time of collision between particles $Y_n$ and $Y_{n+1}$. Then $L_{(p-1, p)}$ and $L_{(q, q+1)}$ are constant on $I$. In other words, 
\begin{equation}
\label{eq:d-0}
\md L_{(p-1, p)}(t) = \md L_{(q, q+1)}(t) \equiv 0\ \mbox{on}\ I.
\end{equation}
Recalling Remark~\ref{rmk:ranked-general}, we can rewrite~\eqref{eq:ranked} as 
\begin{equation}
\label{eq:d-Y}
\md Y_n(t) = g_n\,\md t + \si_n\,\md B_n(t) + \frac12\md L_{(n-1, n)} - \frac12\md L_{(n, n+1)}(t),\ p \le n \le q
\end{equation}
Here, $B_n,\, p \le n \le q$, are i.i.d. Brownian motions. Combining~\eqref{eq:d-0} with~\eqref{eq:d-Y}, and using \cite[Propostition 2.2]{MyOwn6}, we complete the proof. 
\end{proof}

Fix an $N < \infty$, and define the wedge
$$
\mathcal W_N := \{y \in \BR^N\mid y_1 \le \ldots \le y_N\}.
$$

\begin{lemma}
\label{lemma:Feller} Take an $N < \infty$. Fix drift and diffusion coefficients $g_k,\, \si_k^2,\, k = 1, \ldots, N$. For every $y \in \mathcal W_N$, denote by $Y^{(y)}$ a process in $\BR^N$ which is the ranked system of $N$ competing Brownian particles with given drift and diffusion coefficients, starting from $Y^{(y)}_n(0) = y_n,\, 1 \le n \le N$. As $x \to y$ in $\mathcal W_N$, we have: $Y^{(x)} \Ra Y^{(y)}$  in $C([0, T], \BR^N)$ for every $T > 0$. 
\end{lemma}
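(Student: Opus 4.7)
\noindent\textbf{Proof plan for Lemma~\ref{lemma:Feller}.} The natural approach is to produce pathwise (indeed uniformly Lipschitz) dependence of $Y^{(y)}$ on $y$ via the Skorokhod reflection representation of ranked competing Brownian particles, from which a.s.\ uniform convergence -- and hence weak convergence -- follows immediately. I would work on a single probability space carrying i.i.d.\ standard Brownian motions $\beta_1,\ldots,\beta_N$, and for every $y\in\mathcal W_N$ construct $Y^{(y)}$ as the unique solution to the Skorokhod problem associated with~\eqref{eq:ranked} (recall Remark~\ref{rmk:ranked-general} with $L_{(0,1)}\equiv L_{(N,N+1)}\equiv 0$):
$$
Y^{(y)}_k(t) = y_k + g_kt + \si_k\be_k(t) + \tfrac12 L^{(y)}_{(k-1,k)}(t) - \tfrac12 L^{(y)}_{(k,k+1)}(t),\quad 1\le k\le N.
$$
Weak existence and uniqueness in law for finite systems (Bass--Pardoux) together with the usual transfer principle justify realizing all $Y^{(y)}$ via these common $\be_k$.

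The key step is to pass to the gap process $Z^{(y)}_k := Y^{(y)}_{k+1}-Y^{(y)}_k$, $1\le k\le N-1$, which is a semimartingale reflecting Brownian motion in the orthant $\BR_+^{N-1}$. A direct computation from the displayed SDE shows the reflection matrix $R$ is tridiagonal with $R_{kk}=1$ and $R_{k,k\pm 1}=-1/2$; this matrix is strictly row-diagonally dominant with positive diagonal, hence a $\mathcal P$-matrix (equivalently, completely-$\mathcal S$). The classical Harrison--Reiman theorem then guarantees that the Skorokhod reflection map $\Gamma\colon C([0,T],\BR^{N-1})\to C([0,T],\BR^{N-1})$, as well as the associated local-time map, is Lipschitz continuous in the uniform norm. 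Since the driving input
$$
\xi^{(y)}_k(t) := (y_{k+1}-y_k) + (g_{k+1}-g_k)t + \si_{k+1}\be_{k+1}(t)-\si_k\be_k(t)
$$
is linear -- and therefore continuous -- in $y$, applying $\Gamma$ yields, for some constant $C$ depending only on $R$,
$$
\sup_{t\in[0,T]} \bigl|Z^{(x)}(t)-Z^{(y)}(t)\bigr| \le C\,\norm{x-y}\quad\text{a.s.}
$$
Analogously, Lipschitz dependence of the local times $L^{(y)}_{(k,k+1)}$ on $\xi^{(y)}$ transfers, via $Y^{(y)}_1(t)=y_1+g_1t+\si_1\be_1(t)-\tfrac12 L^{(y)}_{(1,2)}(t)$ and the telescoping identity $Y^{(y)}_k = Y^{(y)}_1 + Z^{(y)}_1+\ldots+Z^{(y)}_{k-1}$, to a Lipschitz bound $\sup_{[0,T]}|Y^{(x)}-Y^{(y)}|\le C'\norm{x-y}$ a.s.

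Letting $x\to y$ therefore gives $Y^{(x)}\to Y^{(y)}$ almost surely in $C([0,T],\BR^N)$, and a.s.\ convergence implies weak convergence, completing the proof. The main obstacle is the invocation of the Lipschitz property of the Skorokhod map: it hinges on verifying that our specific reflection matrix $R$ satisfies the Harrison--Reiman condition. Here this is immediate from strict diagonal dominance, but absent that reference one would either reprove it by the fixed-point argument of Dupuis--Ishii, or pursue a weak-convergence alternative (tightness of $\{Y^{(x)}\}$ from bounded drifts/diffusions and local-time bounds, followed by identifying the limit via the martingale problem and Bass--Pardoux uniqueness in law); both routes reach the same conclusion but the Skorokhod route is shortest.
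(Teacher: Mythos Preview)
Your approach is correct and takes a genuinely different route from the paper. The paper's proof is a one-line citation: it identifies $Y^{(y)}$ as a semimartingale reflecting Brownian motion in the convex polyhedron $\mathcal W_N$ and invokes the Feller property for SRBM in convex polyhedra established by Dai and Williams. You instead give a constructive pathwise argument, passing to the gap process and using Harrison--Reiman Lipschitz continuity of the orthant Skorokhod map, which yields the stronger conclusion of almost-sure (indeed Lipschitz) convergence on a common probability space. Your route is more elementary and quantitative; the paper's citation is shorter but imports more machinery than is strictly needed here.

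One small correction: the tridiagonal reflection matrix $R$ for the gap process is \emph{not} strictly row-diagonally dominant on interior rows --- for $1<k<N-1$ one has $\sum_{j\ne k}|R_{kj}|=\tfrac12+\tfrac12=1=R_{kk}$. The Harrison--Reiman hypothesis (that $Q=I-R\ge 0$ has spectral radius strictly less than $1$) nevertheless holds, since $Q$ is the $(N-1)\times(N-1)$ tridiagonal matrix with zeros on the diagonal and $1/2$ on the sub- and super-diagonals, whose eigenvalues are $\cos(j\pi/N)$, $j=1,\ldots,N-1$, all of absolute value strictly below $1$. So the Lipschitz property of the reflection map is available, just not for the reason you state; replace the diagonal-dominance sentence with this spectral computation and the argument goes through.
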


\begin{proof} The system $Y^{(y)}$ is actually an SRBM (semimartingale reflected Brownian motion) in the wedge $\mathcal W_N$, with drift vector $(g_1, \ldots, g_N)$, and covariance matrix $\diag(\si_1^2, \ldots, \si_N^2)$, starting from $y$, see \cite{Williams95}. The statement then follows from the Feller property of SRBM in convex polyhedra from this cited article \cite{Williams95}.
\end{proof}

\begin{rmk} Let us return to a finite system of $N$ competing Brownian particles with drift coefficients $g_1, \ldots, g_N$ and unit diffusion coefficients. Under the assumption~\eqref{eq:stability-intro}, the stationary gap distribution has the product-of-exponentials form given in~\eqref{eq:product-form}. 
Note that the sequence of numbers $\mu_k,\, k = 1, \ldots, N-1$, satisfy the following finite difference equation boundary value problem:
$$
\frac12\mu_{k-1} - \mu_k + \frac12\mu_{k+1} = g_{k+1} - g_k,\ k = 1, \ldots, N-1,
$$
with the following boundary conditions: $\mu_0 = \mu_N = 0$. The solution to this boundary value problem is unique. Moreover, we can represent
\begin{equation}
\label{eq:difference-mu}
\mu_k - \mu_l = 2\left(g_{l+1} + \ldots + g_k\right) - 2(k-l)\ol{g}_N,\, 1 \le l < k < N.
\end{equation}
\label{rmk:BVP}
\end{rmk}

\section*{Acknowledgements}


This research was partially supported by  NSF grants DMS 1007563, DMS 1308340, DMS 1405210, and DMS 1409434.

\medskip\noindent

\end{document}